\newtheorem{theorem}{Theorem}[section]
\newtheorem{lemma}[theorem]{Lemma}
\newtheorem{remark}[theorem]{Remark}
\theoremstyle{definition}
\def\d{{\mathrm d}}
\def\R{{\mathbb R}}
\def\D{{\mathbb D}}
\def\ud{\underline{D}}
\def\md{\partial_{t}^\bullet}
\def\Gm{{\Gamma^m}}
\def\Ghm{{\Gamma^m_h}}
\def\GhM{{\Gamma^{m+1}_h}}
\def\Ghsm{{\hat\Gamma^m_{h, *}}}
\def\GhsM{{\hat\Gamma^{m+1}_{h, *}}}
\def\Ghsf{{\Gamma_{h,\rm f}^0}}
\def\Ghso{{\hat\Gamma_{h,*}^0}}
\def\Hm{{H^m}}
\def\nm{{n^m}}
\def\nhm{{n^m_h}}
\def\nsm{{n^m_{*}}}
\def\nsM{{n^{m+1}_{*}}}
\def\Tsm{{T^m_{*}}}
\def\TsM{{T^{m+1}_{*}}}
\def\Nsm{{N^m_{*}}}
\def\NsM{{N^{m+1}_{*}}}
\def\mum{{\mu_m}}
\def\musm{{\mu_{*,m}}}
\def\num{{\nu_m}}
\def\nusm{{\nu_{*,m}}}
\def\uhm{{u_h^m}}
\def\khm{{\kappa_h^m}}
\def\phm{{p_h^m}}
\def\Xhm{{X_h^m}}
\def\XhM{{X_h^{m+1}}}
\def\eum{{e_u^m}}
\def\epm{{e_p^m}}
\def\ekm{{e_\kappa^m}}
\def\exm{{e_x^m}}
\def\exM{{e_x^{m+1}}}
\def\ehxm{{\hat e_x^m}}
\def\ehxM{{\hat e_x^{m+1}}}
\def\Om{{\Omega^{m}}}
\def\OhM{{\Omega_{h}^{m+1}}}
\def\Ohm{{\Omega_{h}^{m}}}
\def\Ohmpm{{\Omega_{h,\pm}^{m}}}
\def\Ohso{{\hat\Omega_{h,*}^{0}}}
\def\Ohsop{{\hat\Omega_{h,*,+}^{0}}}
\def\Ohsom{{\hat\Omega_{h,*,-}^{0}}}
\def\Ohsf{{\Omega_{h,{\rm f}}^{0}}}
\def\Ohsfp{{\Omega_{h,{\rm f},+}^{0}}}
\def\Ohsfm{{\Omega_{h,{\rm f},-}^{0}}}
\def\Ohsfpm{{\Omega_{h,{\rm f},\pm}^{0}}}
\def\Ohsm{{\hat\Omega_{h,*}^{m}}}
\def\OhsM{{\hat\Omega_{h,*}^{m+1}}}
\numberwithin{equation}{section}
\begin{document}

%------
% Insert the title of your paper and (if necessary)
% a short title for the running head.
%------
\title{A convergent finite element method for two-phase Stokes flow driven by surface tension}
\titlemark{A convergent FEM for two-phase Stokes flow}

%------

%%%% Pls fill in all fields for each author
%%%% Label the authors by their position in the authors' list using {}
%%%% If you published any math paper ever, you have an MR Author ID.
%  Please look it up in three easy (and free) steps:
% 1. copy the bibliographic data of any published paper (co-)authored by you in the search field at 
% https://mathscinet.ams.org/mathscinet/freetools/mref
% 2. Hit your name in the search result
% 3. Find your MR Author ID in the first row, copy it in the \mrid{} field
%
%%%% Finding your zbMATH Open ID is even simpler at 
% https://zbmath.org/authors/
% copy it in the \zblid{} field
%%%% If you have not created your ORCID yet, you may like to do it now, pls copy it in the field \orcid{}
%%%% Abbreviate first names for the running head

\emsauthor{1}{
	\givenname{Genming}
	\surname{Bai}
	\mrid{}
	\zblid{}
	\orcid{}}{G.~Bai}
%%%% Repeat the same fields for each numbered author
\emsauthor{2}{
	\givenname{Harald}
	\surname{Garcke}
	\mrid{}
	\zblid{}
	\orcid{}}{H.~Garcke}
\emsauthor{3}{
	\givenname{Shravan}
	\surname{Veerapaneni}
	\mrid{}
	\zblid{}
	\orcid{}}{S.~Veerapaneni}

%%%% Please provide detailed address info for each author
%%%% Use the same numbering as for \emsauthor above
%%%% Please look up the ROR ID of your institute here: https://ror.org
\Emsaffil{1}{
	\department{Department of Mathematics and Statistics}
	\organisation{Old Dominion University}
	\rorid{01a2bcd34}
	\address{4700 Elkhorn Avenue}
	\zip{VA 23508}
	\city{Norfolk}
	\country{USA}
	\affemail{gbai@odu.edu}
	
	\department{2}{Department of Mathematics} 
	\organisation{2}{University of Michigan}%
	\rorid{2}{}
	\address{2}{530 Church Street}%
	\zip{2}{MI 48109}
	\city{2}{Ann Arbor}
	\country{2}{USA} 
	\affemail{2}{}}
%\Emsaffil{11}{
%	\department{Department of Mathematics}
%	\organisation{University of Michigan}
%	\rorid{00jmfr291}
%	\address{530 Church Street}
%	\zip{MI 48109}
%	\city{Ann Arbor}
%	\country{USA}
%	\affemail{gbai@umich.edu}}
%%%% Repeat the same fields for each numbered author
%%%% If some author has multiple affiliations, repeat the fields for each affiliation
%%%% Number the affiliations using {}
\Emsaffil{2}{
	\department{Department of Mathematics}
	\organisation{University of Regensburg}
	\rorid{00f54p054}
	\address{Universit\"atsstra\ss e 31}
	\zip{93053}
	\city{Regensburg}
	\country{Germany}
	\affemail{harald.garcke@ur.de}}
\Emsaffil{3}{
	\department{Department of Mathematics}
	\organisation{University of Michigan}
	\rorid{00jmfr291}
	\address{530 Church Street}
	\zip{MI 48109}
	\city{Ann Arbor}
	\country{USA}
	\affemail{shravan@umich.edu}}

%------
% Add MSC 2020 codes according to https://zbmath.org/classification/.
% A unique primary MSC code (in curly brackets) is mandatory,
% while secondary MSC codes (in square brackets) are optional.
%------
\classification[65M12]{65M60}

%------
% Add a list of keywords.
%------
\keywords{Curvature driven flow, interface problem, Stokes flow, iso-parametric finite element method, stability, convergence}

\communicated{Endre S\"uli}

\makeatletter
\ems@print@communicatedtrue
\makeatother

%------
% Insert your abstract.
%------
\begin{abstract}
%\ldots
We present the first convergence proof for an iso-parametric finite element discretization of two-phase Stokes flow in $\Omega \subset \mathbb{R}^d$, $d=2,3$, with interface dynamics governed by mean curvature.
The proof relies on a crucial discrete coupled parabolicity structure of the error system and a powerful iso-parametric framework of convergence analysis where we do not strictly discriminate consistency and stability. This new mixing idea leads to a non-trivial construction of the bulk mesh in the consistency analysis.
The techniques and analysis developed in this paper provide fundamental numerical analysis tools for general curvature-driven free boundary problems.
\end{abstract}

\maketitle

%------
% INSERT THE BODY OF THE PAPER HERE (except
% acknowledgments, funding info and bibliography)
%------

\section{Introduction}

%The curvature-
Surface tension-driven interface problems describe the evolution of an interface (or free boundary) between two distinct regions or phases, where the motion of the interface is governed by its curvature. Such problems commonly arise in mathematical and physical models of phase transitions, grain growth, and materials science. Due to their practical significance, curvature-driven interface problems have attracted substantial interest from both the partial differential equations (PDE) and numerical analysis communities over the years. Notable examples include the Mullins–Sekerka free boundary problem, the Stefan problem, and curvature-driven two-phase flows; for a comprehensive overview, see \cite{Garcke13}.

In this paper, we study the flow of viscous, incompressible, immiscible two-fluid systems in a low Reynolds number regime, where inertia can be neglected.
Let $\Omega$ be a closed bounded domain in $\R^d$, $d=2,3$, containing one single closed and smooth $(d-1)$-dimensional interface denoted by $\Gamma(t)$ which evolves according to Newton's second law, balancing bulk stress forces and interfacial tension. We define two open sets $\Omega_+(t)$ and $\Omega_-(t)$ to be the exterior and interior components of $\Omega\backslash(\partial\Omega\cup\Gamma(t))$, and define the union open set $\Omega_\pm(t):=\Omega_+(t)\cup \Omega_-(t)=\Omega\backslash(\partial\Omega\cup\Gamma(t))$.
The governing equations for the velocity $u$ and pressure $p$ are described by the stationary Stokes equations:
\begin{subequations}
	\begin{align}
		-\nu \Delta u + \nabla p &= f
		\quad\mbox{in }\Omega_\pm(t)
		,
		\label{subeq:PDE1}\\
		\nabla\cdot u &= 0 
		\quad\mbox{in }\Omega_\pm(t)
		,
		\label{subeq:PDE2}
	\end{align}
	where the positive viscosity coefficient $\nu$ is piecewise constant, $f$ is a possible external force (e.g., gravitational force), and the first equation can be equivalently written as
	\begin{align}
		\nabla\cdot \sigma + f = 0
		\notag
	\end{align}
	with the viscous shear stress tensor
	$\sigma = - p I + 2\nu \D u := - p I + \nu (\nabla u + (\nabla u)^\top)$.
	Across the interface, we enforce the continuity of the velocity:
	\begin{align}\label{subeq:PDE3}
		[\![u]\!] = u|_{\Gamma^+} - u|_{\Gamma^-} = 0 
		\quad\mbox{on }\Gamma(t)
		.
	\end{align}
	Furthermore, the force balance at the interface gives rise to the second jump condition:
	\begin{align}\label{subeq:PDE4}
		[\![\sigma n]\!] = \gamma_0 H n
		\quad\mbox{on }\Gamma(t)
		,
	\end{align}
	where $n$ denotes the outward unit normal vector, $H$ is the mean curvature, and $\gamma_0 > 0$ represents the surface tension coefficient.
	Lastly, the evolution of the interface is governed by the velocity equation
	\begin{align}\label{subeq:PDE5}
		\md X = u \quad\mbox{in }\Omega,
	\end{align}
	where $\md$ denotes the material derivative of the position $X$ along the velocity field $u$.
\end{subequations}

The local and global well-posedness theories for the system \eqref{subeq:PDE1}--\eqref{subeq:PDE5} are well developed and have been studied by many authors, including Beale \cite{Beale81,Beale84}, Lynn \& Sylvester \cite{LS90}, Tani \& Tanaka \cite{TK96}, and Abels \cite{Abels07}. In contrast, to date, there are no known provable convergent numerical schemes for \eqref{subeq:PDE1}--\eqref{subeq:PDE5}, despite the system’s significance in practical applications.

The significance of this paper lies in the construction of the first
%-ever 
convergent finite element scheme of arbitrarily high order and the establishment of a comprehensive numerical analysis framework for the (iso-)parametric discretization where the non-local interface curvature dynamics dominates.

We note that, despite the absence of convergence proofs, unconditionally stable schemes for curvature-driven interface models have been successfully developed in many previous works, including \cite{DLY22,BGN15a,BGN15b,BGN16,Bansch01,NST16,GTZ2026}.
For readers interested in scientific computing approaches, we refer to \cite{VGZB09,VRBZ11,XBS13,ST18} for fast spectral methods based on potential theory; \cite{FZ23,OQS22,GNZ23} for cut-FEMs; \cite{ZWT10,Fu20,GNZ24} for arbitrary Lagrangian–Eulerian (ALE) methods; and \cite{GRR06,OK2005,OKZ2007} for level-set methods.

In \cite{LQ25}, the authors consider an arbitrary Lagrangian–Eulerian interface tracking method for two-phase Navier–Stokes flow without surface tension.
In this case,  no curvature terms need to be dealt with.
%In \cite{LQ25}, the authors consider a simplified scenario in which interface dynamics is neglected. In this case, there is no coupling between the bulk and the interface. 
They establish the $H^1$-optimal convergence for the semi-discrete iso-parametric finite element method, provided that the finite element degree satisfies $k\geq2$.
We also mention that, in the recent work \cite{EKL24}, a convergence proof was established for a bulk-boundary finite element method for a tumor growth model \cite{EKS19}. It is important to point out, however, that the free-boundary model problem considered in \cite{EKL24} possesses a favorable local parabolicity structure, allowing the application of the standard (iso-)parametric finite element techniques as developed in \cite{DDE05,DE13,KLL17,KLL19,KLL-Willmore,ER21}. In contrast, for the curvature-driven interface problem \eqref{subeq:PDE1}--\eqref{subeq:PDE5}, the associated differential operator is non-local and pseudo-differential (cf. \cite{Garcke13}), resulting in a much more complicated stability structure.

The main contribution of this paper is to develop the discrete numerical stability and convergence 
%(optimal for $d=2$ and sup-optimal for $d=3$; see Theorem \ref{thm:main}) 
of the system \eqref{subeq:PDE1}--\eqref{subeq:PDE5} for both cases $d=2,3$. Without loss of generality, we may assume the external force $f=0$ and the surface tension coefficient $\gamma_0=1$.
\begin{enumerate}
	\item[{\footnotesize$\bullet$}]
	The most important analytic feature of the system \eqref{subeq:PDE1}--\eqref{subeq:PDE5} is the following energy identity which is obtained by testing \eqref{subeq:PDE1} with $u$ and applying integration by parts:
	\begin{align}
%		\hspace{30pt}
		\int_\Omega 2\nu \D_\Omega u \cdot \D_\Omega u 
		+
		\int_{\Gamma(t)} Hn \cdot u
		=
		0
		.
		\notag
	\end{align}
	Moreover, by further integration by parts, invoking Lemma \ref{lemma:ud}, and using the identity $\nabla_{\Gamma(t)} {\rm id}_{\Gamma(t)}=I-n\otimes n$, the second term can be rewritten as
	\begin{align}
%		\hspace{40pt}
		\hspace{-10pt}
		\int_{\Gamma(t)} Hn \cdot u
		=
		\int_{\Gamma(t)} \nabla_{\Gamma(t)}\cdot u
		=
		\frac{\d}{\d t}
		\int_{\Gamma(t)} 1
		=
		\frac{1}{d-1}
		\frac{\d}{\d t}
		\int_{\Gamma(t)} \nabla_{\Gamma(t)} {\rm id}_{\Gamma(t)}
		\cdot
		\nabla_{\Gamma(t)} {\rm id}_{\Gamma(t)}
		.
		\notag
	\end{align}
	Combining these two identities yields the key energy identity:
	\begin{align}\label{eq:energy-decay}
%		\hspace{40pt}
		\int_\Omega 2\nu \D_\Omega u \cdot \D_\Omega u 
		+
		\frac{1}{d-1}
		\frac{\d}{\d t}
		\int_{\Gamma(t)} \nabla_{\Gamma(t)} {\rm id}_{\Gamma(t)}
		\cdot
		\nabla_{\Gamma(t)} {\rm id}_{\Gamma(t)}
		=
		0
		.
	\end{align}
	Inspired by \eqref{eq:energy-decay}, one of the main objectives of this paper is to establish the following formal discrete energy decay estimate (see Section \ref{sec:parab}):
	\begin{align}\label{eq:energy-decay-disc}
%		\hspace{40pt}
		\int_\Omega 2\nu \D_\Omega e_u \cdot \D_\Omega e_u 
		+
		\int_{\Gamma(t)} \nabla_{\Gamma(t)} \Big(\frac{\partial}{\partial t} e_x \Big)
		\cdot
		\nabla_{\Gamma(t)}
		e_x
		\leq
		{\rm consistency\,\,errors}
		,
	\end{align}
	where $e_u$ and $e_x$ denote the formal errors of velocity and position in the bulk regions. We refer to \eqref{eq:energy-decay-disc} as the discrete coupled parabolicity, as its left-hand side provides positive definiteness in both the bulk and interface regions.
	This coupling reflects the nonlocal nature of the system.
	
	\item[{\footnotesize$\bullet$}]
	However, the discrete coupled parabolicity \eqref{eq:energy-decay-disc} is not sufficient to conclude the convergence.
	It is well-known that the bulk and surface discrepancy errors are of order $\nabla_\Omega e_x$ and $\nabla_\Gamma e_x |_\Gamma$. Unfortunately, these quantities are not stable and cannot be directly controlled by the left-hand side of \eqref{eq:energy-decay-disc}.
	Previous analyses \cite{BL24,BL24FOCM,BGV26} indicate the interface discrepancy error $\nabla_\Gamma e_x |_\Gamma$ can be resolved using the projection error with the help of super-convergence results, but how to handle the bulk discrepancy error $\nabla_\Omega e_x$ is still unclear.
	
	To address this issue, another major contribution of this paper is the recovery of stability for the bulk discrepancy error $\nabla_\Omega e_x$ through a non-trivial construction of the consistency mesh $\Ohsm$ (see Section \ref{sec:bulk-mesh}). The construction of $\Ohsm$ depends explicitly on the numerical mesh $\Ohm$ (cf. \eqref{eq:Ohsm-def}). In this approach, we intertwine the notions of consistency and stability: While we sacrifice the stability of consistency errors, we gain significantly stronger control over the bulk discrepancy error $\nabla_\Omega e_x$ (see Section \ref{sec:Eh}). It is the subtle balance of this trade-off that enables us to close the stability loop and establish convergence.
\end{enumerate}
Meanwhile, several new techniques are introduced to facilitate the main numerical stability arguments described above.
\begin{enumerate}
	\item[{\footnotesize$\bullet$}]
	The construction of the discrete trace extension operator $E_h$ in Section \ref{sec:Eh} enables us to smoothly extend the interface information into the bulk domain. In view of the super-approximation results of the interface quantities (cf. Appendix \ref{sec:super}), the employment of $E_h$ is expected to lead to improved error estimates in the bulk regions.
	The construction of $E_h$ (cf. \eqref{eq:Eh}) is based on stable interpolations for rough data on iso-parametric triangulations, such as the Scott--Zhang interpolation and Cl\'ement-type interpolation; see Remark \ref{rmk:SZ}.
	
	\item[{\footnotesize$\bullet$}]
	In Section \ref{sec:Lenoir}, we introduce a Lenoir-type lift operator $\Phi^m : \Ohsm \rightarrow \Omega$ tailored for the two-phase flow setting. As discussed above, since the consistency bulk mesh $\Ohsm$ is constructed based on the numerical bulk mesh $\Ohm$, the Lenoir-type lift operator $\Phi^m$ is not purely a consistency quantity. Its approximation properties depend on the quality of the bulk mesh triangulation and are not guaranteed to be uniformly bounded a priori. Therefore, the operator $\Phi^m$ must also be interpreted as a part of the overall stability.
	
	\item[{\footnotesize$\bullet$}]
	A new approximation property of the Lenoir-type lift operator $\Phi^m$ is established in Lemma \ref{lemma:Phi-approx}, where we obtain an additional $h^{1/2}$ order of approximation compared to Lenoir's original result \cite[Lemma 5]{Lenoir86}. This improvement is achieved through the size relations for pointwise evaluations and (semi-)norms, together with a scaling argument. As a result, we gain an extra factor of $h^{1/2}$ in the bulk perturbation estimate (see Lemma \ref{lemma:geo-perturb-b}). This additional $h^{1/2}$ is crucial for guaranteeing the overall numerical stability and convergence of the method.
	
	\item[{\footnotesize$\bullet$}]
	Unlike the continuous PDE or spatially semi-discrete discretizations, where the divergence-free condition is naturally satisfied, in the fully discrete scheme, the discrepancy between the domains $\Ohsm$ and $\Ohm$ generally results in the velocity error not meeting the weak discrete divergence-free condition,
	\begin{align}\label{eq:div-free}
%		\hspace{40pt}
		\int_\Ohsm \nabla_\Ohsm \cdot e_u^m \psi_h \neq 0,
	\end{align}
	where $\psi_h$ is an arbitrary finite element test function.
	This can lead to an unresolvable entanglement between $u$ and $p$ in the error equation -- in this case, the standard inf-sup argument no longer applies. 
	To address this, we develop a new $H^{-1/2}$ estimate for the pressure error $\epm$ in Section \ref{sec:epm}. Combined with the mapping properties of the discrete trace extension operator $E_h$, the $H^{-1/2}$ estimate ensures the stability of the bilinear form \eqref{eq:div-free} in the error analysis.
	%	To address this, we enforce the exact discrete divergence-free condition in the definition of $e_u^m$ (see \eqref{eq:err-def}) by introducing a novel Ritz-type projection $R_h$ (see \eqref{eqs:Rh}) in Section \ref{sec:Rh}. To be more precise, the second term on the right-hand side of \eqref{eq:Rhp} imposes the exact discrete divergence-free condition \eqref{eq:Rhp-div}. As a trade-off, $R_h$ is solution-dependent, and is not a pure consistency concept, unlike most of the definitions in previous literature. The stability of this solution-dependent Ritz projection $R_h$ is guaranteed by the bulk discrepancy estimate {\b given in} Lemma \ref{lemma:e-blinear-b}.
	
	\item[{\footnotesize$\bullet$}]
	In Lemma \ref{lemma:e-blinear}, we establish a new bilinear error estimate for the mass bilinear form, whose upper bound does not involve any gradients due to the intrinsic orthogonality of the projection error $\ehxm$. This result represents a significant improvement over previously known results (cf. \cite{Kov18,KLL19}). The absence of gradient terms in the upper bound is crucial for ensuring the stability of the numerical errors.
	
	\item[{\footnotesize$\bullet$}]
	Time-marching estimates developed in Section \ref{sec:marching} provide important a priori information at the next time level.
	Lemma \ref{lemma:e-diff-W1inf} helps us avoid the cubic nonlinearity of errors which cannot be controlled by the induction hypothesis and Gr\"onwall's inequality.
	
	\item[{\footnotesize$\bullet$}]
	Section \ref{sec:traj-est} presents a comprehensive discussion of the convergence of trajectory errors. This analysis demonstrates that the solution-dependent consistency concepts introduced in Section \ref{sec:cons_anal} are indeed compatible with the traditional framework, thereby validating the effectiveness of our novel approach of intertwining consistency with stability.
	
	\item[{\footnotesize$\bullet$}]
	A new nonlinear super-approximation estimate is established in Lemma \ref{lemma:super_conv-nonlinear} which is particularly useful for handling errors arising from function compositions in Lemma \ref{lemma:hatX-W1inf} and \eqref{eq:e-sharp-i}.
\end{enumerate}

The rest of this paper is organized as follows. Basic notation (e.g., function spaces, (iso-)parametric finite element spaces and their corresponding Lagrange interpolations) are introduced in Section \ref{sec:notations}. We then state the numerical scheme and the main result in Section \ref{sec:main-rerults}. Section \ref{sec:cons_anal} handles consistency errors, including the construction of the discrete extension operator $E_h$, the bulk consistency mesh $\Ohsm$ and the Lenoir-type lift operator $\Phi^m$,
%and the solution-dependent Ritz projection $R_h$, 
all of which are crucial components of the overall proof.
The main stability argument is carried out in Section \ref{sec:stab-anal}, where the discrete coupled parabolicity serves as the dominant stability structure.
Trajectory estimates and the a priori boundedness of the shape regularity constants are discussed in Section \ref{sec:shape-reg}.
The proof of the main convergence theorem (Theorem \ref{thm:main}) is completed in Section \ref{sec:conv-err}, drawing upon the preceding analyses.
Several well-known results concerning calculus on moving domains and super-approximation estimates are collected in Appendices \ref{sec:notation} through \ref{sec:super}. Finally, detailed proofs of three technical results, namely, Eq.~\eqref{eq:uhm-epm}, Lemma~\ref{lemma:NT_stab_ref} and Lemma~\ref{lemma:e-convert}, are recorded in Appendices \ref{sec:H3/2}, \ref{sec:appndix_tan_stab} and \ref{sec:e-convert}, respectively.

\section{Basic notation}
\label{sec:notations}

\subsection{Continuous weak formulation}

%Given any open Lipschitz domain $D$, 
Let $D$ be either a Lipschitz domain or a closed Lipschitz hypersurface. For $s\geq0$ and $q\geq 1$, we denote by $W^{s,q}(D)$ the fractional-order Sobolev–Slobodeckij spaces; see \cite[Sections 2.3 and 2.4]{SS10book} and \cite[Chapter 1]{LM2012book} for detailed definitions. If $D$ is a Lipschitz domain, we denote $W_0^{s,q}(D):= \overline{C_c^\infty(D)}^{\| \cdot \|_{W^{s,q}(D)}}$; if $D$ is a closed Lipschitz hypersurface, we set $W_0^{s,q}(D):= W^{s,q}(D)$. Moreover, we introduce the dual spaces $W^{-s,q}(D) := (W_0^{s, q'}(D))^*$, and $W^{-s,q}_0(D) := (W^{s, q'}(D))^*$, where $1/q + 1/q' = 1$. For convenience, we also write $H^s(D):=W^{s,2}(D)$ for all $s\in\R$.
Note that $L^2(\Omega)= L^2(\Omega_\pm(t))$ but $H^1(\Omega)\neq H^1(\Omega_\pm(t))$ since $\Gamma(t)$ is a crack in $\Omega$.
We define the following subspaces encoding boundary conditions:
\begin{align}
	L_0^2(\Omega) &= 
	\{ f\in L^2(\Omega): \int_\Omega f = 0 \} ,
	\notag\\
	H_0^1(\Omega) &= 
	\{ f\in H^1(\Omega): f|_{\partial\Omega} = 0 \} .
	\notag
\end{align}
Then, the weak formulation \eqref{subeq:PDE1}--\eqref{subeq:PDE5} with parameters $(\gamma_0, f) = (1, 0)$ reads as follows:
For almost every $t\in(0,T)$, find $(u(\cdot, t), p(\cdot, t), X(\cdot, t), \kappa(\cdot, t))\in (H_0^1(\Omega)^d\times L_0^2(\Omega)\times H^1(\Omega)^d\times L^2(\Gamma(t))^d)$ such that
%For all test functions $(\phi,\psi,\eta)\in H_0^1(\Omega)^d\times L_0^2(\Omega)^d\times L^2(\Gamma(t))^d$
\begin{subequations}\label{subeq:2phase}
	\begin{align}
		\int_{\Omega} 2\nu \D_\Omega u \cdot \D_\Omega \phi 
		&=
		\int_\Omega p \nabla_\Omega \cdot \phi
		-
		\int_{\Gamma(t)} \kappa \cdot \phi
		\quad\forall  \phi\in H_0^1(\Omega)^d
		\label{eq:2phase1} , \\
		\int_\Omega \nabla_\Omega \cdot u \psi &= 0  
		\quad\forall  \psi\in L_0^2(\Omega)
		, \label{eq:2phase2}
		\\
		\partial_t X &= u \circ X \quad\mbox{in $\Omega$} , \label{eq:2phase3}
		\\
		\int_{\Gamma(t)} \nabla_{\Gamma(t)} {\rm id} \cdot \nabla_{\Gamma(t)} \eta  
		&=
		\int_{\Gamma(t)} \kappa \cdot \eta
		\quad\forall  \eta\in H^1(\Gamma(t))^d
		, \label{eq:2phase4}
	\end{align}
\end{subequations}
where we denote the parameterized flow map along $u$ by $X(t):\Omega\rightarrow \Omega$, which is the solution of the velocity equation
\begin{align}
	%	\label{eq:evol}
	\partial_t X(\cdot, t) = u(X(\cdot, t), t)  \mbox{ in } \Omega \mbox{ for a.e. } t \in  [0,T],
	\notag
\end{align}
satisfying the initial condition $X(x, 0) = x$ for all $x\in \Omega$.
The evolution of the interface is described by $\Gamma(t) = \{X(x, t):x\in \Gamma(0)\}$.
From \eqref{eq:2phase4}, we observe that, in fact, $\kappa = -\Delta_{\Gamma(t)} {\rm id} = Hn$, where the second equality is a basic identity in differential geometry; cf. \cite[Proposition 11.11]{QM2013book}.

\subsection{(Iso-)parametric finite element spaces and their identifications}\label{sec:FEM}

To achieve high-order interface and boundary approximation, we employ the iso-parametric elements of Lenoir's type (cf. \cite{Lenoir86}). At $t=0$, we start from an initial triangulation $\mathcal T_{h,\rm f}^0$ of $\Omega$ which consists of shape-regular and quasi-uniform $d$-dimensional simplices with flat $(d-1)$-dimensional facets and maximal mesh size $h$.
We denote by $\Omega_{h,{\rm f}}^0$ the underlying closed domain of $\mathcal T_{h,\rm f}^0$, whose boundary $\partial\Omega_{h,{\rm f}}^0$ is piecewise flat.
We assume $\mathcal T_{h,\rm f}^0$ is fitted to the initial interface $\Gamma^0:=\Gamma(0)$ in a way that there is a canonical piecewise flat discrete interface $\Ghsf$ which is the union of some $(d-1)$-dimensional facets of $\mathcal T_{h,\rm f}^0$ and whose vertices are all on $\Gamma^0$. 
$\Ghsf$ naturally induces a discrete domain decomposition $\Ohsf = \Ohsfp \cup \Ghsf \cup \Ohsfm$ into three closed sets satisfying $ \Ohsfp \cap \Ohsfm = \Ghsf$.
% Similarly, we use the notation $\Ohsfpm :=\Ohsfp\cup \Ohsfm$.

Now consider a general (iso-)parametric triangulation $\mathcal T$ of a closed bulk domain/interface $D(\mathcal T)$ where we require that each piece $K\in \mathcal T$ can be parametrized by an $\R^d$-valued polynomial of degree 
%$\leq 
less than or equal to $k$, i.e., $\mathbb P^k(\hat K)^d \ni F_K:\hat K\rightarrow K$, where $\hat K$ is the $d$ or $(d-1)$-dimensional reference element.
We denote the set of nodes of $\mathcal T$ by $\mathcal N(\mathcal T)$.
By pulling back via $F_K$, we can canonically define the (iso-)parametric finite element function space on $\mathcal T$:
\begin{align}\label{eq:ShT}
	S_h^k(\mathcal T)
	:=
	\{
	f\in H^1(D(\mathcal T)): f|_K\circ F_K\in \mathbb P^k(\hat K)\quad\forall K\in\mathcal T
	\}.
\end{align}
Given a triangulation $\mathcal T$, we use $W_h^{s,q}(\mathcal T)$ to denote the piecewise Sobolev spaces.
%This defines an iso-parametric finite element space.
The curved triangulation $\mathcal T$ satisfying the local parametrization requirement above is not difficult to construct. For example, any $W^{1,\infty}$ homeomorphism $f_h\in S_h^k(\mathcal T_{h,\rm f}^0)^d$ determines a canonical (iso-)parametric triangulation in the following way:
\begin{align}
	\cup_{K\in \mathcal T_{h,\rm f}^0} \{(f_h|_{K})(K)\} .
	\notag
\end{align}
\begin{remark}\upshape
	If $\mathcal T$ is the $(d-1)$-dimensional curved interface triangulation satisfying the local parametrization requirement above, then $S_h^k(\mathcal T)$ coincides with the standard parametric finite element function spaces defined in \cite{DDE05,DE13,BGN20}.
\end{remark}
We say that two order-$k$ (iso-)parametric triangulations $\mathcal T_1$ and $\mathcal T_2$ are equivalent if $|\mathcal T_1| = | \mathcal T_2 |$ and there exists $f_h\in S_h^k(\mathcal T_1)^d$ such that for any $K_2\in\mathcal T_2$ there is a unique $K_1\in\mathcal T_1$ such that $(f_h|_{K_1})(K_1) = K_2$. Equivalently, $\mathcal T_2$ is parametrized by $f_h$ on $\mathcal T_1$.
It is easy to see that if $\mathcal T_1$ and $\mathcal T_2$ are related by $f_h$, then we have a one-to-one map between $S_h^k(\mathcal T_1)$ and $S_h^k(\mathcal T_2)$ via composing $f_h$. In other words, we are free to identify $g_h\in S_h^k(\mathcal T_1)$ as $g_h \in S_h^k(\mathcal T_2)$ up to a composition of the parametrization map $f_h$. 
One can also think of it as the canonical identification of finite element functions by the nodal vector: For any given $g_h\in S_h^k(\mathcal T_1)$, we use $\mathbf g$ to denote its nodal vector. Since $\mathcal T_1$ and $\mathcal T_2$ are equivalent, there is a canonical substantiation of $\mathbf g$ to $g_h\in S_h^k(\mathcal T_2)$.
In the rest of this article, we will use this identification extensively among equivalent triangulations.
In addition, we will simply write the domain in place of triangulation when the underlying triangulation can be easily read off from the context. For instance, since the triangulations of $\Ohsfp$ and $\Ghsf$ are clear, we can simply write $S_h^k(\Ohsfp)$ and $S_h^k(\Ghsf)$ instead of $S_h^k(\mathcal T(\Ohsfp))$ and $S_h^k(\mathcal T(\Ghsf))$ without any ambiguity.

We use the notation $S_h^k(\Ohsfpm)$ for the space of finite element functions which are not necessarily continuous across the discrete interface.
Note that $S_h^k(\Ohsfpm) \neq S_h^k(\Omega_{h,\rm f}^0)$. The broken space $S_h^k(\Ohsfpm)$ serves as the approximation space for pressure. 
%For the ease of notation, we use $S_{h,\rm d}^k(\Ohsfpm)$

\subsection{Lagrange interpolations}

Since the finite element space $S_h^k(\mathcal T)$ in \eqref{eq:ShT} is a local definition, one can define the Lagrange interpolation $I_h(\mathcal T): C^0(\mathcal T)\rightarrow S_h^k(\mathcal T)$ via the pullback to the reference element $\hat K$. This process can also be thought of as nodal vector identification. One can view $I_h(\mathcal T)$ as a covariant functor on the category of equivalent triangulations, and it satisfies the following commutative diagram if $\mathcal T_2$ is parametrized by $f_h$ on $\mathcal T_1$:
\[ \begin{tikzcd}
	C^0(\mathcal T_1) \arrow{r}{(f_h^{-1})^*} \arrow[swap]{d}{I_h(\mathcal T_1)} & C^0(\mathcal T_2) \arrow{d}{I_h(\mathcal T_2)} \\%
	S_h^k(\mathcal T_1) \arrow{r}{(f_h^{-1})^*}& S_h^k(\mathcal T_2)
\end{tikzcd}
\]
Similarly, we will omit the triangulation variable and simply write $I_h$ when both the underlying domain and the corresponding triangulation are clear from the context.

From Lenoir's approach (\!\!\cite[Section 3]{Lenoir86}), one can consistently deform each element in $\mathcal T_{h,\rm f}^0$ to construct an equivalent $k$th-order iso-parametric triangulation $ \hat {\mathcal T}_{h,*}^0:={\rm Lenoir}(\mathcal T_{h,\rm f}^0)$. 
Its discrete interface, exterior and interior regions are denoted by $\Ghso$, $\Ohsop$ and $\Ohsom$ respectively.
By Lenoir's construction, the initial curved interface $\Ghso$ satisfies the interpolation relation: 
$\hat X_{h,*}^0  = I_h (a|_\Ghsf)$
where $\hat X_{h,*}^0: \Ghsf\rightarrow\Ghso$ is the polynomial parametrization map of $\Ghso$.

The approximation properties of $I_h(\mathcal T)$ are closely related to the shape regularity constants.
Suppose that we have an (iso-)parametric triangulation $\mathcal T$ parametrized by $F(\mathcal T)\in S_h^k(\mathcal{T}_0)^d$ over some canonical reference triangulation $\mathcal{T}_0$. Typically, $\mathcal{T}_0$ is chosen as the flat triangulation $\Omega_{h,{\rm f}}^0$ for bulk regions, and as $\Gamma_{h,{\rm f}}^0$ for interfaces. 
The associated shape regularity quantities are defined as
\begin{align}\label{P}
	\begin{aligned}
		\omega(\mathcal T) 
		&:= \| F(\mathcal T) \|_{W^{k-1,4}_h(D(\mathcal T_0))} + 
		\| F(\mathcal T) \|_{W^{k-2,\infty}_h(D(\mathcal T_0))} 
		%		\\
		%		&\quad
		%		+ 
		%		\| F(\mathcal T) \|_{W^{k-1,1/4}_h(D(\mathcal T_0))}
		+ \| F(\mathcal T)^{-1}\|_{W^{1,\infty}(D(\mathcal T))}
		,\\
		\omega_*(\mathcal T) 
		&:= \| F(\mathcal T) \|_{H_h^{k}(D(\mathcal T_0))} 
		.
	\end{aligned}
\end{align}
%\begin{align}\label{P}
%	\begin{aligned}
	%		\kappa(\mathcal T)
	%		:=&\max_{K\in \mathcal T} (
	%		\| F_K \|_{H^{k-1}_h(\hat K)} 
	%		+ 
	%		\| F_K \|_{W^{1,\infty}(\hat K)}
	%		+ 
	%		\| (F_K)^{-1} \|_{W^{1,\infty}(K)}
	%		)  
	%		,\\
	%		\kappa_{*}(\mathcal T) 
	%		:=
	%		&\max_{K\in \mathcal T} 
	%		\| F_K \|_{H^k_h(\hat K)} 
	%		.
	%	\end{aligned}
%\end{align}
The following standard Lagrange interpolation error estimates hold due to the local approximation property on the reference triangulation $\mathcal T_0$ and the chain rule for differentiation; cf. \cite[Eqs. (3.2)--(3.5)]{BL24FOCM}.
\begin{lemma}\label{lemma:Ih}
	For any function $f\in W_h^{k+1,\infty}(D(\mathcal T)) \cap C^0(D(\mathcal T))$ where $D(\mathcal T)$ is the underlying domain of the (iso-)parametric triangulation $\mathcal T$ with mesh size $O(h)$, we have
	\begin{align}
		&\| (1-I_h) f \|_{L^{2}(D(\mathcal T))}
		+
		h
		\| \nabla_{D(\mathcal T)} (1-I_h) f \|_{L^{2}(D(\mathcal T))}
		\notag\\
		&\quad
		\leq
		C_{\omega(\mathcal T)} (1 + \omega_*(\mathcal T))
		h^{k+1}\| f \|_{W_h^{k+1,\infty}(D(\mathcal T))} .
		\notag
	\end{align}
\end{lemma}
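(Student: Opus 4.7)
The plan is to proceed element-by-element in the standard pull-back/push-forward style, with the only subtle point being the extraction of the linear dependence on $\kappa_*(\mathcal T)$. Fix $K\in\mathcal T$ with polynomial parametrization $F_K\in\mathbb P^k(\hat K)^d$ and set $\hat f := f|_K\circ F_K$. The commutative diagram in the preceding subsection gives $\widehat{I_h f|_K}=\hat I_h \hat f$, where $\hat I_h:C^0(\hat K)\to\mathbb P^k(\hat K)$ is the reference Lagrange interpolant. Standard Bramble--Hilbert on the fixed reference element yields
\begin{equation*}
  \|\hat f - \hat I_h\hat f\|_{L^\infty(\hat K)}+\|\nabla(\hat f-\hat I_h\hat f)\|_{L^\infty(\hat K)} \leq C_k\,|\hat f|_{W^{k+1,\infty}(\hat K)}.
\end{equation*}

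The main step is to bound $|\hat f|_{W^{k+1,\infty}(\hat K)}$ by Faà di Bruno's chain rule. Formally,
\begin{equation*}
  \partial^{k+1}(f\circ F_K)\;=\;\sum_{j=1}^{k+1}\sum_{\substack{\mu_1,\ldots,\mu_j\geq 1\\ \mu_1+\cdots+\mu_j = k+1}} c_{j,\mu}\,(\partial^j f)(F_K)\,\prod_{l=1}^j \partial^{\mu_l}F_K.
\end{equation*}
Since $F_K$ is a polynomial of degree at most $k$, $\partial^{k+1}F_K\equiv 0$, so every $\mu_l\leq k$. The only partitions of $k+1$ with a factor of order $k$ force $j=2$ and the remaining part to be $1$, giving summands of the form $(\partial^2 f)(\partial^k F_K)(\partial F_K)$; all remaining terms involve only $\partial^\mu F_K$ with $\mu\leq k-1$. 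Using norm equivalence on the finite-dimensional polynomial space $\mathbb P^k(\hat K)^d$, I absorb $\|\partial^k F_K\|_{L^\infty(\hat K)}\lesssim_k \|F_K\|_{H^k(\hat K)}\leq \kappa_*(\mathcal T)$, and likewise $\|\partial^\mu F_K\|_{L^\infty(\hat K)}\lesssim_k \|F_K\|_{H^{k-1}(\hat K)}\leq\kappa(\mathcal T)$ for $2\leq\mu\leq k-1$; the first-order factors are controlled directly by $\|F_K\|_{W^{1,\infty}(\hat K)}\leq\kappa(\mathcal T)$. Putting these together,
\begin{equation*}
  |\hat f|_{W^{k+1,\infty}(\hat K)}\leq C_{k,d,\kappa(\mathcal T)}\bigl(1+\kappa_*(\mathcal T)\bigr)\|f\|_{W^{k+1,\infty}(K)}.
\end{equation*}

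To finish, I would push forward and sum. Using $|K|\sim h^d$, $\|\det\nabla F_K\|_{L^\infty(\hat K)}\leq \kappa(\mathcal T)^d$, and $\|\nabla F_K^{-1}\|_{L^\infty(K)}\leq\kappa(\mathcal T)$, the local change of variables combined with the reference-element estimate gives
\begin{equation*}
  \|f-I_h f\|_{L^2(K)}^2+h^2\|\nabla(f-I_h f)\|_{L^2(K)}^2\leq C_{\kappa(\mathcal T)}\bigl(1+\kappa_*(\mathcal T)\bigr)^2 h^{2(k+1)}\|f\|_{W^{k+1,\infty}(K)}^2,
\end{equation*}
and summing over $K$ with $\sum_{K\in\mathcal T}|K|\lesssim|D(\mathcal T)|$ delivers the stated bound.

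The one nontrivial point is the clean linear factor $(1+\kappa_*(\mathcal T))$: a careless application of Faà di Bruno would produce $\kappa_*^{k+1}$ since each of up to $k+1$ derivative factors could a priori cost a $\kappa_*$. The saving relies on two observations acting together: $F_K\in\mathbb P^k$ forces $\partial^{k+1}F_K\equiv 0$, and norm equivalence on the finite-dimensional space $\mathbb P^k(\hat K)^d$ lets me read $\partial^k F_K$ off the $H^k$-controlled $\kappa_*$; combinatorially at most one factor of the highest-order derivative can appear per term. Everything else in the argument is routine scaling and summation.
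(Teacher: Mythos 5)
The paper itself gives no proof of this lemma, asserting it as a consequence of ``the local approximation property on the reference element $\hat K$ and the chain rule for differentiation''; your argument is a correct elaboration of exactly that standard route, and the Fa\`a di Bruno combinatorics (since $F_K\in\mathbb P^k(\hat K)^d$ kills $\partial^{k+1}F_K$, and $k\,m_k\leq k+1$ forces at most one factor of $\partial^k F_K$ per term, namely in $(\partial^2 f)(F_K)\,\partial^k F_K\,\partial F_K$) is the genuinely nontrivial observation that produces the clean linear dependence on $\kappa_*(\mathcal T)$; you have it right.

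One point you should not leave implicit: none of the bounds you actually invoke --- $\|\partial^\mu F_K\|_{L^\infty(\hat K)}\lesssim\kappa(\mathcal T)$ or $\kappa_*(\mathcal T)$, the Bramble--Hilbert estimate on the fixed reference element, and the Jacobian bound $\|\det\nabla F_K\|_{L^\infty}\leq\kappa(\mathcal T)^d$ --- supplies any power of $h$, so the $h^{k+1}$ in your final display is at that point unaccounted for. The $h$-powers must come from the scaling of the parametrization $F_K$ (equivalently from the $h$-weighting implicit in the paper's $H_h$-norms entering $\kappa$ and $\kappa_*$), under which $\partial^\mu F_K = O(h^\mu)$ and hence each Fa\`a di Bruno term, carrying total derivative order $k+1$ in $F_K$, contributes a factor $h^{k+1}$. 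This is the step that converts your shape-regularity bound into an actual convergence rate, and it is worth stating explicitly rather than subsuming under ``routine scaling and summation.''
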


\begin{remark}
	When $k=3$, the chain rule yields
	\begin{align}
		\| \nabla_{D(\mathcal T_0)}^4 (f\circ F(\mathcal T)) \|_{L^2(D(\mathcal T_0))}
		&\leq
		C \| f \|_{W_h^{4,\infty}(D(\mathcal T))}
		%			\Big(
		\| \nabla_{D(\mathcal T_0)}^2 F(\mathcal T) \|_{L^{4}(D(\mathcal T_0))}^2
		+
		\cdots
		%			\Big)
		\,
		.
		\notag
	\end{align}
	To control the first term on the right-hand side, we incorporate the $W_h^{k-1,4}$ norm into the definition \eqref{P}.
\end{remark}

%For any element $K\in \hat {\mathcal T}_{h,*}^0$

\section{Numerical scheme and the main results}
\label{sec:main-rerults}

\subsection{Numerical scheme}
With the notation and identifications introduced in Section \ref{sec:FEM}, we are ready to state the fully discrete finite element method for the continuous weak formulation \eqref{subeq:2phase}.

At time level $m,m=0,\cdots,[T/\tau]$, with $T>0$ and $\tau>0$ being the final time and the uniform time step, respectively, suppose we know, from the previous time level, the discrete triangulation $\Ohm = \Ohmpm \cup \Ghm$ which is equivalent to $\Ohsfpm \cup \Ghsf$ and parametrized by $\Xhm: \Ohsf\rightarrow\Ohm$.
Then, we define the following finite element subspaces which encode boundary conditions:
\begin{align}
	V_h^k(\Ohm) &= S_h^k(\Ohm) \cap H_0^1(\Ohm) ,
	\notag\\
	Q_h^k(\Ohm) &= S_h^k(\Ohmpm) \cap L_0^2(\Ohm) .
	\notag
\end{align}
Recall that $S_h^k(\Ohmpm)$ denotes the broken finite element space.
We aim to find the finite element solution $(\uhm, \phm, \XhM, \khm)\in (V_h^k(\Ohm)^d \times Q_h^{k-1}(\Ohm)\times S_h^k(\Ohm)^d\times S_h^k(\Ghm)^d)$ such that the following discrete weak formulation holds for all test functions $(\phi_h, \psi_h, \eta_h)\in (V_h^k(\Ohm)^d \times Q_h^{k-1}(\Ohm)\times S_h^k(\Ghm)^d)$: 
\begin{subequations}\label{subeq:2phase-h}
	\begin{align}
		\int_{\Ohm} 2\nu \D_\Ohm \uhm \cdot \D_\Ohm \phi_h
		&=
		\int_\Ohm \phm \nabla_\Ohm \cdot \phi_h
		-
		\int_\Ghm \khm \cdot \phi_h
		\label{eq:2phase-h1} , \\
		\int_\Ohm \nabla_\Ohm \cdot \uhm \psi_h &= 0  , \label{eq:2phase-h2}
		\\
		\frac{\XhM-\Xhm}{\tau} &= \uhm \qquad\mbox{at finite element nodes} , \label{eq:2phase-h3}
		\\
		\int_\Ghm \nabla_\Ghm \XhM \cdot \nabla_\Ghm \eta_h &=
		\int_\Ghm \khm \cdot \eta_h
		. \label{eq:2phase-h4}
	\end{align}
\end{subequations}
The finite element solution $\XhM: \Ohsf\rightarrow\OhM$ determines the position of the discrete domain $\OhM$ and interface $\GhM$ in the next time step.

Since mixed finite elements are used, we adopt the following convention: $I_h$ denotes the Lagrange interpolation of order $k-1$ when applied to pressure-related quantities, and the Lagrange interpolation of order $k$ in all other cases.
\begin{remark}\upshape
	The velocity equation \eqref{eq:2phase-h3} indicates that the proposed scheme is a pure Lagrangian method (i.e., all mesh vertices follow the flow from the PDE). In practice, it is well known that this velocity might lead to poor-quality meshes or mesh degeneration within a short time.
	A common strategy to mitigate this mesh distortion issue is to apply tangential smoothing techniques to the interface evolution velocity, such as the Barrett--Garcke--N\"urnberg (BGN) type methods \cite{BGN2007JCP, BGN2008JCP, DL24}, and then harmonically extend the smoothed interface velocity into the bulk regions. This approach can be viewed as an instance of a more general class of arbitrary Lagrangian--Eulerian (ALE) methods.
	
	An ALE semi-discrete finite element method, based on harmonic extension without interfacial tangential smoothing, has been analyzed in \cite{LQ25} in the context of tension-free two-phase flows. We believe that the numerical stability of the auxiliary ALE part in our fully-discrete setting is similar to the discussion in \cite{LQ25}. However, extending the arguments of \cite{LQ25} to the projection-error framework used in this paper might be challenging and will be addressed in future work.
	
	Unfortunately, the analysis presented in this paper does not apply to the standard BGN-type interfacial tangential smoothing. We refer the readers to Remark \ref{rmk:BGN} for further discussion.
\end{remark}
%where $\Ohm$ is parametrized by $\Xhm: \Ohsf\rightarrow\Ohm$, and $\Ghm$ is the discrete interface of $\Ohm$.
%We define $\Omega_{h,+}^m$ and $\Omega_{h,-}^m$, with the property $\Ohm := \Ohmm \cup \Ohmp = \Omega$, to be the finite element domain outside and inside the discrete interface $\Ghm$ at the time level $t_m$.

%\begin{remark}\upshape
%	Although $\Ohm=\Omega$, by writing as $\Ohm$, we know how to identify the nodal value of finite element functions.
%\end{remark}

\subsection{Geometric relations near the interface}\label{section:geometry}

%Let $\delta>0$ be a sufficiently small constant such that every point $x$ in the $\delta$-neighborhood of the exact curve $\Gamma^m=\Gamma(t_m)$, denoted by $D_\delta(\Gamma^m)=\{x\in\R^2: {\rm dist}(x,\Gamma^m)\le \delta\}$, 
%has a unique smooth projection of distance retraction onto $\Gamma^m$, denoted by $a^m(x)$, satisfying the following relation: 
%\begin{align*}
%	x - a^m(x) = \pm |x-a^m(x)| n^m(a^m(x)) ,
%\end{align*}
%where $n^m$ is the unit normal vector on $\Gamma^m$. It is known that such a constant $\delta$ exists and only depends on the curvature of $\Gamma^m$ (thus $\delta$ is independent of $m$, but possibly dependent on $T$); see \cite[Lemma~14.17]{GT2001} and \cite[Theorem 6.40]{Lee18}. 

Let $\Gamma^m := \Gamma(t_m)$, with $t_m := m\tau$, and denote by $n^m$ the outward unit normal vector on $\Gamma^m$. We then define $n^m_*$ as the extension of $n^m$ to a neighborhood of $\Gamma^m$ via $n^m_*:=n^m\circ a^m$, where $a^m$ is the unique smooth distance retraction from a $\delta$-neighborhood $D_\delta(\Gamma^m):=\{x\in\R^d: {\rm dist}(x,\Gamma^m)\le \delta\}$ onto $\Gamma^m$, satisfying the following relation: 
\begin{align*}
	x - a^m(x) = \pm |x-a^m(x)| n^m(a^m(x)) .
\end{align*}
It is known that such a constant $\delta>0$ exists and only depends on the curvature of $\Gamma^m$ (thus $\delta$ is independent of $m$, but possibly dependent on $T$); see \cite[Lemma~14.17]{GT2001} and \cite[Theorem 6.40]{Lee18}.
The (extended) normal projection operators are defined as $N^m:=n^m (n^m)^\top$ and $\Nsm:=n^m_* (n^m_*)^\top$, and the (extended) tangential projection operators are defined as $T^m:=I - N^m$ and $\Tsm:=I - \Nsm$.

We denote by $x_j^m,j=1,\cdots,J,$ with $J=|\mathcal N(\Ghm)|$, the nodes of the numerical interface $\Gamma_h^m$. Then, we define $\hat X_{h,*}^m$ as the unique interface finite element function whose nodal values are $a^m(x_j^m)\in\Gm,j=1,\cdots,J$, and denote by $\Ghsm$ the graph of $\hat X_{h,*}^m$.
This discrete interface $\Ghsm$ for consistency analysis has been studied extensively in \cite{BL24FOCM,BL24,BGV26}, and will play an important role in this paper as well, owing to its excellent interface approximation properties; cf. Appendix \ref{sec:super}. 

In the remainder of this section, we will identify all finite element functions as elements either in $S_h(\Ghsm)$ or $S_h(\Ghsm)^d$, via the canonical nodal vector identification described in Section \ref{sec:FEM}.
We define $X^{m+1}:\Gamma^0\rightarrow\Gamma^{m+1}$ and $Y^{m+1}:= X^{m+1}\circ (X^m)^{-1} :\Gamma^m\rightarrow\Gamma^{m+1}$ to be the exact global and local flow maps along $u(t)$, respectively.
Let the interface finite element function $X_{h,*}^{m+1}:\Ghsm\rightarrow \Gamma_{h,*}^{m+1}$ be the interpolation of the local flow, which is uniquely determined by the relation $$X_{h,*}^{m+1}(p) - \hat X_{h,*}^m(p) = Y^{m+1}(p) - {\rm id_\Gm}(p)\quad\forall p\in\mathcal N(\Ghsm)\subset\Gm .$$
Then, it follows that 
\begin{align}
	&X_{h,*}^{m+1} - \hat X_{h,*}^m =I_h \big((Y^{m+1} - {\rm id_\Gm}) \circ a^m|_\Ghsm \big) &&\mbox{on}\,\,\, \Ghsm , \label{eq:X-id1} \\
	&Y^{m+1} - {\rm id}_\Gm = \tau ( {{u^m }}  +  g^m )  &&\mbox{on}\,\,\,  \Gamma^m, \label{eq:X-id2}
\end{align}
where $u^m := u(t_m)$ and $g^m$ is a smooth correction from the Taylor expansion, satisfying the following $W^{1,\infty}$ estimate: 
\begin{align}\label{W1infty-g}
	\|g^m\|_{W^{1,\infty}(\Gamma^m)}\le C\tau . 
\end{align}

%The projection error at the time level $m$ is defined as $\ehxm := {\rm id}|_\Ghm -  \hat X_{h,*}^m$ where $\hat X_{h,*}^m$ is the interface finite element function whose nodal values coincide with the projected nodes of $\Gamma_h^m$ onto $\Gm$ via $\Nsm$.
%Let $\Ghsm$ be the discrete surface whose parametrization map is $\hat X_{h,*}^m$.

The local trajectory error and the projection error at time level $m$ are defined as $\exm := X_h^m -  X_{h,*}^m$ and $\ehxm := X_h^m -  \hat X_{h,*}^m$ respectively.
According to \cite[Eqs. (3.12)--(3.13)]{BL24FOCM}, we have the following nodal relation
\begin{align}\label{eq:geo_rel_1}
	\ehxm = I_h\big[ (\exm\cdot \nsm)\nsm \big] + r_h^m ,
\end{align}
with $r_h^m := \ehxm - I_h\big[ (\exm\cdot \nsm)\nsm \big]$ satisfying
\begin{align}\label{eq:geo_rel_2}
	|r_h^m| \lesssim |[1 - \nsm (\nsm)^\top ] \exm|^2
	\quad\mbox{at the nodes of $\Ghsm$}.	
\end{align}
$r_h^m$ can be interpreted as a quadratic remainder of the nodal-wise orthogonal projection due to the presence of curvature.

Therefore, we deduce from \eqref{eq:X-id1}, \eqref{eq:X-id2} and \eqref{eq:geo_rel_1} that
\begin{align}\label{eq:geo_rel_3}
	\begin{aligned}
		X_h^{m+1} - X_h^m 
		&= \exM - \ehxm + X_{h,*}^{m+1} - \hat X_{h,*}^m \\
		&= \exM - \ehxm + \tau I_h\big((u^m + g^m) \circ a^m|_\Ghsm \big) ,
	\end{aligned}
\end{align}
at the finite element nodes in $\mathcal N(\Ghsm)$.
This relation helps us convert the numerical displacement $X_h^{m+1} - X_h^m$ to the error displacement $\exM - \ehxm$.

The following geometric identities related to the projection error are well-known (cf. \cite[Eqs. (A.15)--(A.17)]{BL24FOCM}): 
If we define $\rho_h^m := I_h(\Nsm (\hat X_{h,*}^{m + 1} -\hat X_{h,*}^{m})) - I_h((Y^{m + 1} - {\rm id}_\Gm)\circ a^m|_\Ghsm)\in S_h(\Ghsm)^d$, then at all finite element nodes in $\mathcal N(\Ghsm)$, it holds that
\begin{align}
	\Nsm (\hat X_{h,*}^{m + 1} -\hat X_{h,*}^{m})
	&= (Y^{m + 1} - {\rm id}_\Gm)\circ a^m|_\Ghsm + \rho_h^m 
	%	&& \mbox{at the nodes} ,
	\label{eq:geo_rel_4}
	\\
	\mbox{where}\,\,\, |\Nsm \rho_h^m| 
	&\le C_0 \tau^2 + C_0 |T_*^m (\hat X_{h,*}^{m + 1} -\hat X_{h,*}^{m})|^2
	%	&& \mbox{at the nodes}
	, \label{eq:geo_rel_5}\\
	T_*^m (\hat X_{h,*}^{m + 1} -\hat X_{h,*}^{m})
	&= T_*^m (X_{h}^{m + 1} - X_{h}^{m})
	+ T_*^m (N_*^{m+1}\circ\hat X_{h,*}^{m+1}-N_*^{m}\circ\hat X_{h,*}^{m})  \ehxM
	%	&& \mbox{at the nodes} 
	. \label{eq:geo_rel_6}
\end{align}
%where $C_0$ is a constant that is independent of $\km$ and $\ksm${\r???}.

\begin{remark}\upshape
	The geometric setting here is slightly different from \cite[Figure 4]{BL24FOCM} since in this paper the local flow $X_{h,*}^{m+1}:\Ghsm\rightarrow \Gamma_{h,*}^{m+1}$ is not necessarily almost normal. Nevertheless, the non-zero tangential component of this local flow will only introduce a small term of order $O(\tau^2)$ along the normal direction, making the upper bound in \eqref{eq:geo_rel_5} still valid for the normal projection $\Nsm\rho_h^m$ while the estimate for $\rho_h^m$ itself will degenerate to
	\begin{align}
		| \rho_h^m| 
		&\le C_0 \tau + C_0 |T_*^m (\hat X_{h,*}^{m + 1} -\hat X_{h,*}^{m})|^2 \notag .
	\end{align}
\end{remark} 

For the reader's convenience, a summary of frequently used symbols introduced in this section is provided in Appendix \ref{sec:notation}.

\subsection{Convergence theorem}

%Let $\Ohsm$ be the iso-parametric triangulation of $\Omega$ which is fitted to $\Ghsm$; see Section \ref{sec:bulk-mesh} for the detailed construction.
Let $\Phi^m:\Ohsm\rightarrow \Omega^m := \Omega(t_m)=\Omega$ be Lenoir's lift operator and let the superscript $^{-\ell}$ denote the inverse lift operator on the interface; see Section \ref{sec:Lenoir} for more details.
With the above definitions of the hatted $X$-variables,
we are now ready to state the main convergence theorem.
\begin{theorem}\label{thm:main}
	Given a final time \(T>0\), suppose that the exact solution \((u(t),p(t),\Gamma(t))\) is sufficiently smooth on \([0,T]\) in the sense of Remark~\ref{rmk:regularity}. Let \(k\geq 3\), and assume the time step condition $\tau \leq C_0 h^k$.
	Suppose further that the initial triangulations \(\Omega_h^0:=\Ohso\) and \(\Gamma_h^0:=\Ghso\) are shape regular in the sense that $\omega_*(\Ohso)+\omega_*(\Ghso)\leq C_0$, and that the initial approximations satisfy 
	$\| X_h^0 - I_h(X^0\circ \Phi^0) \|_{L^\infty(\Omega_h^0)}  \leq C_0 h^{k+1}$.
	%	\[ \begin{aligned} &\| u_h^0 - I_h(u^0\circ \Phi^0) \|_{L^\infty(\Omega_h^0)} + \| p_h^0 - I_h(p^0\circ \Phi^0) \|_{L^\infty(\Omega_{h,\pm}^0)} \\ &\qquad + \| X_h^0 - \hat X_{h,*}^0 \|_{L^\infty(\Gamma_h^0)} + \| \kappa_h^0 - I_h(H^0 n^0)^{-\ell} \|_{L^\infty(\Gamma_h^0)} \leq C h^{k+1}. \end{aligned} \]
	Then, for every \(m=0,\ldots,\lfloor T/\tau\rfloor\), the following error estimates hold:
	\begin{align}
		\Big(\sum_{j=0}^m \tau \| u_h^j - I_h(u^j\circ \Phi^j) \|_{H^1(\Omega_{h}^j)}^2\Big)^{1/2}
		&\leq
		C
		(\tau + h^{k-1/2}) ,
		\notag\\
		\Big(\sum_{j=0}^m \tau \| p_h^j - I_h(p^j\circ \Phi^j) \|_{L^2(\Omega_{h, \pm}^j)}^2\Big)^{1/2}
		&\leq
		C
		h^{-1/2}
		(\tau + h^{k-1/2}) ,
		\notag\\
		\max_{j=0,\cdots,m}
		\| X_h^j - \hat X_{h,*}^j \|_{H^1(\Gamma_{h}^j)}
		&\leq
		C
		(\tau + h^{k-1/2}) ,
		\notag\\
		\max_{j=0,\cdots,m}
		\| \kappa_h^j - I_h(H^j n^j)^{-\ell} \|_{L^2(\Gamma_{h}^j)}
		&\leq
		C
		h^{-1}
		(\tau + h^{k-1/2})
		.
		\notag
	\end{align}
	Here \(H^j\) denotes the mean curvature of \(\Gamma^j:=\Gamma(t_j)\), and we define $u^j:=u(t_j)$ and $p^j:=p(t_j)$. In the case \(d=2\), we additionally assume that the interface interpolation nodes associated with \(I_h\) coincide with the Gauss--Lobatto nodes. The constants \(C_0\) and \(C\) are independent of \(\tau\), \(h\), and \(m\), while \(C\) may depend on \(T\) and the exact solution.
\end{theorem}
%\begin{remark}\upshape
%	For $d=2$, the rate of convergence in the bulk velocity and interface position is optimal under $L_t^2 H_x^1$ norm and $L_t^\infty H_x^1$ norm, respectively. The degree requirement $k\geq 2$ is needed to impose the induction hypothesis \eqref{eq:ind-hypo}.
%	For $d=3$, the rate will degrade down by $1/2$ due to the lack of super-approximation of $I_h$ on the interface (cf. $d_\kappa^m$ in Lemma \ref{lemma:dm}, Remark \ref{rmk:1/2} and Lemma \ref{Lemma-GLW}). The degradation of the convergence rate leads to a stronger degree requirement $k\geq 3$.
%\end{remark}
\begin{remark}\upshape
	The CFL-type constraint $\tau\leq C h^k$ is needed in Section \ref{sec:conv-err}. Some evidence in \cite[Remark 2.2 and 5.1]{BGV26} indicates that this constraint might be unnecessary and removable.
\end{remark}
\begin{remark}\upshape
	The degree condition $k\geq 3$ is required to impose the induction hypothesis \eqref{eq:ind-hypo}.
\end{remark}
\begin{remark}\upshape
	The proof of Theorem \ref{thm:main} can be directly extended to more general situations, including 
	%	piecewise constant viscosity,
	a non-zero external force and the nonlinear stationary Navier--Stokes equations.
\end{remark}
\begin{remark}\upshape\label{rmk:regularity}
	We assume that the exact solution $(u(t), p(t), \Gamma(t))$ is sufficiently smooth on
	\([0,T]\) in the following sense: The flow map \(X(t)\), its inverse, the distance
	projection \(a^m\), the extended normal \(n_*^m\), the mean curvature \(H^m\),
	and the Lenoir lift maps \(\Phi^m\) have all bounded spatial derivatives required by the
	interpolation estimates, super-approximation estimates, and geometric perturbation
	arguments used below. In particular, this assumption should cover the norm boundedness
	needed in Lemma~2.2, Lemmas~C.1--C.5, and the Taylor remainder estimates in
	\eqref{W1infty-g} and Section~6.1. 
	%	We do not attempt to optimize these regularity requirements.
\end{remark}

\section{Consistency analysis}\label{sec:cons_anal}

\subsection{Discrete trace extension operator $E_h$}
\label{sec:Eh}

Assuming the initial interface $\Gamma^0$ is sufficiently smooth, we know from \cite[Chapter 2, Theorem 5.8]{Necas11book}
%and \cite[Theorem 2.6.11]{SS10book}
that there exists a continuous trace extension operator $E: W^{s-1/q,q}(\Gamma^0)\rightarrow W^{s,q}(\Omega\backslash \Gamma^0)$ for any $q\in(1,\infty)$ and $s\geq 1$. We may fix a smooth cut-off function $\eta\in C_c^\infty(\Omega)$ such that $\eta=1$ in a small neighborhood of $\Omega^0_-$.

We denote by $\Phi^0: \Ohso\rightarrow\Omega$ the globally continuous and piecewise $C^{k+1}$ lift map constructed by Lenoir \cite[Section 5]{Lenoir86}; see Section \ref{sec:Lenoir} for the detailed construction and approximation properties.

On the iso-parametric sub-triangulations $\Ohsom$ and $\Ohsop$, we can separately define the corresponding Scott--Zhang interpolations (or Cl\'ement-type interpolations; see Remark \ref{rmk:SZ}) $I_h^{\rm SZ}(\Ohsom)$ and $I_h^{\rm SZ}(\Ohsop)$. We furthermore use $I_h^{\rm SZ}(\Ohso)$ or simply $I_h^{\rm SZ}$ to denote their trivial patch-wise gluing.
Note that $I_h^{\rm SZ}$ satisfies the standard iso-parametric interpolation error estimates in Lemma \ref{lemma:Ih}.

For any finite element function $f_h$ defined on the discrete interface (e.g. $\Ghsm$ and $\Ghm$), 
using the conventional identification of nodal values introduced in Section \ref{sec:FEM}, we can first identify its domain as the initial discrete interface, i.e., $f_h\in S_h^k(\Ghso)$, and then lift it onto $\Gamma^0$ via $f_h\circ (a^0|_\Ghso)^{-1}$.
Then, the discrete trace extension operator is defined in the following way:
\begin{align}\label{eq:Eh}
	E_h f_h := I_h^{\rm SZ} \left[\big(\eta E (f_h\circ (a^0|_\Ghso)^{-1})\big)\circ \Phi^0 \right] \in S_h^k(\Ohso)
\end{align}
with the mapping property
\begin{align}\label{eq:Eh-map}
	&\| E_h f_h \|_{H_h^s(\Ohso)}
	\leq
	C
	\Big\| E \big(f_h\circ (a^0|_\Ghso)^{-1}\big)\circ\Phi^0 \Big\|_{H_h^s(\Ohso)}
	\notag\\
	&\leq
	C
	\Big\| E \big(f_h\circ (a^0|_\Ghso)^{-1}\big) \Big\|_{H^s(\Omega)}
%	\notag\\
%	&
	\leq
	C
	\| f_h \|_{H^{s-1/2}(\Ghso)},
\end{align}
where $s\in[1,3/2]$ and we have used the well-known (quasi-local) $W^{s, q}$-stability of the Scott--Zhang interpolation $I_h^{\rm SZ}$ for any $sq\geq 1$ with $s\geq 1$ (cf. \cite[Corollary 4.1]{SZ90}) and the mapping properties of $E$ and $\Phi^0$ (cf. \cite[Section 5.2]{Lenoir86}) successively.  
\begin{remark}\upshape\label{rmk:SZ}
	The Scott--Zhang interpolation is well-defined on triangulations $\Ohsfm$ and $\Ohsfp$ whose facets are flat. 
	%		Since their iso-parametric counterparts, $\Ohsom$ and $\Ohsop$, are shape regular in the sense of \cite[Theorem 1]{Lenoir86}, 
	The iso-parametric version of the Scott--Zhang interpolation can be defined via pull-back: For instance, given $f_h\in S_h(\Ohsom)$, we set
	\begin{align}
		I_h^{\rm SZ}(\Ohsom) f_h := I_h^{\rm SZ}(\Ohsfm) (f_h\circ \hat X_{h,*}^0), \notag
	\end{align}
	where $\hat X_{h,*}^0: \Ohsf\rightarrow\Ohso$ denotes the parametrization map of $\Ohso$. When the domain is clear from the context, we will omit the domain parameter of $I_h^{\rm SZ}$.
	As a consequence of \cite[Theorem 1]{Lenoir86}, the approximation and stability properties of the Scott--Zhang interpolation remain valid for iso-parametric triangulations, provided that the shape regularity constants are bounded.
	%	The generalization of the stability property of the Scott--Zhang interpolation to iso-parametric finite element spaces is highly non-trivial. This is a consequence of a quasi-local version of the Bramble--Hilbert lemma for broken Sobolev spaces proved in \cite[Section 3.3]{CD15}.
	%	Such generalization also applies to our case where the pullback map $\Phi^0$ is globally continuous and piecewise smooth.
	
	We also note that the fractional-order stability of $I_h^{\rm SZ}$ follows from a fractional version of the Bramble--Hilbert lemma; see \cite[Theorem 6.1]{DS80}.
	
	Alternatively, one can use the Cl\'ement-type interpolation in the definition of $E_h$, which shares similar stability and approximation properties as $I_h^{\rm SZ}$; cf. \cite[Appendix B]{Li19}.
\end{remark}
%Note that under the induction hypothesis {\r???} and the identifications of finite element functions (Section \ref{sec:FEM}), we have
%\begin{align}\label{eq:Eh-est}
%	\| E_h f_h \|_{H^1(\Ohsf)}
%	\sim
%	\| E_h f_h \|_{H^1(\Ohsm)}
%	\sim
%	\| E_h f_h \|_{H^1(\Ohm)}
%	\lesssim
%	\| f_h \|_{H^{1/2}(\Ghsm)}
%	.
%\end{align}
%\begin{align}
%	\int_{\Omega^0_+} \nabla E_h f_h \cdot \nabla \varphi_h
%	=
%	\int_{\Omega^0_+} \nabla (\eta E f_h^\ell) \cdot \nabla \varphi_h .
%\end{align}
%This is a Ritz
\begin{remark}\upshape
	Since the Scott--Zhang interpolation $I_h^{\rm SZ}$ preserves the discrete boundary data (cf. \cite[Eqs. (2.17), (2.18)]{SZ90}), $E_h f_h$, defined in \eqref{eq:Eh}, is a legitimate extension of $f_h$ from the interface into the bulk domain.
\end{remark}

\subsection{Construction of the consistency bulk mesh $\Ohsm$}\label{sec:bulk-mesh}

To derive the error equation, we need to properly define a set of consistency equations which are reasonably compatible with the numerical scheme \eqref{subeq:2phase-h}.
To this end, the first step is to construct a consistency domain $\Ohsm\approx \Omega^m$ which should be equivalent to the initial triangulation $\Ohso$.
The most widely used construction of $\Ohsm$
in the previous literature of iso-parametric finite element methods and the arbitrary Lagrangian--Eulerian methods is the Lagrange interpolation of the exact flow map. This approach keeps track of the information along particle trajectories.
However, the corresponding bulk mesh discrepancy error is of the same order as $\nabla e_x^m$, i.e. $\D_\Ohm - \D_\Ohsm \approx \nabla e_x^m \, \D_\Ohsm$, which, unfortunately, cannot be controlled by the $H^1$ ellipticity of the Stokes velocity (cf. the first term on the left-hand side of \eqref{eq:parab}). Therefore, the associated errors in the trajectory sense are not stable in general.

For a finer construction of $\Ohsm$, we start from the numerical mesh $\Ohm$ which is already known at $t=t_m$.
% The discrepancy between the discrete interface $\Ghm$ and smooth interface $\Gm$ is measured by the distant projection error $\ehxm$ defined in Section \ref{section:geometry}.
Eqs. \eqref{eq:2phase-h3}--\eqref{eq:2phase-h4} can be viewed as a Dziuk-type method for the interface evolution transported by the Stokes velocity $u_h^m$. 
Since it is well-known in \cite{BL24,BL24FOCM,BGV26} that the interface distance projection error $\ehxm$ enjoys better stability than trajectory error $\exm$ (see Section \ref{section:geometry} above \eqref{eq:geo_rel_1} for the definitions of $\ehxm$ and $\exm$), a natural idea is to require $\Ohsm$ fitted to $\Ghsm$.

It remains to determine the bulk part of $\Ohsm$.
This raises the question:
\begin{center}
	{\it To which consistency bulk triangulation should $\Ohm$ be compared?}
\end{center}
Since $\Ghm$ differs from $\Ghsm$ by $\ehxm$, this question is equivalent to: 
\begin{center}
	{\it How to extend the interface deformation $\ehxm$ into the bulk domain in a stable way?}
\end{center}
The discrete extension operator defined in Section \ref{sec:Eh} seems to be a promising candidate according to its excellent stability; see \eqref{eq:Eh-map}.
To this end, we define the consistency bulk mesh $\Ohsm$ with its parametrization map given by 
\begin{align}\label{eq:Ohsm-def}
	\hat X_{h,*}^m  := X_{h}^m - E_h \ehxm ,
\end{align}
where $X_{h}^m:\Ohsf\rightarrow\Ohm$ is the parametrization map of $\Ohm$.
In other words, $\Ohsm$ can be viewed as the perturbation of the numerical bulk mesh $\Ohm$ by the extension of the interface projection error $E_h \ehxm$.
Note that $\hat X_{h,*}^m$ was previously defined in Section \ref{section:geometry} as an interface finite element function. Since \eqref{eq:Ohsm-def} defines an extension of this function, for notational simplicity, we retain the same symbol without risk of ambiguity.
%Note that $\hat X_{h,*}^m$ defined in \eqref{eq:Ohsm-def} is compatible with the same symbol introduced in Section \ref{section:geometry}.

By construction, $\hat X_{h,*}^m$ depends directly on the numerical solution $X_h^m$. Therefore, it is not a pure consistency object which is usually defined by the interpolation of the exact smooth solution.
The rigorous stability justification of $\hat X_{h,*}^m$ is a key component of this convergence analysis framework and is addressed in Section \ref{sec:shape-reg}.
%The stability of $\hat X_{h,*}^m$ shall be addressed in Section \ref{sec:shape-reg}.

%\subsection{Shape regularity constants}

For $\Ohsm$ and $\Ghsm$, we use the abbreviation
\begin{align}
	\begin{aligned}
		&\mu_m
		=
		\omega(\Ohsm),
		&&\mu_{*,m}=\omega_*(\Ohsm)
		,\notag\\
		&\nu_m
		=
		\omega(\Ghsm),
		&&\nu_{*,m}=\omega_*(\Ghsm)
		,
	\end{aligned}
\end{align}
where $\omega$ is defined in \eqref{P}.
%For $\Ohsm$ and $\Ghsm$, we use the abbreviation
%\begin{align}
%	\begin{aligned}
	%		\beta
	%		&=
	%		\kappa(\Ohsm) + \kappa(\Ghsm)
	%		  ,\notag\\
	%		\beta_1
	%		&=
	%		\kappa_*(\Ohsm)
	%		,\notag\\
	%		\beta_2
	%		&=
	%		\kappa_*(\Ghsm)
	%		,
	%	\end{aligned}
%\end{align}
It is easy to see the size equivalence relations
\begin{equation}\label{eq:mu-nu-equiv}
	\begin{split}
		&\mu_{*,m}\sim \| \hat X_{h,*}^m \|_{H_h^k(\Ohsf)} \sim \| \hat X_{h,*}^m \|_{H_h^k(\Ohso)} ,
		\\
		&\nu_{*,m}\sim \| \hat X_{h,*}^m \|_{H_h^k(\Ghsf)} \sim \| \hat X_{h,*}^m \|_{H_h^k(\Ghso)} .
	\end{split}
\end{equation}
%where $A\sim B$ means that $C^{-1}A \leq B \leq CA$ for some constant $C>0$.
In addition, we introduce the following notation:
\begin{align}
	&\bar\mu_m = \max_{j=1,\cdots,m} \mu_j,\quad
	\bar\nu_m = \max_{j=1,\cdots,m} \nu_j ,
	\notag\\
	&\bar\mu_{*,m} = \max_{j=1,\cdots,m} \mu_{*,j},\quad
	\bar\nu_{*,m} = \max_{j=1,\cdots,m} \nu_{*,j} .
	\notag
\end{align}

%Consequently, we have the norm equivalence
%\begin{align}
%	C_\mum^{-1}
%	\| f_h \|_{W^{l,p}(\Ohsf)}
%	\leq
%	\| f_h \|_{W^{l,p}(\Ohsm)}
%	\leq
%	C_\mum
%	\| f_h \|_{W^{l,p}(\Ohsf)}
%	,
%	\notag\\
%	C_\mum^{-1}
%	\| g_h \|_{W^{l,p}(\Ghsf)}
%	\leq
%	\| g_h \|_{W^{l,p}(\Ghsm)}
%	\leq
%	C_\mum
%	\| g_h \|_{W^{l,p}(\Ghsf)},
%\end{align}
%for all $l=0,1$ and $p\in[1,\infty]$.

\subsection{Lenoir's lift operator $\Phi^m$ revisited}\label{sec:Lenoir}
The main numerical analysis will be carried out on the consistency bulk mesh $\Ohsm$ (see Section \ref{sec:bulk-mesh} for the detailed construction).
Note that the pressure variable $p(t)$ could have jumps across the interface $\Gamma(t)$. To avoid the degeneration of the approximation order due to the discontinuity, we are going to use Lenoir's lift operator $\Phi^m:\Ohsm\rightarrow\Omega$ (cf. \cite[Section 5]{Lenoir86}) which is globally continuous and piecewise smooth. 

We briefly recapitulate the construction of $\Phi^m$ (cf. \cite[Section 5.1]{Lenoir86}):
For each element $K \subset \Ohsm \subset \R^d$, let $0 \leq p_K \leq d-1$ denote the maximum dimension of the facets of $K$ that lie on the interface $\Ghsm$. The facet attaining maximum dimension is unique and is denoted by $e_K$, with $e_K\subset \Ghsm$. For example, in $\R^3$, if $K$ has exactly one edge lying on $\Ghsm$, then $p_K = \dim(e_K) = 1$.
If $K$ does not intersect $\Ghsm$, we set $p_K = 0$.
\begin{itemize}
	\item 
	For an element $K \subset \Ohsm$ such that $p_K = 0$, we define $\Phi^m|_K$ to be the identity map ${\rm id}_K$.
	
	\item 
	For any other element $K \subset \Ohsm$ with $p_K = \dim(e_K) \geq 1$, we define $\Phi^m|_K$ as the mapping which is uniquely determined by the following relation (cf. \cite[Eq. (32)]{Lenoir86}):
	\begin{align}\label{eq:Phim}
		(\Phi^m|_K - {\rm id}_K)\circ F_K := 
		\Big(1 - \sum_{i=p_K+2}^{d+1} \hat\lambda_i \Big)^{k+2}
		(a^m|_\Ghsm - {\rm id}_{\Ghsm}) \circ F_K |_{\hat e_K} \circ \hat Z^{p_K} ,
	\end{align}
	where $k$ denotes the degree of the finite element, $(\hat\lambda_1, \cdots, \hat\lambda_{d+1})$ are the barycentric coordinates on $\hat K$, $F_K: \hat K \rightarrow K$ is the polynomial parametrization map of $K$, 
	%	 and $a^m|_\Ghsm:\Ghsm\rightarrow\Gm$ is the canonical boundary distance retraction.
	and the map $\hat Z^{p_K}: \hat K\rightarrow \hat e_{K}$ is a smooth barycentric coordinate retraction that maps the interior of the reference element $\hat K$ onto the reference interface facet $\hat e_{K}:= (F_K)^{-1}|_{e_K} (e_K)$, whose dimension is $p_K$; see \cite[Fig. 5]{Lenoir86} for an illustration of the retraction map $\hat Z^{\cdot}$ in $d=2,3$.
\end{itemize}
More specifically, in a $d$-dimensional simplex with barycentric coordinates $(\lambda_1, \cdots, \lambda_{d+1})$, the retraction map is defined for $p = 1, \dots, d-1$ as (cf. \cite[Eq. (21)]{Lenoir86})
\begin{align}
	\hat Z^p: (\hat\lambda_1, \cdots, \hat\lambda_{d+1})
	\mapsto
	\Big(\frac{\hat\lambda_1}{1 - \sum_{i=p+2}^{d+1} \hat\lambda_i}, \cdots, \frac{\hat\lambda_{p+1}}{1 - \sum_{i=p+2}^{d+1} \hat\lambda_i}, \underbrace{0, \cdots, 0}_{d-p \text{ zeros}}\Big). \notag
\end{align}
%for $p=1,..., d-1$.
It is straightforward to verify that $\hat Z^p$ is smooth in the interior of the reference element $\hat K$, and continuous up to $\partial\hat K \backslash \{ (0,\cdots, 0, \hat\lambda_{p+2}, \cdots, \hat\lambda_{d+1}): \sum_{i=p+2}^{d+1} \hat\lambda_i = 1 \}$.
Nevertheless, it turns out that the non-smoothness of $\hat Z^p$ does not affect the approximation property of $\Phi^m$, as the factor $(1 - \sum_{i=p+2}^{d+1} \hat\lambda_i )^{k+2}$ in \eqref{eq:Phim} compensates for the singularities.
More precisely, one can show $\Phi^m\in C^\infty({\rm int}(K)) \cap C^{k+1}(\bar K)$ (cf. \cite[Lemma 5]{Lenoir86}).
Furthermore, by construction, $\Phi^m$ is globally continuous, i.e., continuous across adjacent elements (cf. \cite[Lemma 3]{Lenoir86}).

\begin{lemma}\label{lemma:Phi-approx}
	The following reverse trace inequality holds
	\begin{align}
		\| \Phi^m-{\rm id}_\Ohsm \|_{H^1(\Ohsm)}
		&\leq
		C_{\mum, \num}
		h^{1/2}
		\| \Phi^m|_\Ghsm-{\rm id}_\Ghsm \|_{H^{1}(\Ghsm)}
		\notag\\
		&\quad
		+
		C_{\mum, \num}
		h^{3/2}
		\| \Phi^m|_\Ghsm-{\rm id}_\Ghsm \|_{H_h^{2}(\Ghsm)}
		. \notag
	\end{align}
	Using the interpolation error estimate (Lemma \ref{lemma:Ih}),
	\begin{align}
		\| \Phi^m-{\rm id}_\Ohsm \|_{H^1(\Ohsm)}
		\leq
		C_{\mum, \num} (1 + \nusm)
		h^{k+1/2}
		\| \hat X_{h,*}^m \|_{H_h^{k}(\Ghsf)}
		. \notag
	\end{align}
\end{lemma}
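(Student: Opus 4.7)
The proof has two parts mirroring the two inequalities in the statement.

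\textbf{Part 1: The reverse trace inequality.} I would base the argument on three observations. First, Lenoir's construction sets $\Phi^m = \mathrm{id}$ on every bulk element $K \subset \Ohsm$ that does not carry a $(d-1)$-dimensional face on $\Ghsm$; thus $\Phi^m - \mathrm{id}_\Ohsm$ is supported on the thin $O(h)$-layer of elements adjacent to $\Ghsm$, of which there are only $O(h^{-(d-1)})$. This layer-volume factor is precisely the source of the extra $h^{1/2}$ over Lenoir's original $h^k$-estimate. Second, the element-wise formula
\begin{align*}
\Phi^m|_K = a^m \circ F_K \circ {\rm ret}_{\hat K} \circ F_K^{-1}
\end{align*}
is translation-equivariant in $a^m$: replacing $a^m$ by $a^m + c$ for constant $c \in \R^d$ shifts both $\Phi^m - \mathrm{id}$ and $g := \Phi^m|_\Ghsm - \mathrm{id}_\Ghsm$ by $c$, so the gradient $\nabla(\Phi^m - \mathrm{id})$ depends only on $\nabla g$, never on $g$ itself. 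This is what eliminates the would-be $h^{-1/2}\|g\|_{L^2}$ term and unlocks the sharp rate.

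Third, on each boundary element $K$ I would pull back to $\hat K$ via $F_K$ and apply chain-rule bounds
\begin{align*}
\|\hat\phi\|_{L^\infty(\hat K)} \lesssim \|\hat g\|_{L^\infty(\hat e_K)}, \qquad \|\hat\nabla\hat\phi\|_{L^\infty(\hat K)} \lesssim \|\hat\nabla \hat g\|_{L^\infty(\hat e_K)},
\end{align*}
with constants depending polynomially on $\mum, \num$ through the smoothness of the iso-parametric maps and of ${\rm ret}_{\hat K}$. Rescaling to the physical element of diameter $h$ yields $|\phi|_{L^\infty(K)} \lesssim |g|_{L^\infty(e_K)}$ and $|\nabla\phi|_{L^\infty(K)} \lesssim |\nabla g|_{L^\infty(e_K)}$, hence $\|\phi\|_{L^2(K)}^2 + |\phi|_{H^1(K)}^2 \lesssim h^d\bigl(|g|_{L^\infty(e_K)}^2 + |\nabla g|_{L^\infty(e_K)}^2\bigr)$. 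Next I apply the scaled pointwise-to-Sobolev estimate on the $(d-1)$-dimensional face
\begin{align*}
\|v\|_{L^\infty(e_K)}^2 \lesssim h^{-(d-1)}\bigl(\|v\|_{L^2(e_K)}^2 + h^2\|\nabla v\|_{L^2(e_K)}^2 + h^4\|\nabla^2 v\|_{L^2(e_K)}^2\bigr)
\end{align*}
with $v = g$ and $v = \nabla g$; summing over boundary elements and using $\sum_K \|\cdot\|_{L^2(e_K)}^2 = \|\cdot\|_{L^2(\Ghsm)}^2$ gives exactly $\|\Phi^m - \mathrm{id}_\Ohsm\|_{H^1(\Ohsm)}^2 \lesssim h\|g\|_{H^1(\Ghsm)}^2 + h^3\|g\|_{H_h^{2}(\Ghsm)}^2$, which is the first inequality.

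\textbf{Part 2: Consequence via Lemma \ref{lemma:Ih}.} By the construction of $\hat X_{h,*}^m$ from Section \ref{section:geometry}, the Lagrange nodes of $\Ghsm$ all lie on $\Gamma^m$, so $a^m - \mathrm{id}$ vanishes at those nodes; parametrising by $\hat X_{h,*}^m$ over $\Ghsf$, one sees that $g\circ \hat X_{h,*}^m = (I - I_h)(a^m \circ \hat X_{h,*}^m)$ is a pure Lagrange interpolation error of a smooth composition. Applying Lemma \ref{lemma:Ih} together with a chain-rule bound for $a^m \circ \hat X_{h,*}^m$ in piecewise Sobolev norms -- absorbing derivatives of $\hat X_{h,*}^m$ via the size equivalence \eqref{eq:mu-nu-equiv} into $\nusm$ -- gives $\|g\|_{H^1(\Ghsm)} \lesssim (1+\nusm) h^k \|\hat X_{h,*}^m\|_{H_h^k(\Ghsf)}$ and $\|g\|_{H_h^{2}(\Ghsm)} \lesssim (1+\nusm) h^{k-1} \|\hat X_{h,*}^m\|_{H_h^k(\Ghsf)}$. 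Substituting into the first inequality produces the claimed $h^{k+1/2}$ rate with the stated prefactor.

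\textbf{Main obstacle.} The delicate step is the translation-invariance observation in Part 1. A naive chain-rule bound on $|\hat\nabla\hat\phi|_{L^\infty(\hat K)}$ would contain an additional $\|\hat g\|_{L^\infty(\hat e_K)}$ contribution which, after rescaling by $h^{-1}$ from the chain rule and summing over the layer, produces an unwanted $h^{-1/2}\|g\|_{L^2(\Ghsm)}$ term that would destroy the gain. Recognising that $\Phi^m$ depends on $a^m$ only up to an additive constant -- so that the $\|\hat g\|_{L^\infty}$ piece drops out of the gradient bound entirely -- is what makes the argument work; everything else is careful bookkeeping of $h$-powers from volume, face area, and chain-rule Jacobians.
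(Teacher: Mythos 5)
You have correctly identified the core mechanism for the estimate on elements that share a $(d-1)$-dimensional face with $\Ghsm$: the retraction is transverse-constant, so pulling back to $\hat K$ gives pointwise bounds $\|\hat\phi\|_{L^\infty(\hat K)}\lesssim\|\hat g\|_{L^\infty(\hat e_K)}$, $\|\hat\nabla\hat\phi\|_{L^\infty(\hat K)}\lesssim\|\hat\nabla\hat g\|_{L^\infty(\hat e_K)}$, and rescaling from a $d$-dimensional element to a $(d-1)$-dimensional face produces the extra $h^{1/2}$. (The paper routes this through $L^2$ norms in polar coordinates around the retraction vertex rather than through $L^\infty$; that is a cosmetic difference.) Your second part is consistent with the paper's use of Lemma \ref{lemma:Ih}.

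However, there is a genuine gap. Your opening premise — that Lenoir's $\Phi^m$ equals the identity on every bulk element that does not carry a $(d-1)$-dimensional face on $\Ghsm$ — is \emph{false for $d=3$}. Lenoir's construction (and the paper's Section \ref{sec:Lenoir}, as well as \cite[Section 5]{Lenoir86}) also deforms elements $K$ that touch $\Ghsm$ only along \emph{exactly one edge}. The proof in the paper deals with this by splitting $\mathcal T(\Ohsm)$ into three subsets $\mathcal T_1$ (face on $\Ghsm$), $\mathcal T_2$ (exactly one edge on $\Ghsm$, $d=3$ only), and $\mathcal T_3$ (identity). For $K\in\mathcal T_2$ the retraction lands on a $1$-dimensional edge rather than a $(d-1)$-dimensional face, and the volume/edge scaling produces an extra factor of $h$: one arrives at $h\,\|(1-I_h)a^m\|_{L^2(\mathcal E(\Ghsm))}$ (norms over the edge set), which then has to be converted back to norms on the surface $\Ghsm$ by an edge-to-face trace inequality. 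It is precisely this trace step that generates the $h^{3/2}\|g\|_{H_h^2(\Ghsm)}$ term in the statement — a term your account neither produces nor explains. As written, your argument would only prove the estimate for $d=2$; the $d=3$ case requires the $\mathcal T_2$ branch. In fairness, your $L^\infty$-based strategy could be adapted to $\mathcal T_2$ by combining the pointwise-to-Sobolev estimate on a $1$-dimensional edge with a local trace inequality from a face of $\Ghsm$ to its edges, but that step is missing.

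A minor additional comment: the ``translation-equivariance in $a^m$'' observation that you single out as the key obstacle is not really load-bearing. Since $\Phi^m|_K - {\rm id}_K$ is, by construction, precisely $g\circ(F_K\circ{\rm ret}_{\hat K}\circ F_K^{-1})$ with $g=(1-I_h)a^m|_\Ghsm$, the gradient automatically involves only $\nabla g$ and not $g$; no separate invariance argument is needed. The paper simply chooses $f=(1-I_h)a^m|_{F_K(\hat e_K)}$ in its pointwise/norm relations and the issue does not arise.
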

\begin{proof}
	We only prove for $d=3$ since the proof of $d=2$ is easier.
	Let the sub-triangulation
	\begin{align}
		\mathcal T_1(\Ohsm)
		&:=
		\{
		K\in \mathcal T(\Ohsm): \mbox{$K$ has one face on $\Ghsm$}
		\} ,
		\notag\\
		\mathcal T_2(\Ohsm)
		&:=
		\{
		K\in \mathcal T(\Ohsm): \mbox{$K$ has exactly one edge on $\Ghsm$}
		\} ,
		\notag\\
		\mathcal T_3(\Ohsm)
		&:=
		\mathcal T(\Ohsm)\backslash(\mathcal T_1(\Ohsm)\cup\mathcal T_2(\Ohsm)) . \notag
	\end{align}
	If $K\in \mathcal T_1(\Ohsm)$, then by the construction of $\hat Z^{2}: \hat K\rightarrow  \hat e_{K}$ with ${\rm dim}(\hat e_{K}) = 2$, the definition \eqref{eq:Phim} yields the following pointwise size relations at any $\hat x \in \hat K$:
	\begin{align}
		|(\Phi^m|_K - {\rm id}_K)\circ F_K (\hat x) |
		&\leq
		\big| (a^m|_{e_K} - {\rm id}_{e_K}) \circ F_K |_{\hat e_K} \circ \hat Z^{2} (\hat x) \big| ,
		\notag\\
		|\nabla_K(\Phi^m|_K - {\rm id}_K)\circ F_K (\hat x) |
		&\leq
		C_{\mu_m, \nu_m}
		\big| \nabla_{e_K}(a^m|_{e_K} - {\rm id}_{e_K}) \circ F_K |_{\hat e_K} \circ \hat Z^{2} (\hat x) \big| , \notag
	\end{align}
	where we have used the cancellation of singularities in \cite[Lemma 5]{Lenoir86} and the following boundedness
	\begin{align}
		&\| (\nabla_{\hat K} F_K)^{-1} \|_{W^{1,\infty}(\hat K)} \| \nabla_{\hat e_K} (F_K |_{\hat e_K}) \|_{W^{1,\infty}(\hat e_K)}
		\notag\\
		&=
		\| \nabla_K(F_K)^{-1} \|_{W^{1,\infty}(K)} \| \nabla_{\hat e_K} (F_K |_{\hat e_K}) \|_{W^{1,\infty}(\hat e_K)}
		\notag\\
		&
		\leq 
		C_{\mu_m} h^{-1} \, C_{\nu_m} h
		=
		C_{\mu_m, \nu_m} . \notag
	\end{align}
	From the estimates above and the construction of $\hat Z^2$, we obtain the following size relations of (semi-)norms
	\begin{align}
		\| (\Phi^m|_K - {\rm id}_K)\circ F_K \|_{L^2(\hat K)}
		&\leq
		C \| (a^m|_{e_K} - {\rm id}_{e_K}) \circ F_K|_{\hat e_K} \|_{L^2(\hat e_{K})} 
		,
		\notag\\
		\| \nabla_K(\Phi^m|_K - {\rm id}_K)\circ F_K \|_{L^2(\hat K)}
		&\leq
		C_{\mu_m, \nu_m} \| 
		\nabla_{e_K}(a^m|_{e_K} - {\rm id}_{e_K}) \circ F_K |_{\hat e_K} \|_{L^2(\hat e_{K})}
		. \notag
	\end{align}
	Transforming back from $\hat K$ to $K$ and summing up over all $K\in\mathcal T_1(\Ohsm)$ yield
	\begin{align}
		\| \Phi^m - {\rm id}_\Ohsm \|_{L^2(\mathcal T_1(\Ohsm))}^2
		&\leq
		C_{\mu_m}
		\sum_{K\in \mathcal T_1(\Ohsm)}
		h^3
		\| (\Phi^m|_K - {\rm id}_K) \circ F_K \|_{L^2(\hat K)}^2
		\notag\\
		&\leq
		C_{\mu_m}
		\sum_{K\in \mathcal T_1(\Ohsm)}
		h^3
		\| (a^m|_{e_K} - {\rm id}_{e_K}) \circ F_K|_{\hat e_K} \|_{L^2(\hat e_{K})}^2
		\notag\\
		&\leq
		C_{\mum, \num}
		\sum_{K\in \mathcal T_1(\Ohsm)}
		h
		\| a^m|_{e_K} - {\rm id}_{e_K} \|_{L^2( e_{K})}^2
		\notag\\
		&=
		C_{\mum, \num}
		h
		\| (1-I_h)a^m|_\Ghsm \|_{L^2(\Ghsm)}^2 .
		\notag
	\end{align}
	The same argument applies to the $H^1$ semi-norm. Thus, we obtain
	\begin{align}
		\| \Phi^m - {\rm id}_\Ohsm \|_{L^2(\mathcal T_1(\Ohsm))}
		&\leq
		C_{\mum, \num}
		h^{1/2}
		\| (1-I_h)a^m|_\Ghsm \|_{L^2(\Ghsm)} 
		,
		\notag\\
		\| \nabla_\Ohsm (\Phi^m-{\rm id}_\Ohsm) \|_{L^2(\mathcal T_1(\Ohsm))}
		&\leq
		C_{\mum, \num}
		h^{1/2}
		\| \nabla_\Ghsm (1-I_h)a^m|_\Ghsm \|_{L^2(\Ghsm)} . \notag
	\end{align}
	If $K\in \mathcal T_2(\Ohsm)$, the only difference is the scaling process. In this case,  since $\hat e_K$ is only one dimensional, we get additional powers of $h$ from scaling:
	\begin{align}
		&\| \Phi^m - {\rm id}_\Ohsm \|_{L^2(\mathcal T_2(\Ohsm))}
		%		\notag\\
		\leq
		C_{\mum, \num}
		h
		\| (1-I_h)a^m|_\Ghsm \|_{L^2(\mathcal E(\Ghsm))} 
		\notag\\
		&\leq
		C_{\mum, \num}
		h^{1/2}
		\| (1-I_h)a^m|_\Ghsm \|_{L^2(\Ghsm)} 
		+
		C_{\mum, \num}
		h^{3/2}
		\| \nabla_\Ghsm (1-I_h)a^m|_\Ghsm \|_{L^2(\Ghsm)} 
		\notag ,
	\end{align}
	and
	\begin{align}
		&\| \nabla_\Ohsm (\Phi^m-{\rm id}_\Ohsm) \|_{L^2(\mathcal T_2(\Ohsm))}
		%		\notag\\
		%		&
		\leq
		C_{\mum, \num}
		h
		\| \nabla_\Ghsm (1-I_h)a^m|_\Ghsm \|_{L^2(\mathcal E(\Ghsm))}
		\notag\\
		&\leq
		C_{\mum, \num}
		h^{1/2}
		\| \nabla_\Ghsm (1-I_h)a^m|_\Ghsm \|_{L^2(\Ghsm)}
		+
		C_{\mum, \num}
		h^{3/2}
		\| \nabla_\Ghsm^2 (1-I_h)a^m|_\Ghsm \|_{L^2(\Ghsm)} \notag,
	\end{align}
	where $\mathcal E(\Ghsm)$ is the edge set of $\Ghsm$ and we have used an iso-parametric version of the local trace inequality (cf. \cite[Eq. (10.3.8)]{Brenner08}). 
	
	The proof is complete in view of the fact that $\Phi^m - {\rm id}_\Ohsm = 0$ on any $K\in\mathcal T_3(\Ohsm)$. 
\end{proof}
Similarly, using the inverse inequality, we can show
\begin{align}\label{eq:Phi-approx2}
	\| \Phi^m-{\rm id}_\Ohsm \|_{W^{1,\infty}(\Ohsm)}
	&\leq
	C_{\mum, \num}
	\| (1 - I_h) a^m|_\Ghsm \|_{W^{1,\infty}(\Ghsm)}
	\notag\\
	&\leq
	C_{\mum, \num}
	(1 + \nusm)
	h^{k+1/2-d/2}
	\| \hat X_{h,*}^m \|_{H_h^{k}(\Ghsf)}
	.
	%	\quad\forall s\leq k+1,
\end{align}
A straightforward adaptation of the proof of Lemma \ref{lemma:Phi-approx} (also see \cite[Lemma 5]{Lenoir86}) leads to the following high-order approximation estimates on the initial mesh $\Ohso$:
\begin{align}
	\| (\Phi^m-{\rm id}_\Ohsm)\circ \hat X_{h,*}^m \|_{H_h^s(\Ohso)}
	&\leq
	C_{\mum, \num} (1 + \nusm)
	h^{k+3/2-s}
	\| \hat X_{h,*}^m \|_{H_h^{k}(\Ghsf)} ,
	\label{eq:Phi-approx4} \\
	\| (\Phi^m-{\rm id}_\Ohsm)\circ \hat X_{h,*}^m \|_{W_h^{s,\infty}(\Ohso)}
	&\leq
	C_{\mum, \num} (1 + \nusm)
	h^{k+3/2-d/2-s}
	\| \hat X_{h,*}^m \|_{H_h^{k}(\Ghsf)}
	, \label{eq:Phi-approx5} 
\end{align}
for all $s=0,\cdots, k+1$.

By construction, the restriction of Lenoir's lift map $\Phi^m|_\Ghsm=a^m|_\Ghsm:\Ghsm\rightarrow\Gm$ coincides with the inverse lift operator $^{-\ell}$ defined in the standard parametric finite element theory (cf. \cite[Section 3.4]{KLL19}). To be more precise, for any function $f$ defined on $\Gm$, we have
\begin{align}
	%	\label{eq:Phi-a}
	f\circ \Phi^m|_\Ghsm = f^{-\ell}
	:= f \circ a^m|_\Ghsm .
	\notag
\end{align}
The lift operator $\ell$ for parametric finite element can be defined similarly: For any function $f$ on $\Ghsm$,
\begin{align}
	f^{\ell}
	:= f \circ \big(a^m|_\Ghsm\big)^{-1},
	\notag
\end{align}
where $\big(a^m|_\Ghsm\big)^{-1}$ is well-defined if $\Ghsm$ is sufficiently close to $\Gm$ which is true given the induction hypothesis later in \eqref{eq:ind-hypo}.

% {\r lifted finite element spaces}

\subsection{Geometric perturbation estimates}

By a change of variables,  the following elementary lemmas quantifying the interface lift perturbation errors for bilinear forms are standard (cf. \cite[Lemma 4.2]{BL24FOCM} and \cite[Lemma 5.6]{Kov18}). 
% {\r ???norm equivalence, don't need}
\begin{lemma}\label{lemma:geo-perturb}
	Given $f_1,f_2\in H^{1}(\Ghsm)$ and their lifts $f_1^\ell,f_2^\ell\in H^{1}(\Gm)$, the following interface geometric perturbation estimates hold for all $(p, q)$ satisfying $1/p+1/q=1/2$:
	\begin{align*}
		&\Big|\int_{\Ghsm} f_1f_2 - \int_{\Gm} f_1^\ell f_2^\ell \Big| 
		\leq 
		C_\num (1 + \nusm)
		h^{k+1} \|f_1\|_{ L^p(\Ghsm)}\|f_2\|_{L^q(\Ghsm)}
		,
		\\
		&
		\Big|\int_{\Ghsm}\nabla_{\Ghsm} f_1\cdot\nabla_{\Ghsm} f_2 - \int_{\Gm}\nabla_{\Gm} f_1^\ell \cdot \nabla_{\Gm} f_2^\ell \Big| \notag\\
		&\qquad\qquad\qquad
		% 		\qquad\qquad\qquad\qquad 
		\leq 
		C_\num (1 + \nusm) h^{k+1}\|\nabla_{\Ghsm}f_1\|_{ L^p(\Ghsm)}\|\nabla_{\Ghsm}f_2\|_{L^q(\Ghsm)} ,
	\end{align*}
%	\begin{align*}
%		&\Big|\int_{\Ghsm}\nabla_{\Ghsm} f_1\cdot\nabla_{\Ghsm} f_2 - \int_{\Gm}\nabla_{\Gm} f_1^\ell \cdot \nabla_{\Gm} f_2^\ell \Big| \notag\\
%		&
%		% 		\qquad\qquad\qquad\qquad 
%		\leq 
%		C_\num (1 + \nusm) h^{k+1}\|\nabla_{\Ghsm}f_1\|_{ L^p(\Ghsm)}\|\nabla_{\Ghsm}f_2\|_{L^q(\Ghsm)} ,
%	\end{align*}
	and
	\begin{align*}
		&\Big|\int_{\Ghsm}\nabla_{\Ghsm} f_1 \, f_2 - \int_{\Gm}\nabla_{\Gm} f_1^\ell \, f_2^\ell \Big| \notag\\
		&
		% 		\qquad\qquad\qquad\qquad 
		\qquad\leq 
		C_\num (1 + \nusm) h^{k}\|\nabla_{\Ghsm}f_1\|_{ L^p(\Ghsm)}\| f_2 \|_{L^q(\Ghsm)} .
	\end{align*}
\end{lemma}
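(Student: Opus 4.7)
The plan is to perform a change of variables via the lift map $a|_\Ghsm:\Ghsm\to\Gm$, reducing all three integrals to integrals over $\Ghsm$, and then to control the resulting pointwise perturbation factors via the interpolation property of the iso-parametric interface $\Ghsm$. Throughout I would write the geometric constants in the form $C_\num(1+\nusm)$ by applying Lemma \ref{lemma:Ih} to the parametrization $\hat X_{h,*}^m$, whose $H_h^k$-norm is controlled by $\nusm$ via \eqref{eq:mu-nu-equiv}.

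For the first (mass-type) estimate, I would set $J$ to be the surface area Jacobian of $a|_\Ghsm$, so that $\int_\Gm f_1^\ell f_2^\ell = \int_\Ghsm f_1 f_2 J$, and the difference becomes $\int_\Ghsm f_1 f_2 (1-J)$. Since $\Ghsm$ is an order-$k$ iso-parametric interpolation of $\Gm$ matched at the nodes, Lemma \ref{lemma:Ih} applied to the parametrization of $\Ghsm$ gives the pointwise bound $\|1-J\|_{L^\infty(\Ghsm)} \leq C_\num(1+\nusm)h^{k+1}$. Hölder's inequality with exponents $(p,q)$, $1/p+1/q=1/2$, then immediately produces the claimed bound.

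For the two gradient-type estimates, I would use the chain rule to express $(\nabla_\Gm f^\ell)\circ a$ in terms of $\nabla_\Ghsm f$ via a tangential transformation matrix $G_h$ essentially of the form $J(D\tilde a)^{-T}(I-n^\ell(n^\ell)^T)(D\tilde a)^{-1}$, where $\tilde a$ is a smooth extension of $a|_\Ghsm$. For the purely gradient-gradient pairing, the key observation is the classical Dziuk-type cancellation: since both factors $\nabla_\Ghsm f_1,\nabla_\Ghsm f_2$ are tangent to $\Ghsm$ and the $O(h^k)$ part of $G_h-I$ lives essentially in the normal direction, one obtains the tangential bound $\|P_\Ghsm(G_h-I)P_\Ghsm\|_{L^\infty(\Ghsm)} \leq C_\num(1+\nusm)h^{k+1}$, giving the second inequality. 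In the mixed case only one factor is tangential, the cancellation is one-sided, and one only recovers $\|P_\Ghsm(G_h-I)\|_{L^\infty(\Ghsm)}\leq C_\num(1+\nusm)h^k$, which explains the loss of one power of $h$ in the third estimate.

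The main obstacle here is not the overall strategy, which is classical for parametric surface finite elements and is essentially the content of \cite[Lemma 4.2]{BL24FOCM} and \cite[Lemma 5.6]{Kov18}, but the careful bookkeeping of the iso-parametric geometric constants. I must verify in particular that the Dziuk-type tangential cancellation argument goes through on curved elements of order $k$, that no factor involving the bulk shape regularity $\mum,\musm$ creeps in (only the surface constants $\num,\nusm$ should appear), and that the gain of $h^{k+1}$ (respectively $h^k$) survives intact when the transformation matrix $G_h$ is expanded term by term using Lemma \ref{lemma:Ih} applied to the parametrization $\hat X_{h,*}^m$.
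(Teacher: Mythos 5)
The paper states this lemma without proof, treating it as standard (``By change of variable, the following elementary lemmas... are standard'') and deferring to \cite[Lemma 4.2]{BL24FOCM} and \cite[Lemma 5.6]{Kov18}. Your strategy---pulling back via $a|_\Ghsm$, bounding the area Jacobian error by $C_\num(1+\nusm)h^{k+1}$ via the interpolation property of the iso-parametric interface, and invoking the Dziuk-type tangential cancellation for the gradient transformation matrix (full two-sided cancellation giving $h^{k+1}$ for the gradient-gradient pairing, one-sided cancellation giving only $h^k$ for the mixed term because the discrete and exact tangent planes differ by $O(h^k)$)---is precisely the argument those cited references carry out, so the approaches agree.
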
 
The following perturbation estimates of bulk domains are a direct consequence of Lemma \ref{lemma:Phi-approx}. The proof is standard and is therefore omitted.
\begin{lemma}\label{lemma:geo-perturb-b}
	Given $g_1,g_2\in H^{1}(\Om)$ and their lifts $g_1\circ\Phi^m,g_2\circ\Phi^m\in H^{1}(\Ohsm)$, the following bulk geometric perturbation estimates hold for all $(p, q)$ satisfying $1/p+1/q=1/2$:
	\begin{align*}
		&\Big|\int_{\Ohsm} g_1\circ\Phi^m g_2\circ\Phi^m - \int_{\Om} g_1 g_2 \Big| 
		\notag\\
		&\qquad\leq 
		C_{\mum, \num} (1 + \nusm)
		h^{k+1/2} \| g_1 \circ\Phi^m \|_{ L^p(\Ohsm)}\|g_2 \circ\Phi^m \|_{L^q(\Ohsm)}
		,
		\\
		&\Big|\int_{\Ohsm} \D_\Ohsm (g_1\circ\Phi^m) \cdot D_\Ohsm (g_2\circ\Phi^m) - \int_{\Om} \D_{\Om} g_1 \cdot \D_{\Om} g_2 \Big| 
		\notag\\
		&\qquad\leq 
		C_{\mum, \num} (1 + \nusm)
		h^{k+1/2} \| \nabla_\Ohsm (g_1\circ\Phi^m)\|_{ L^{p}(\Ohsm)}\| \nabla_\Ohsm (g_2\circ\Phi^m)\|_{L^q(\Ohsm)} ,
	\end{align*}
%	\begin{align*}
%		&\Big|\int_{\Ohsm} \D_\Ohsm (g_1\circ\Phi^m) \cdot D_\Ohsm (g_2\circ\Phi^m) - \int_{\Om} \D_{\Om} g_1 \cdot \D_{\Om} g_2 \Big| 
%		\notag\\
%		&\leq 
%		C_{\mum, \num} (1 + \nusm)
%		h^{k+1/2} \| \nabla_\Ohsm (g_1\circ\Phi^m)\|_{ L^{p}(\Ohsm)}\| \nabla_\Ohsm (g_2\circ\Phi^m)\|_{L^q(\Ohsm)}
%		,
%	\end{align*}
	and
	\begin{align*}
		&\Big|\int_{\Ohsm} \nabla_\Ohsm (g_1\circ\Phi^m)\, (g_2\circ\Phi^m) - \int_{\Om} \nabla_{\Om} g_1 \, g_2 \Big| 
		\notag\\
		&\qquad\leq 
		C_{\mum, \num} (1 + \nusm)
		h^{k+1/2} \| \nabla_\Ohsm (g_1\circ\Phi^m)\|_{ L^{p}(\Ohsm)}\|g_2\circ\Phi^m\|_{L^q(\Ohsm)} .
	\end{align*}
	% 	and
	% 	\begin{align*}
		% 		&\Big|\int_{\Ghsm}\nabla_{\Ghsm} f_1\cdot\nabla_{\Ghsm} f_2 - \int_{\Gm}\nabla_{\Gm} f_1^{\ell}\cdot \nabla_{\Gm} f_2^{\ell} \Big| \notag\\
		% 		&\qquad\qquad\qquad\qquad 
		% 		\leq 
		% 		C_\km (1 + \ksm) h^{k+1}\|\nabla_{\Ghsm}f_1\|_{ L^\infty(\Ghsm)}\|\nabla_{\Ghsm}f_2\|_{L^2(\Ghsm)} .
		% 	\end{align*}
\end{lemma}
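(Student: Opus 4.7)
The plan is to prove Lemma~\ref{lemma:geo-perturb-b} by a piecewise change-of-variables argument along the lift $\Phi^m:\Ohsm\to\Om$. The crucial observation is that, by the construction recalled in Section~\ref{sec:Lenoir}, $\Phi^m$ equals the identity on every element of $\mathcal T_3(\Ohsm)$, so the entire discrepancy between $\int_\Om$ and $\int_\Ohsm$ is localized to the boundary strip $\mathcal T_1(\Ohsm)\cup\mathcal T_2(\Ohsm)$, on which $\Phi^m$ is smooth and invertible. On this strip the element-wise change of variables yields
\begin{align*}
\int_{\Om} g_1 g_2
&= \int_\Ohsm (g_1\circ\Phi^m)(g_2\circ\Phi^m)\,|\det D\Phi^m|,
\end{align*}
so the first discrepancy equals $\int_\Ohsm (g_1\circ\Phi^m)(g_2\circ\Phi^m)(1-|\det D\Phi^m|)$. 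H\"older with $1/p+1/q+1/2=1$ then reduces the problem to estimating $\|1-|\det D\Phi^m|\|_{L^2(\Ohsm)}$.

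To control this Jacobian defect, I would write $D\Phi^m=I+D(\Phi^m-{\rm id})$ and expand $\det(I+A)-1=\mathrm{tr}(A)+Q(A)$, where $Q$ collects polynomial terms of degree $\ge 2$ in the entries of $A$. The pointwise $W^{1,\infty}$ estimate \eqref{eq:Phi-approx2} gives $\|\nabla(\Phi^m-{\rm id})\|_{L^\infty(\Ohsm)}\leq C_{\mum,\num}(1+\nusm)h^{k+1/2-d/2}$, which is $o(1)$ for $k\ge 3$ and $h$ small. This allows the crude pointwise domination $|Q(D(\Phi^m-{\rm id}))|\lesssim |\nabla(\Phi^m-{\rm id})|$, so that the linear term dominates and
\begin{align*}
\|1-|\det D\Phi^m|\|_{L^2(\Ohsm)}
&\leq C\,\|\nabla(\Phi^m-{\rm id}_\Ohsm)\|_{L^2(\Ohsm)}
\leq C_{\mum,\num}(1+\nusm)\,h^{k+1/2}\,\|\hat X_{h,*}^m\|_{H^k_h(\Ghsf)}
\end{align*}
by Lemma~\ref{lemma:Phi-approx}. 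Combined with H\"older, this closes the first bound.

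For the gradient bilinear form, the chain rule gives $(\nabla_\Om g_i)\circ\Phi^m=\nabla_\Ohsm(g_i\circ\Phi^m)(D\Phi^m)^{-1}$, and after symmetrization
\begin{align*}
\int_{\Om} \D_\Om g_1\cdot \D_\Om g_2
&= \int_\Ohsm \mathcal J(x):\bigl(\D_\Ohsm(g_1\circ\Phi^m)\otimes \D_\Ohsm(g_2\circ\Phi^m)\bigr)(x),
\end{align*}
where $\mathcal J$ is a rank-four tensor built algebraically from $(D\Phi^m)^{-1}$ and $|\det D\Phi^m|$, and $\mathcal J$ reduces to the identity on symmetric matrices when $D\Phi^m=I$. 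The Neumann expansion $(I+D(\Phi^m-{\rm id}))^{-1}-I=-D(\Phi^m-{\rm id})+\cdots$ converges in $L^\infty$ by \eqref{eq:Phi-approx2}, so combined with the determinant expansion above one obtains the pointwise bound $|\mathcal J-\mathrm{id}|\lesssim |\nabla(\Phi^m-{\rm id})|$, hence $\|\mathcal J-\mathrm{id}\|_{L^2(\Ohsm)}\leq C_{\mum,\num}(1+\nusm)h^{k+1/2}\|\hat X_{h,*}^m\|_{H^k_h(\Ghsf)}$. One more application of H\"older with exponents $(p,q,2)$ finishes the second bound. The third estimate is strictly analogous with the tensor $\mathcal J=(D\Phi^m)^{-1}|\det D\Phi^m|$ replacing the symmetrized rank-four object; the $L^2$ control is identical.

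The main obstacle is really a bookkeeping issue: ensuring the \emph{quadratic and higher} terms from $\det(I+A)-1$ and from $(I+A)^{-1}-I$ do not degrade the $h^{k+1/2}$ rate. This hinges on the $W^{1,\infty}$ bound \eqref{eq:Phi-approx2} being $o(1)$, which in turn requires $k+1/2-d/2\ge 0$; for $d\le 3$ and $k\ge 3$ this is automatic, so the nonlinear remainders are absorbed into the linear term and the $h^{k+1/2}$ rate from Lemma~\ref{lemma:Phi-approx} passes cleanly through to all three estimates.
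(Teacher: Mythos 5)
Your proof is correct and spells out precisely the ``standard'' change-of-variables argument the paper alludes to when it omits the proof as ``a direct consequence of Lemma~\ref{lemma:Phi-approx}.'' Localizing the discrepancy to $\mathcal T_1(\Ohsm)\cup\mathcal T_2(\Ohsm)$ (where $\Phi^m\ne\mathrm{id}$), pulling back via $\Phi^m$, controlling the Jacobian and inverse-Jacobian defects linearly in $\nabla(\Phi^m-\mathrm{id})$ using the $W^{1,\infty}$ smallness \eqref{eq:Phi-approx2}, and then applying H\"older with exponents $(p,q,2)$ together with the $H^1$ bound of Lemma~\ref{lemma:Phi-approx} is exactly the intended route.
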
 
%Analogous to \cite[Lemma 4.1]{KLL17}, we have the following estimates for bilinear forms.
% \begin{lemma}\label{lemma:e-blinear}
	% 	For all finite element functions $f_h,g_h\in S_h$, it holds that
	% 	\begin{align*}
		% 		\Big|\int_{\Ghm} f_h g_h - \int_{\Ghsm} f_h g_h \Big| 
		% 		&\leq 
		% 		C_\km
		% 		\| \nabla_\Ghsm \ehm \|_{L^2(\Ghsm)} \|f_h\|_{ L^\infty(\Ghsm)}\|g_h\|_{L^2(\Ghsm)}
		% 	\end{align*}
	% 	and
	% 	\begin{align*}
		% 		&\Big|\int_{\Ghm}\nabla_{\Gm} f_h\cdot\nabla_{\Gm} g_h - \int_{\Ghsm}\nabla_{\Ghsm} f_h \cdot \nabla_{\Ghsm} g_h \Big| \notag\\
		% 		&\qquad\qquad\qquad\qquad 
		% 		\leq 
		% 		C_\km \| \nabla_\Ghsm \ehm \|_{L^2(\Ghsm)} \|\nabla_{\Ghsm}f_h\|_{ L^\infty(\Ghsm)}\|\nabla_{\Ghsm}g_h\|_{L^2(\Ghsm)} .
		% 	\end{align*}
	% \end{lemma} 

%\begin{remark}\upshape
%	As a result of super-approximation,
%	the interface perturbation errors (Lemma \ref{lemma:geo-perturb}) are smaller than the bulk perturbation errors (Lemma \ref{lemma:geo-perturb-b}) by a factor $h$.
%\end{remark}

\subsection{Consistency error estimates}\label{sec:cons_err1}
Based on $\Ohsm$ constructed in Section \ref{sec:bulk-mesh}, the consistency equations associated to the scheme \eqref{subeq:2phase-h} are defined as follows:
\begin{subequations}\label{subeq:2phase-hs}
	\begin{align}
		\int_{\Ohsm} 2\nu \D_\Ohsm I_h(u^m\circ\Phi^m) \cdot \D_\Ohsm \phi_h
		&=
		\int_\Ohsm \bar I_h (p^m\circ\Phi^m) \nabla_\Ohsm \cdot \phi_h
		\notag\\
		&\quad
		- 
		\int_\Ghsm I_h (H^m n^m)^{-\ell} \cdot \phi_h
		+
		d_u^m(\phi_h)
		\label{eq:2phase-hs1} , \\
		\int_\Ohsm \nabla_\Ohsm \cdot I_h(u^m\circ\Phi^m) \psi_h &= d_p^m(\psi_h)  , \label{eq:2phase-hs2}
		\\
		\int_\Ghsm \frac{X_{h,*}^{m+1} - \hat X_{h,*}^m}{\tau} \cdot \chi_h &= \int_\Ghsm I_h (u^m|_\Gm)^{-\ell} \cdot \chi_h 
		+
		d_x^m(\chi_h) \label{eq:2phase-hs3},
		\\
		\int_\Ghsm \nabla_\Ghsm X_{h,*}^{m+1} \cdot \nabla_\Ghsm \eta_h 
		&=
		\int_\Ghsm I_h (H^m n^m)^{-\ell} \cdot \eta_h
		+
		d_\kappa^m(\eta_h)
		, \label{eq:2phase-hs4}
	\end{align}
	where the averaged interpolation $\bar I_h (p^m\circ\Phi^m) := I_h (p^m\circ\Phi^m) - \frac{1}{|\Ohsm|} \int_\Ohsm I_h (p^m\circ\Phi^m)\in Q_h^{k-1}(\Ohsm)$.
\end{subequations}

Subtracting \eqref{subeq:2phase-hs} from \eqref{subeq:2phase}, we get
\begin{align}\label{eq:du}
	d_u^m(\phi_h)
	&=
	\int_{\Ohsm} 2\nu \D_\Ohsm I_h(u^m\circ\Phi^m) \cdot \D_\Ohsm \phi_h
	-
	\int_{\Om} 2\nu \D_\Om u^m \cdot \D_\Om \phi_h^\ell
	\notag\\
	&\quad
	-
	\int_\Ohsm \bar I_h (p^m\circ\Phi^m) \nabla_\Ohsm \cdot \phi_h
	+
	\int_\Om p^m \nabla_\Om\cdot \phi_h^\ell
	\notag\\
	&\quad
	+
	\int_\Ghsm I_h (H^m n^m)^{-\ell} \cdot \phi_h
	-
	\int_\Gm H^m n^m \cdot \phi_h^\ell
	\notag\\
	&=:
	d_u^{m,1}(\phi_h)
	+ d_u^{m,2}(\phi_h)
	+d_u^{m,3}(\phi_h),
	\\
	d_p^m(\psi_h)
	&=
	\int_\Ohsm \nabla_\Ohsm \cdot I_h(u^m\circ\Phi^m) \psi_h
	-
	\int_\Om \nabla_\Om \cdot u^m \psi_h^\ell , \label{eq:dphi}
	\\
	d_x^m(\chi_h)
	&=
	\int_\Ghsm \frac{X_{h,*}^{m+1} - \hat X_{h,*}^m}{\tau} \cdot \chi_h
	-
	\int_\Ghsm I_h (u^m|_\Gm)^{-\ell} \cdot \chi_h , \label{eq:dphi}
\end{align}
%\begin{align}\label{eq:dphi}
%	d_p^m(\psi_h)
%	&=
%	\int_\Ohsm \nabla_\Ohsm \cdot I_h(u^m\circ\Phi^m) \psi_h
%	-
%	\int_\Om \nabla_\Om \cdot u^m \psi_h^\ell ,
%\end{align}
%\begin{align}\label{eq:dchi}
%	d_x^m(\chi_h)
%	=
%	\int_\Ghsm \frac{X_{h,*}^{m+1} - \hat X_{h,*}^m}{\tau} \cdot \chi_h
%	-
%	\int_\Ghsm I_h (u^m|_\Gm)^{-\ell} \cdot \chi_h ,
%\end{align}
and
\begin{align}\label{eq:deta}
	d_\kappa^m(\eta_h)
	&=
	-\int_\Ghsm I_h (H^m n^m)^{-\ell} \cdot \eta_h
	+
	\int_\Gm H^m n^m \cdot \eta_h^\ell
	\notag\\
	&\quad
	+
	\int_\Ghsm \nabla_\Ghsm X_{h,*}^{m+1} \cdot \nabla_\Ghsm \eta_h 
	-
	\int_\Gm \nabla_\Gm {\rm id} \cdot \nabla_\Gm \eta_h^\ell 
	\notag\\
	&=:
	d_\kappa^{m,1}(\eta_h ) + d_\kappa^{m,2}(\eta_h ) .
\end{align}
%Using the Taylor expansion (cf. \eqref{eq:X-id1}--\eqref{W1infty-g}), Lemma \ref{lemma:geo-perturb} and \ref{lemma:geo-perturb-b} and the standard interpolation error estimate, the consistency error estimates are straightforward.
The estimates for the $d^m$-terms are given in the lemma below.
\begin{lemma}\label{lemma:dm}
	The consistency errors defined in \eqref{eq:du}--\eqref{eq:deta} satisfy
	\begin{align}
		| d_u^m (\phi_h)|
		&\leq
		\big(C_{\mum}
		(1+\musm)h^k + C_{\mum, \num}
		(1+\nusm) h^{k+1/2} \big) \| \phi_h \|_{H^1(\Ohsm)} ,
		\notag\\
		| d_p^m (\psi_h)|
		&\leq
		\big(C_{\mum}
		(1+\musm)h^{k} + C_{\mum, \num}
		(1+\nusm) h^{k+1/2} \big) \| \psi_h \|_{L^2(\Ohsm)} ,
		\notag\\
		| d_x^m (\chi_h)|
		&\leq
		C_\num
		\tau \| \chi_h \|_{H^{-1}(\Ghsm)} ,
		\notag\\
		| d_\kappa^m (\eta_h)|
		&\leq
		C_\num
		(1+\nusm)h^{k+\gamma(d)} \| \nabla_\Ghsm \eta_h \|_{L^2(\Ghsm)} 
		+
		C_\num
		\tau \| \eta_h \|_{L^2(\Ghsm)}
		, \notag
	\end{align}
	where $\gamma(2)=1$ and $\gamma(3)=0$.
\end{lemma}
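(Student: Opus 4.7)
The plan is to estimate each consistency functional by decomposing it into a Lagrange interpolation error on the discrete domain (controlled by Lemma \ref{lemma:Ih}) and a geometric perturbation error that compares integration over the discrete mesh with integration over the exact smooth domain (controlled by Lemmas \ref{lemma:geo-perturb} and \ref{lemma:geo-perturb-b}). In each case I insert an intermediate integral built from the exact continuous solution \emph{evaluated on the discrete domain}, so that the difference with the first term is a Lagrange interpolation error and the difference with the second term is a geometric perturbation; the two pieces are then estimated independently.

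For $d_u^m$ I treat the three summands of \eqref{eq:du} separately. For $d_u^{m,1}$, inserting $\pm\int_\Ohsm 2\nu\,\D_\Ohsm(u^m\circ\Phi^m)\cdot\D_\Ohsm\phi_h$ isolates a Lagrange error of the $W^{k+1,\infty}$ function $u^m\circ\Phi^m$, bounded by $C_{\mum}(1+\musm)h^k\|\phi_h\|_{H^1(\Ohsm)}$ via Lemma \ref{lemma:Ih} (the chain rule together with \eqref{eq:Phi-approx3} transfers the regularity of $u^m$ on $\Om$ to $u^m\circ\Phi^m$ on $\Ohsm$) and a bulk geometric perturbation bounded by $C_{\mum,\num}(1+\nusm)h^{k+1/2}\|\phi_h\|_{H^1(\Ohsm)}$ via the second inequality of Lemma \ref{lemma:geo-perturb-b}. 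The same insertion strategy handles $d_u^{m,2}$ and $d_p^m$, where the averaged interpolation $\bar I_h$ is harmless because the additive constant is annihilated by the integration-by-parts identity on $\phi_h$. For $d_u^{m,3}$, splitting between $\Ghsm$ and $\Gm$ gives a surface interpolation error and a surface perturbation, both of order $h^{k+1}\|\phi_h\|_{L^2(\Ghsm)}$ via Lemmas \ref{lemma:Ih} and \ref{lemma:geo-perturb}; the $H^1(\Ohsm)\to L^2(\Ghsm)$ trace inequality then converts this into a bulk $H^1$ bound that is absorbed into the stated estimate.

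For $d_x^m$, I substitute \eqref{eq:X-id1}--\eqref{eq:X-id2} directly: $X_{h,*}^{m+1}-\hat X_{h,*}^m=\tau I_h(u^m+g^m)^{-\ell}$, so $d_x^m(\chi_h)=\int_\Ghsm I_h(g^m)^{-\ell}\cdot\chi_h$. Since \eqref{W1infty-g} and $H^1$-stability of the Lagrange interpolation on $\Ghsm$ yield $\|I_h(g^m)^{-\ell}\|_{H^1(\Ghsm)}\le C\tau$, $H^1$--$H^{-1}$ duality delivers $|d_x^m(\chi_h)|\le C_\num\tau\|\chi_h\|_{H^{-1}(\Ghsm)}$. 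For $d_\kappa^m$ I invoke the exact mean-curvature identity $\int_\Gm Hn\cdot\eta_h^\ell=\int_\Gm\nabla_\Gm{\rm id}\cdot\nabla_\Gm\eta_h^\ell$ (a consequence of \eqref{eq:2phase4}) and insert $\pm\int_\Ghsm\nabla_\Ghsm\hat X_{h,*}^m\cdot\nabla_\Ghsm\eta_h$. This decomposes $d_\kappa^m$ into: (i) a temporal piece $\int_\Ghsm\nabla_\Ghsm(X_{h,*}^{m+1}-\hat X_{h,*}^m)\cdot\nabla_\Ghsm\eta_h$, which after integration by parts on the closed surface $\Ghsm$ (applied to the smooth factor $(u^m+g^m)^{-\ell}$ before the $I_h$) and use of \eqref{W1infty-g} becomes a $C\tau\|\eta_h\|_{L^2(\Ghsm)}$ contribution; (ii) a stiffness geometric perturbation bounded by $h^{k+1}\|\nabla_\Ghsm\eta_h\|_{L^2(\Ghsm)}$ via the second inequality of Lemma \ref{lemma:geo-perturb}; and (iii) an $Hn$-pairing error, naturally of order $h^{k+1}$ in $L^2$-duality, that must be converted into the $\|\nabla_\Ghsm\eta_h\|_{L^2}$-scale bound.

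The main technical obstacle is the conversion in (iii), which is the source of the dimension-dependent factor $\gamma(d)$. In $d=2$ the interface $\Ghsm$ is one-dimensional and admits $H^1\hookrightarrow L^\infty$ with the natural scaling, so the $h^{k+1}\|\eta_h\|_{L^2}$ estimate is preserved in the $\|\nabla_\Ghsm\eta_h\|_{L^2}$ form, giving $\gamma(2)=1$; in $d=3$ only a weaker surface embedding is available and an inverse-type estimate forces the loss of one factor of $h$, giving $\gamma(3)=0$. A secondary but equally crucial ingredient is the new Lenoir-lift estimate in Lemma \ref{lemma:Phi-approx}, whose sharper $h^{k+1/2}$ bound on $\Phi^m-{\rm id}_\Ohsm$ is what feeds into Lemma \ref{lemma:geo-perturb-b} and produces the $h^{k+1/2}$ in the bulk estimates of $d_u^{m,1}$, $d_u^{m,2}$, and $d_p^m$; without this extra half-order the subsequent closure of the coupled parabolicity estimate would fail.
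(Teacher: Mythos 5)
Your treatment of $d_u^m$, $d_p^m$, and $d_x^m$ follows essentially the same decomposition the paper uses: interpolation error on the discrete domain (Lemma \ref{lemma:Ih}) plus geometric perturbation (Lemmas \ref{lemma:geo-perturb} and \ref{lemma:geo-perturb-b}, with the new $h^{k+1/2}$ gain from Lemma \ref{lemma:Phi-approx} entering the bulk bounds), and for $d_x^m$ the cancellation via \eqref{eq:X-id1}--\eqref{eq:X-id2} reducing to the Taylor remainder $g^m$. That part is correct.

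However, your analysis of $d_\kappa^m$ contains a genuine error in locating the source of the dimension-dependent exponent $\gamma(d)$. You attribute it to item (iii), the $Hn$-pairing error, and claim it arises from a Sobolev embedding of $H^1$ into $L^\infty$ and an inverse estimate. But the $Hn$-pairing term is exactly $d_\kappa^{m,1}$, which has the same structure as $d_u^{m,3}$ and is already bounded by $C_\num(1+\nusm)h^{k+1}\|\eta_h\|_{H^1(\Ghsm)}$ with no dimension dependence at all. The dimension-dependent loss is in your item (ii), the stiffness perturbation $\int_\Ghsm\nabla_\Ghsm \hat X_{h,*}^m\cdot\nabla_\Ghsm\eta_h - \int_\Gm\nabla_\Gm{\rm id}\cdot\nabla_\Gm\eta_h^\ell$, and you cannot bound it by a clean application of the second inequality of Lemma \ref{lemma:geo-perturb}: taking $f_1={\rm id}_\Ghsm$ there gives the comparison integral over $\Gm$ with $f_1^\ell=({\rm id}_\Ghsm)^\ell$, which is \emph{not} ${\rm id}_\Gm$ — their difference is the $O(h^{k+1})$ displacement of the discrete interface, and differentiating it costs a factor of $h$, leaving only $O(h^k)$ in general. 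This is precisely the $d_\kappa^{m,2}$ treated in the paper via the standard result \cite[Eq. (4.3)]{BL24FOCM}, giving $h^k$; the improvement to $h^{k+1}$ in $d=2$ does not come from an embedding argument but from the Gauss--Lobatto super-approximation result Lemma \ref{Lemma-GLW} (a consequence of Lemma \ref{lemma:super_conv2}), which is specific to one-dimensional interfaces and has no analogue for $d=3$. As stated, your argument would incorrectly yield $h^{k+1}$ in both dimensions and cannot reproduce $\gamma(3)=0$.
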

\begin{proof}
	%	From interpolation error estimate (Lemma \ref{lemma:Ih}), the domain perturbation estimates (Lemma \ref{lemma:geo-perturb} and \ref{lemma:geo-perturb-b}), and the boundedness of $\Phi^m$ (cf. \eqref{eq:Phi-approx3}), we have
	From the domain perturbation estimates (Lemma \ref{lemma:geo-perturb} and \ref{lemma:geo-perturb-b}), we have
	\begin{align}
		|d_u^{m,1}(\phi_h)|
		&\leq
		\bigg|
		\int_{\Ohsm} \D_\Ohsm (1 - I_h)(u^m\circ\Phi^m) \cdot \D_\Ohsm \phi_h
		\bigg|
		\notag\\
		&\quad
		+
		\bigg|
		\int_{\Ohsm} \D_\Ohsm (u^m\circ\Phi^m) \cdot \D_\Ohsm \phi_h
		-
		\int_{\Om} \D_\Om u^m \cdot \D_\Om \phi_h^\ell
		\bigg|
		\notag\\
		&\leq
		\big(C_{\mum}
		(1+\musm)h^k + C_{\mum, \num}
		(1+\nusm) h^{k+1/2} \big) \| \phi_h \|_{H^1(\Ohsm)}
		\notag ,
	\end{align}
	where the first term on the right-hand side comes from the following bound using the norm equivalence, interpolation error estimate (Lemma \ref{lemma:Ih}), and the boundedness of $\Phi^m$ (cf. \eqref{eq:Phi-approx4}--\eqref{eq:Phi-approx5}):
	\begin{align}
		&\|\mathbb{D}_{\hat{\Omega}_{h,\ast}^m} (1 - I_h) (u^m \circ \Phi^m)\|_{L^2(\hat{\Omega}_{h,\ast}^m)} 
		\notag\\
		&\leq
		C_{\mu_m}
		\| \nabla_\Ohso (1 - I_h) (u^m \circ \Phi^m \circ \hat X_{h,*}^m)\|_{L^2(\hat{\Omega}_{h,\ast}^0)} 
		\notag\\
		&\leq C_{\mu_m} h^k \| u^m \circ \Phi^m \circ \hat X_{h,*}^m \|_{H^{k+1}_h(\hat{\Omega}_{h,\ast}^0)}
		\notag\\
		&\leq C_{\mu_m} h^k 
		\Big(\| u^m \|_{H^{k+1}(\Omega_\pm)}
		\| \Phi^m\circ\hat X_{h,*}^m \|_{W^{1,\infty}(\hat{\Omega}_{h,\ast}^0)}^{k+1}
		+
		\cdots
		\notag\\
		&\quad
		+
		\| u^m  \|_{W^{2,\infty}(\Omega_\pm)}
		\| \Phi^m\circ\hat X_{h,*}^m \|_{W^{1,\infty}(\hat{\Omega}_{h,\ast}^0)}
		\| \Phi^m\circ\hat X_{h,*}^m \|_{H^{k}_h(\hat{\Omega}_{h,\ast}^0)}
		\notag\\
		&\quad
		+
		\| u^m  \|_{W^{1,\infty}(\Omega_\pm)}
		\| \Phi^m\circ\hat X_{h,*}^m \|_{H^{k+1}_h(\hat{\Omega}_{h,\ast}^0)}
		\Big)
		\notag\\
		&\leq C_{\mu_m}h^k 
		\Big(1
		+
		\| (\Phi^m - {\rm id}_{\Ohsm})\circ\hat X_{h,*}^m \|_{H^{k+1}_h(\hat{\Omega}_{h,\ast}^0)}
		+
		\| \hat X_{h,*}^m \|_{H^{k+1}_h(\hat{\Omega}_{h,\ast}^0)}
		\Big)
		\notag\\
		&\leq C_{\mu_m}(1+\mu_{*,m})h^k , \notag
	\end{align}
	for sufficiently small $h \leq h_{\mu_{*,m}, \nu_{*,m}, \mu_m, \nu_m}$.
	Similarly,
	\begin{align}
		|d_u^{m,2}(\phi_h)|
		&\leq
		\big(C_{\mum}
		(1+\musm)h^{k} + C_{\mum, \num}
		(1+\nusm) h^{k+1/2} \big) \| \phi_h \|_{H^1(\Ohsm)} ,
		\notag\\
		|d_u^{m,3}(\phi_h)|
		&\leq
		C_\num
		(1+\nusm) h^{k+1} \| \phi_h \|_{L^2(\Ghsm)}
		. \notag
	\end{align}
	Analogous to $d_u^{m,2}$,
	\begin{align}
		|d_p^{m}(\psi_h)|
		&\leq
		\big(C_{\mum}
		(1+\musm)h^{k} + C_{\mum, \num}
		(1+\nusm) h^{k+1/2} \big) \| \psi_h \|_{L^2(\Ohsm)}
		. \notag
	\end{align}
	The estimate for $d_x^m$ follows from the geometric relation \eqref{eq:X-id1}--\eqref{W1infty-g}:
	\begin{align}
		|d_x^m(\chi_h)|
		\leq
		\| I_h(g^{-\ell}) \|_{H^1(\Ghsm)}
		\| \chi_h \|_{H^{-1}(\Ghsm)}
		\leq
		C_\num \tau
		\| \chi_h \|_{H^{-1}(\Ghsm)} . \notag
	\end{align}
	For $d_\kappa^{m,1}$,
	\begin{align}
		| d_\kappa^{m,1}(\eta_h) |
		=
		| d_u^{m,3}(\eta_h) |
		\leq
		C_\num
		(1+\nusm) h^{k+1} \| \eta_h \|_{L^2(\Ghsm)} . \notag
	\end{align} 
	%	Finally, the perturbation estimate (Lemma \ref{lemma:geo-perturb}) and \eqref{eq:X-id1}--\eqref{eq:X-id2} lead to
	%	\begin{align}
		%		| d_\kappa^{m,2}(\eta_h) |
		%		\leq
		%		C_\num
		%		(1+\nusm) h^{k} \| \eta_h \|_{H^1(\Ghsm)}
		%		+
		%		C_\num
		%		\tau \| \eta_h \|_{L^2(\Ghsm)}
		%		 . \notag
		%	\end{align}
	The term $d_\kappa^{m,2}$ is standard and is estimated as in \cite[Eq. (4.3)]{BL24FOCM}, where the dominant contributions correspond to $d_{21}^m$ and $d_{23}^m$ in that reference. Collecting these estimates, we get:
	\begin{align}\label{eq:dkm2}
		| d_\kappa^{m,2}(\eta_h) |
		\leq
		C_\num
		(1+\nusm) h^{k} \| \eta_h \|_{H^1(\Ghsm)} 
		+
		C_\num
		\tau \| \eta_h \|_{L^2(\Ghsm)}
		.
	\end{align}
	If $d=2$, using the super-approximation result (Lemma \ref{Lemma-GLW}), the factor $h^k$ on the right-hand side of \eqref{eq:dkm2} can be improved to $h^{k+1}$.
	
	The proof is complete.
\end{proof}
For any linear functional $F(\cdot)$ defined on a finite element space $S_h(D)$, where $D$ is a bounded domain equipped with some triangulation $\mathcal T(D)$, we introduce the following associated norms:
\begin{align}
	\| F(\cdot) \|_{W^{s,q}(D)} 
	&=
	\sup_{0\neq\phi_h\in S_h(D)} \frac{| F(\phi_h) |}{\| \phi_h \|_{(W^{s,q}(D))^*}} ,
	\notag\\
	\| F(\cdot) \|_{W_0^{s,q}(D)} 
	&=
	\sup_{0\neq\phi_h\in S_h(D)} \frac{| F(\phi_h) |}{\| \phi_h \|_{(W_0^{s,q}(D))^*}} , \notag
\end{align}
for any $s\in\R$ and $q\geq 1$.
By employing these definitions, one can readily obtain norm estimates for the linear functionals associated with consistency errors. For instance, Lemma \ref{lemma:dm} directly implies
\begin{align}\label{eq:dpm-H1/2}
	\| d_p^m(\cdot) \|_{H^{1/2}(\Ohsm)}
	&\leq
	\big(C_{\mum}
	(1+\musm)h^{k} + C_{\mum, \num}
	(1+\nusm) h^{k+1/2} \big)
	\sup_{\psi_h\neq0} \frac{\| \psi_h \|_{L^2(\Ohsm)}}{\| \psi_h \|_{H_0^{-1/2}(\Ohsm)}}
	\notag\\
	&\leq
	C_{\mum}
	(1+\musm)h^{k-1/2} + C_{\mum, \num}
	(1+\nusm) h^{k} ,
\end{align}
where we have used the following inverse inequality for negative norms: From the stability of the $L^2$ projection operator $P_h:L^2(\Ohsm)\rightarrow S_h(\Ohsm)$,
\begin{align}\label{eq:inv-H-1/2}
	&\| \psi_h \|_{L^2(\Ohsm)}
	=
	\sup_{0\neq\phi \in L^2}
	\frac{\big| \int_\Ohsm \psi_h \phi \big|}{\| \phi \|_{L^2(\Ohsm)}}
	\leq
	C_{\mu_m}
	\sup_{0\neq\phi \in L^2}
	\frac{\big| \int_\Ohsm \psi_h P_h\phi \big|}{\| P_h \phi \|_{L^2(\Ohsm)}}
	\notag\\
	&
	\leq
	C_{\mu_m}
	\sup_{0\neq\phi_h\in S_h}
	\frac{\big| \int_\Ohsm \psi_h \phi_h \big|}{\| \phi_h \|_{L^2(\Ohsm)}}
	\leq
	C_{\mu_m}
	\sup_{0\neq\phi_h\in S_h}
	\frac{\| \psi_h \|_{H_0^{-1/2}(\Ohsm)} \| \phi_h \|_{L^{2}(\Ohsm)}^{1/2} \| \phi_h \|_{H^{1}(\Ohsm)}^{1/2}}{\| \phi_h \|_{L^2(\Ohsm)}}
	\notag\\
	&
	\leq
	C_{\mu_m} h^{-1/2} \| \psi_h \|_{H_0^{-1/2}(\Ohsm)} .
\end{align}

\section{Stability analysis for numerical errors}
\label{sec:stab-anal}

\subsection{Errors and induction hypothesis}\label{sec:err-ind-hypo}

%\subsection{Definition of errors}

The relevant errors are defined as follows:
%\begin{equation}\label{eq:err-def}
%	\begin{split}
%		e_u^m &= \uhm - {I_h}(u^m\circ \Phi^m),
%		%	\notag
%		\\
%		e_p^m &= \phm - \bar I_h(p^m\circ \Phi^m),
%		%	\notag
%		\\
%		\hat e_x^m &= X_h^m - \hat X_{h,*}^m,
%		%	\notag
%		\\
%		e_x^m &= X_h^m - X_{h,*}^m,
%		%	\notag
%		\\
%		e_{x,\#}^m &= X_h^m - I_h(X^m\circ \Phi^0),
%		%	\notag
%		\\
%		e_\kappa^m &= \khm - I_h(H^mn^m)^{-\ell},
%		%	\notag
%	\end{split}
%\end{equation}
\begin{equation}\label{eq:err-def}
	\begin{aligned}
		e_u^m
		&= \uhm - I_h(u^m\circ \Phi^m),
		&
		e_p^m
		&= \phm - \bar I_h(p^m\circ \Phi^m),
		\\
		\hat e_x^m
		&= X_h^m - \hat X_{h,*}^m,
		&
		e_x^m
		&= X_h^m - X_{h,*}^m,
		\\
		e_{x,\#}^m
		&= X_h^m - I_h(X^m\circ \Phi^0),
		&
		e_\kappa^m
		&= \khm - I_h(H^mn^m)^{-\ell},
	\end{aligned}
\end{equation}
whose quantitative behavior will be the main focus of this section.

%{\r norm equiv.}

At time $t_m$, we impose the induction hypothesis
\begin{align}\label{eq:ind-hypo}
	&
	h^{-1} \| E_h \ehxm \|_{W^{1,\infty}(\Ohsf)}
	+
	h^{-3/2}\| E_h \ehxm \|_{H^{1}(\Ohsf)}
	+
	h^{-1} \| \ehxm \|_{W^{1,\infty}(\Ghsf)}
	\notag\\
	&\quad
	+
	h^{-3/2}\| \ehxm \|_{H^{1}(\Ghsf)}
	+
	h^{-3/2}\| e_{x,\#}^m \|_{H^{1}(\Ohsf)}
	\leq 1 .
\end{align}
Note that, since $\uhm$ and $\phm$ are not yet known at time $t_m$, we cannot perform induction on them, nor on $\eum$ and $\epm$.
According to \cite[Lemma 4.3]{KLL17} and \cite[Lemma 7.2]{KLL19}, Eq. \eqref{eq:ind-hypo} implies the equivalence of $L^q$ and $W^{1,q}$ norms (up to a constant $C$ for sufficiently small $h\leq h_{\mum,\num}$), for all $1\le q\le \infty$, of finite element functions with a common nodal vector on the linearly interpolated families
\begin{align}
	\hat\Omega_{h,\theta}^m&=(1 - \theta)\Ohsm + \theta\Ohm ,\quad  \theta\in [0, 1] ,
	\notag\\
	\hat\Gamma_{h,\theta}^m&=(1 - \theta)\Ghsm + \theta\Ghm ,\quad  \theta\in [0, 1] . \notag
\end{align}
From the real interpolation theory (cf. \cite[Proposition 2.4.3]{SS10book}, \cite[Theorem 1.6]{Lunardi2018book} and \cite[Chapter 1]{LM2012book}), such norm equivalence holds for the $H^{1/2}$ norm. Note that the space $H_0^{1/2}$ is equipped with the same norm. Moreover, by duality, the $H^{-1}$, $H_0^{-1}$, $H^{-1/2}$ and $H_0^{-1/2}$ norms are also equivalent on the linearly interpolated families.
We aim to prove the error estimates in Theorem \ref{thm:main} at the next time level $t_{m+1}$ with the aid of \eqref{eq:ind-hypo}.
To close the proof, we shall recover the induction hypothesis \eqref{eq:ind-hypo} at $t_{m+1}$.
%\begin{remark}\upshape
%	In \eqref{eq:ind-hypo}, we assume the $W_h^{2,\infty}(\Ghsm)$ boundedness for $d=2$ since we need to use super-approximation estimate in Lemma \ref{lemma:super_conv2}.
%	
%	For $d=3$, we can't assume the boundedness of $W^{1,\infty}(\Ohsm)$ norm due to the lack of convergence in $k=2$. It does not really matter since we do not need the norm equivalence of the bulk domain $\Ohsm$ in this section. In Section \ref{sec:shape-reg}, we have redundant factor of $h$ to control $\mum$.
%\end{remark}

In the rest of this paper, we use $C_{\mum,\num}$ as a generic positive constant which may be different at different occurrences, possibly dependent on $T$, $\mu_m$ and $\nu_m$ ($m$ is the current time level and can be easily read off from the context), but is independent of $\tau$, $h$, $\musm$, and $\nusm$. 
We use the notation $A \lesssim B$ to denote the relation ``$A\le C_{\mum,\num} B$ for some constant $C_{\mum,\num}$''. If $A\lesssim B$ and $B\lesssim A$ at the same time, then we use the notation $A\sim B$.
Besides, we denote by $C$ or $C_0$ another generic positive constant which is independent of $\tau$, $h$, $\mu_m$, $\musm$, $\nu_m$ and $\nusm$.

\subsection{Error equations}\label{sec:err-eq}
To derive the error equation, we subtract the consistency equations \eqref{subeq:2phase-hs} from the numerical scheme \eqref{subeq:2phase-h}. 
For the variable $u$, the corresponding error equation is
\begin{align}\label{eq:eu}
	-d_u^m(\phi_h)
	&=
	\int_{\Ohm} 2\nu \D_\Ohm \uhm \cdot \D_\Ohm \phi_h
	-
	\int_{\Ohsm} 2\nu \D_\Ohsm {I_h}(u^m\circ\Phi^m) \cdot \D_\Ohsm \phi_h
	\notag\\
	&\quad
	- 
	\int_\Ohm \phm \nabla_\Ohm \cdot \phi_h
	+
	\int_\Ohsm \bar I_h (p^m\circ\Phi^m) \nabla_\Ohsm \cdot \phi_h
	\notag\\
	&\quad
	+
	\int_\Ghm \khm \cdot \phi_h
	-
	\int_\Ghsm I_h (H^m n^m)^{-\ell} \cdot \phi_h
	\notag\\
	%	&\quad
	%	-
	%	\int_{\Ohsm} (R_h - I_h)(p^m\circ\Phi^m) \nabla_\Ohsm \cdot \phi_h
	%	\notag\\
	&=
	\int_{\Ohsm} 2\nu \D_\Ohsm \eum \cdot \D_\Ohsm \phi_h
	%	\notag\\
	%	&\quad
	- 
	\int_\Ohsm \epm \nabla_\Ohsm \cdot \phi_h
	%	\notag\\
	%	&\quad
	+
	\int_\Ghsm \ekm \cdot \phi_h
	%	-
	%	d_u^{m,1}(\phi_h)
	%	-
	%	d_u^{m,2}(\phi_h)
	%	-
	%	\int_{\Ohsm} (R_h - I_h)(p^m\circ\Phi^m) \nabla_\Ohsm \cdot \phi_h
	+
	J_u^{m}(\phi_h),
	%	\qquad\mbox{(\eqref{eq:Rhu} is used)}
\end{align}
with
\begin{align}
	J_u^{m}(\phi_h)
	&=
	\int_{\Ohm} 2\nu \D_\Ohm \uhm \cdot \D_\Ohm \phi_h
	-
	\int_{\Ohsm} 2\nu \D_\Ohsm \uhm \cdot \D_\Ohsm \phi_h
	\notag\\
	&\quad
	- 
	\int_\Ohm \phm \nabla_\Ohm \cdot \phi_h
	+
	\int_\Ohsm \phm \nabla_\Ohsm \cdot \phi_h
	\notag\\
	&\quad
	+
	\int_\Ghm \khm \cdot \phi_h
	-
	\int_\Ghsm \khm \cdot \phi_h
	\notag\\
	&=:
	J_u^{m,1}(\phi_h)
	+
	J_u^{m,2}(\phi_h)
	+
	J_u^{m,3}(\phi_h) .
	\notag
\end{align}
%$\eum$ is designed to satisfy the discrete divergence-free condition \eqref{eq:Rhp-div}:
%\begin{align}\label{eq:ep}
%	\int_\Ohsm \nabla_\Ohsm \cdot \eum \psi_h = 0 .
%\end{align}
Similarly, we have
\begin{align}\label{eq:ex}
	-d_x^m(\chi_h)
	&=
	\int_\Ghm \frac{\XhM - \Xhm}{\tau} \cdot \chi_h
	-
	\int_\Ghsm \frac{X_{h,*}^{m+1} - \hat X_{h,*}^m}{\tau} \cdot \chi_h
	\notag\\
	&\quad
	-
	\int_\Ghm \uhm \cdot \chi_h 
	+
	\int_\Ghsm I_h (u^m|_\Gm)^{-\ell} \cdot \chi_h 
	\notag\\
	&=
	\int_\Ghsm \frac{\exM -\ehxm}{\tau} \cdot \chi_h
	-
	\int_\Ghsm \eum \cdot \chi_h 
	+
	J_x^m(\chi_h) ,
\end{align}
%where we have used the boundary condition of $R_h$, i.e., $I_h (u^m|_\Gm)^{-\ell} = R_h(u^m\circ\Phi^m)|_\Ghsm$ (cf. \eqref{eq:Rh-bc}), in the first identity, 
with
\begin{align}
	J_x^m(\chi_h)
	&=
	\int_\Ghm \frac{\XhM - \Xhm}{\tau} \cdot \chi_h
	-
	\int_\Ghsm \frac{\XhM - \Xhm}{\tau} \cdot \chi_h
	\notag\\
	&\quad
	-
	\int_\Ghm \uhm \cdot \chi_h 
	+
	\int_\Ghsm \uhm \cdot \chi_h 
	\notag\\
	&=:
	J_x^{m,1}(\chi_h)
	+
	J_x^{m,2}(\chi_h)
	\notag .
\end{align}
Finally, the error equation for $\kappa$ reads
\begin{align}\label{eq:ek}
	-d_\kappa^m(\eta_h)
	&=
	-\int_\Ghm \khm \cdot \eta_h
	+
	\int_\Ghsm I_h (H^m n^m)^{-\ell} \cdot \eta_h
	\notag\\
	&\quad
	+
	\int_\Ghm \nabla_\Ghm \XhM \cdot \nabla_\Ghm \eta_h
	-
	\int_\Ghsm \nabla_\Ghsm X_{h,*}^{m+1} \cdot \nabla_\Ghsm \eta_h 
	\notag\\
	&=:
	-
	\int_\Ghsm \ekm \cdot \eta_h
	+
	\int_\Ghsm \nabla_\Ghsm \exM \cdot \nabla_\Ghsm \eta_h 
	+
	J_\kappa^{m}(\eta_h) ,
\end{align}
with
\begin{align}
	J_\kappa^{m}(\eta_h)
	&=
	-\int_\Ghm \khm \cdot \eta_h
	+
	\int_\Ghsm \khm \cdot \eta_h
	\notag\\
	&\quad
	+
	\int_\Ghm \nabla_\Ghm \XhM \cdot \nabla_\Ghm \eta_h
	-
	\int_\Ghsm \nabla_\Ghsm X_{h}^{m+1} \cdot \nabla_\Ghsm \eta_h 
	\notag\\
	&=:
	J_\kappa^{m,1}(\eta_h)
	+
	J_\kappa^{m,2}(\eta_h) .
	\notag
\end{align}
%\begin{remark}\upshape
%	Here we do not really need to derive error equation of the divergence-free equation \eqref{eq:2phase-h2} -- the information of this part is encoded in $K_u^m$ in \eqref{eq:eu} (cf. Lemma \ref{lemma:Rh-stab}).
%\end{remark}

\subsection{Bilinear error estimates}\label{sec:bilinear-err-est}

%boundedness...
%
%The following error estimates for bilinear forms defined on the interfaces are standard and well-known. 
%
%From \cite{BGV26},
We first record a new bilinear error estimate for the mass bilinear form whose upper bound does not involve any gradients due to the intrinsic orthogonality of the projection error $\ehxm$. This result significantly improves the known standard results (cf. \cite{Kov18,KLL19}).
The absence of gradient in the upper bound is of vital importance to conclude the stability of numerical errors (cf. \eqref{eq:Jxm}).
\begin{lemma}\label{lemma:e-blinear}
	Assuming the induction hypothesis \eqref{eq:ind-hypo}, for all finite element functions $f_h,g_h\in S_h(\Ghsm)$, it holds that
	\begin{align*}
		\Big|\int_{\Ghm} f_h g_h - \int_{\Ghsm} f_h g_h \Big| 
		&\lesssim
		%		\min\{\|  \ehxm \|_{L^2(\Ghsm)} \|f_h\|_{ L^\infty(\Ghsm)}, 
		%		\|  \ehxm \|_{L^\infty(\Ghsm)} \|f_h\|_{ L^2(\Ghsm)}
		%		\}
		\|  \ehxm \|_{L^p(\Ghsm)} \|f_h\|_{ L^q(\Ghsm)}
		\|g_h\|_{L^r(\Ghsm)}
		,
	\end{align*}
	for all $1/p+1/q+1/r=1$.
\end{lemma}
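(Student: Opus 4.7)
The plan is to exploit the nodal identification of Section~\ref{sec:FEM}: we view both $\Ghm$ and $\Ghsm$ as parametrized over the flat mesh $\Ghsf$ via $X_h^m$ and $\hat X_{h,*}^m$ respectively, so that $f_h$ and $g_h$, identified by their common nodal vector, correspond to the same functions on $\Ghsf$. This rewrites
\begin{equation*}
\int_{\Ghm} f_h g_h - \int_{\Ghsm} f_h g_h
= \int_{\Ghsf} f_h g_h\,(J^m - J^m_*),
\end{equation*}
where $J^m$ and $J^m_*$ denote the surface Jacobians of $X_h^m$ and $\hat X_{h,*}^m$. Taylor expanding and using the induction hypothesis \eqref{eq:ind-hypo} to control the remainder (it gives $\|\nabla_\Ghsf \ehxm\|_{L^\infty}\lesssim h$), the problem reduces, after change of variable back to $\Ghsm$, to estimating the leading contribution
\begin{equation*}
\int_{\Ghsm} f_h g_h\,(\nabla_\Ghsm\cdot \ehxm)
\end{equation*}
plus a quadratic tail of the form $\int_{\Ghsm} f_h g_h\,O(|\nabla_\Ghsm \ehxm|^2)$. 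The quadratic tail is absorbed by pulling out $\|\nabla_\Ghsm\ehxm\|_{L^\infty}\lesssim h$ and applying H\"older's inequality.

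The heart of the argument, and the reason no gradient of $\ehxm$ appears in the final bound, is the \emph{pointwise} estimate $|\nabla_\Ghsm\cdot \ehxm|\lesssim |\ehxm|$. To establish it, I would use the nodal decomposition \eqref{eq:geo_rel_1}--\eqref{eq:geo_rel_2}, writing $\ehxm = I_h[\phi_h\nsm] + f_h^m$ with $\phi_h := I_h(\exm\cdot \nsm)$ and $|f_h^m|\lesssim |(I-\Nsm)\exm|^2$ at the nodes. A super-approximation argument (Appendix~\ref{sec:super}) then lets me replace $I_h[\phi_h \nsm]$ by $\phi_h\nsm$ modulo a higher-order correction, and the product rule gives
\begin{equation*}
\nabla_\Ghsm\cdot(\phi_h \nsm) = \nsm\cdot\nabla_\Ghsm \phi_h + \phi_h\,(\nabla_\Ghsm\cdot \nsm).
\end{equation*}
Because $\nabla_\Ghsm \phi_h$ is tangential to $\Ghsm$ while $\nsm$ is (approximately) normal, the first summand is geometrically small, and the second summand is bounded pointwise by $|\phi_h|$ times the smooth curvature of $\Gm$. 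The nodal remainder $f_h^m$ is quadratic in $\ehxm$, hence absorbed using $\|\ehxm\|_{L^\infty}\lesssim h$ from \eqref{eq:ind-hypo}.

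The main obstacle will be making the geometric cancellation $\nsm\cdot\nabla_\Ghsm \phi_h \approx 0$ rigorous: $\nsm$ is the extension of the smooth normal of $\Gm$, not the true normal of $\Ghsm$, so the dot product is not identically zero but only small. The resolution is to substitute $\nsm = n_h^m + (\nsm - n_h^m)$, where $n_h^m$ is the geometric normal to $\Ghsm$; the first piece yields exact orthogonality with $\nabla_\Ghsm\phi_h$, while the second piece is of order $h^k$ by the super-approximation of normals and, combined with an inverse inequality, converts the spurious factor $\nabla_\Ghsm \phi_h$ into a bounded multiple of $|\phi_h|$, hence of $|\ehxm|$. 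Once $|\nabla_\Ghsm\cdot\ehxm|\lesssim |\ehxm|$ is in hand pointwise with an $L^\infty$ coefficient depending on $\mu_m,\nu_m$ and the smooth geometry, the generalized H\"older inequality with $1/p+1/q+1/r=1$ delivers the claim.
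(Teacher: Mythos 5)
Your high-level strategy matches the paper's: reduce the domain discrepancy to an integral of $f_h g_h$ against (essentially) $\nabla_{\Ghsm}\cdot\ehxm$ (the paper does this via the fundamental theorem of calculus along the interpolated family $\theta\Ghm+(1-\theta)\Ghsm$, yielding their $Q_1,Q_2,Q_3$; your Jacobian Taylor expansion on $\Ghsf$ is an equivalent formulation), then exploit the near-normality of $\ehxm$ and an orthogonality of the form $\text{normal}\cdot\nabla=0$ to remove the gradient. Your choice to remain on $\Ghsm$ and use $n_h^m\cdot\nabla_{\Ghsm}=0$ exactly, then treat $\nsm-n_h^m=O(h^k)$, is a legitimate alternative to the paper's route of lifting $Q_1^m$ to $\Gm$ and using $\Nsm\cdot\nabla_\Gm=0$ there; both close with an inverse inequality.

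However, there is a genuine gap in how you handle the nodal remainder. You decompose $\ehxm = I_h[\phi_h\nsm] + f_h^m$ with $\phi_h = I_h(\exm\cdot\nsm)$ and then assert that ``$f_h^m$ is quadratic in $\ehxm$, hence absorbed using $\|\ehxm\|_{L^\infty}\lesssim h$.'' But \eqref{eq:geo_rel_2} gives $|f_h^m|\lesssim |\Tsm\exm|^2$ at nodes, i.e.\ $f_h^m$ is quadratic in the tangential part of the \emph{trajectory} error $\exm$, not in the \emph{projection} error $\ehxm$. These are different quantities: $\exm - \ehxm = \hat X_{h,*}^m - X_{h,*}^m$ is a tangential drift on $\Gm$, and in fact $\Tsm\ehxm$ vanishes at nodes so $\Tsm\exm(p) = \Tsm(\hat X_{h,*}^m - X_{h,*}^m)(p)$ is precisely this drift. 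The induction hypothesis \eqref{eq:ind-hypo} controls only $\ehxm$ (and $E_h\ehxm$), not $\exm$, so your argument does not close: you would need $\|\Tsm\exm\|_{L^{2p}}^2\lesssim\|\ehxm\|_{L^p}$, which is not available at this point of the analysis. Moreover, $\phi_h$ itself is defined through $\exm$, so the final bound would depend on $\|\exm\|$ rather than $\|\ehxm\|$ as claimed.

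The paper sidesteps this entirely by never invoking $\exm$, $\phi_h$ or $f_h^m$ in this lemma: it splits $\ehxm = \Nsm\ehxm + \Tsm\ehxm$ as a function (not at nodes), treats $\nabla_\Gm\cdot(\Nsm(\ehxm)^\ell) = (\nabla_\Gm\cdot\Nsm)\cdot(\ehxm)^\ell$ by exact orthogonality on $\Gm$, and controls $\Tsm\ehxm$ via Lemma~\ref{lemma:T<=N2} (which uses that $\Tsm\ehxm$ vanishes at nodes, so super-approximation gains a factor $h$ at no cost in terms of $\exm$). If you replace your $\phi_h := I_h(\exm\cdot\nsm)$ decomposition by the direct split $\ehxm = \Nsm\ehxm + \Tsm\ehxm$ and invoke Lemma~\ref{lemma:T<=N2}, your ``stay on $\Ghsm$'' variant of the argument goes through. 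A final minor point: the estimate $|\nabla_{\Ghsm}\cdot\ehxm|\lesssim|\ehxm|$ you aim for is not truly pointwise -- both the super-approximation step and the inverse inequality operate at the $L^p$ level -- though this does not affect the validity of the intended H\"older argument.
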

\begin{proof}
	Define the linearly parametrized intermediate surface $\hat \Gamma_{h,*}^{m,\theta}:= \theta \Ghm + (1-\theta) \Ghsm$. Then using Lemma \ref{lemma:ud}, we derive
	\begin{align}\label{eq:Q-decomp}
		&\int_{\Ghm} f_h g_h - \int_{\Ghsm} f_h g_h
		=
		\int_0^1\frac{\d}{\d\theta}\bigg(\int_{\Gamma_{h,*}^{m,\theta}} f_h g_h \bigg)\d\theta
		\notag\\
		&=
		\int_0^1 \int_{\Gamma_{h,*}^{m,\theta}} f_h g_h (\nabla_{\Gamma_{h,*}^{m,\theta}}\cdot\ehxm) \d\theta
		\notag\\
		&=
		\int_{\Gm} f_h^\ell g_h^\ell (\nabla_\Gm\cdot(\ehxm)^\ell)
		\notag\\
		&\quad
		+
		\int_{\Ghsm} f_h g_h (\nabla_\Ghsm\cdot\ehxm)
		-
		\int_{\Gm} f_h^\ell g_h^\ell (\nabla_\Gm\cdot(\ehxm)^\ell)
		\notag\\
		&\quad
		+
		\int_0^1 \bigg(\int_{\Gamma_{h,*}^{m,\theta}} f_h g_h (\nabla_{\Gamma_{h,*}^{m,\theta}}\cdot\ehxm)
		-
		\int_{\Ghsm} f_h g_h (\nabla_\Ghsm\cdot\ehxm)
		\bigg) \d\theta
		\notag\\
		&=:
		Q_1^m + Q_2^m + Q_3^m.
	\end{align}
	For $Q_1^m$,
	\begin{align}\label{eq:Q1}
		|Q_1^m|
		&=
		\bigg|\int_{\Gm} f_h^\ell g_h^\ell \nabla_\Gm\cdot(\Nsm(\ehxm)^\ell)
		+
		\int_{\Gm} f_h^\ell g_h^\ell \nabla_\Gm\cdot(\Tsm(\ehxm)^\ell)
		\bigg|
		\notag\\
		&=
		\bigg|
		\int_{\Gm} f_h^\ell g_h^\ell (\nabla_\Gm\cdot\Nsm)\cdot(\ehxm)^\ell
		+
		\int_{\Gm} f_h^\ell g_h^\ell \nabla_\Gm\cdot(\Tsm(\ehxm)^\ell)
		\bigg|
		\notag\\
		&\lesssim
		%		\min\{\|  \ehxm \|_{L^2(\Ghsm)} \|f_h\|_{ L^\infty(\Ghsm)}, 
		%		\|  \ehxm \|_{L^\infty(\Ghsm)} \|f_h\|_{ L^2(\Ghsm)}
		%		\}\|g_h\|_{L^2(\Ghsm)} 
		(\|  \ehxm \|_{L^p(\Ghsm)} + h \|  \nabla_\Ghsm \ehxm \|_{L^p(\Ghsm)}) \|f_h\|_{ L^q(\Ghsm)}
		\|g_h\|_{L^r(\Ghsm)}
		,
	\end{align}
	where we have used the orthogonality $\Nsm \cdot \nabla_\Gm = 0$ and the super-approximation (cf. Lemma \ref{lemma:T<=N2}).
	
	Note that, for any given integrability triplet $(p,q,r)$ satisfying $1/p + 1/q + 1/r = 1$, there exists another triplet $(\tilde p,\tilde q,\tilde r)$ satisfying $\tilde p \geq p, \tilde q \geq q, \tilde r \geq r$, and $1/ \tilde p + 1/ \tilde q + 1/ \tilde r = 1/2$. 
	Applying the consistency error estimate (Lemma \ref{lemma:geo-perturb}) and using the inverse inequality to convert $(\tilde p,\tilde q,\tilde r)$ to $(p,q,r)$, we get
	\begin{align}\label{eq:Q2}
		|Q_2^m|
		&\lesssim
		\nusm h^{k}
		\| \nabla_\Ghsm \ehxm \|_{L^{\tilde p}(\Ghsm)} \|f_h\|_{ L^{\tilde q}(\Ghsm)}
		\|g_h\|_{L^{\tilde r}(\Ghsm)}
		\notag\\
		&\lesssim
		\nusm h^{k+1/2-d/2}
		\| \nabla_\Ghsm \ehxm \|_{L^p(\Ghsm)} \|f_h\|_{ L^q(\Ghsm)}
		\|g_h\|_{L^r(\Ghsm)}
		.
	\end{align}
	Using the fundamental theorem of calculus again (cf. \eqref{eq:Q-decomp}),
	\begin{align}\label{eq:Q3}
		|Q_3^m|
		&\lesssim
		\| \nabla_\Ghsm \ehxm \|_{L^\infty(\Ghsm)}
		\| \nabla_\Ghsm \ehxm \|_{L^p(\Ghsm)} \|f_h\|_{ L^q(\Ghsm)}
		\|g_h\|_{L^r(\Ghsm)} 
		.
	\end{align}
	The proof is complete by applying the induction hypothesis \eqref{eq:ind-hypo} and the inverse inequality to \eqref{eq:Q1}--\eqref{eq:Q3} for sufficiently small $h\leq h_\nusm$.
\end{proof}
Following a similar proof, we can show
\begin{align}\label{lemma:e-blinear-coro}
	\Big|\int_{\Ghm} f_h - \int_{\Ghsm} f_h \Big| 
	&\lesssim
	\|  \ehxm \|_{H^1(\Ghsm)} \|f_h\|_{ H^{-1}(\Ghsm)}
	\quad\forall f_h\in S_h(\Ghsm)
	.
\end{align}

Thanks to the construction of $\Ohsm$ in Section \ref{sec:bulk-mesh} (see \eqref{eq:Ohsm-def}), the errors of bilinear forms defined in the bulk domains are also stable.
According to the pointwise identity in Lemma \ref{lemma:udb}, we have the following lemma.
\begin{lemma}\label{lemma:e-blinear-b}
	Assuming the induction hypothesis \eqref{eq:ind-hypo},
	for all finite element functions $f_h,g_h\in S_h(\Ohsm)$ and $1/p+1/q+1/r=1$, it holds that
	\begin{align*}
		\Big|\int_{\Ohm} f_h g_h - \int_{\Ohsm} f_h g_h \Big| 
		&\lesssim
		\| \nabla_\Ohsm E_h \ehxm \|_{L^p(\Ohsm)} \|f_h\|_{ L^q(\Ohsm)}
		\|g_h\|_{L^r(\Ohsm)} ,
		\\
		\Big|\int_{\Ohm} \nabla_\Ohm f_h g_h - \int_{\Ohsm} \nabla_\Ohsm f_h g_h \Big| 
		&\lesssim
		\| \nabla_\Ohsm E_h \ehxm \|_{L^p(\Ohsm)} \| \nabla_\Ohsm f_h\|_{ L^q(\Ohsm)}
		\|g_h\|_{L^r(\Ohsm)}
		,
	\end{align*}
%	\begin{align*}
%		\Big|\int_{\Ohm} \nabla_\Ohm f_h g_h - \int_{\Ohsm} \nabla_\Ohsm f_h g_h \Big| 
%		&\lesssim
%		\| \nabla_\Ohsm E_h \ehxm \|_{L^p(\Ohsm)} \| \nabla_\Ohsm f_h\|_{ L^q(\Ohsm)}
%		\|g_h\|_{L^r(\Ohsm)}
%		,
%	\end{align*}
	and
	\begin{align*}
		&\Big|\int_{\Ohm}\D_{\Ohm} f_h\cdot\D_{\Ohm} g_h - \int_{\Ohsm}\D_{\Ohsm} f_h \cdot \D_{\Ohsm} g_h \Big| 
		\notag\\
		&
		\qquad
		\lesssim
		\| \nabla_\Ohsm E_h \ehxm \|_{L^p(\Ohsm)} \| \nabla_\Ohsm f_h\|_{ L^q(\Ohsm)}
		\| \nabla_\Ohsm g_h\|_{L^r(\Ohsm)}
		.
	\end{align*}
\end{lemma}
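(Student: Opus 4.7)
The plan is to mimic the strategy used in the proof of Lemma \ref{lemma:e-blinear}, replacing the interface-transport identity by its bulk analogue (Lemma \ref{lemma:udb}). The key observation is that, by the construction \eqref{eq:Ohsm-def}, the numerical mesh $\Ohm$ is obtained from $\Ohsm$ by the bulk displacement $E_h\ehxm$, so the appropriate interpolating family of bulk domains is
\begin{align*}
\hat\Omega_{h,\theta}^m := (1-\theta)\Ohsm + \theta\Ohm,\qquad \theta\in[0,1],
\end{align*}
whose velocity (as a moving domain in the $\theta$-variable) is precisely $E_h\ehxm$. The induction hypothesis \eqref{eq:ind-hypo} guarantees that the $\theta$-family is uniformly shape-regular, so that $L^p$ and $W^{1,p}$ norms of finite element functions with a fixed nodal vector are equivalent on $\hat\Omega_{h,\theta}^m$ for all $\theta\in[0,1]$, with constants depending only on $C_{\mum,\num}$.

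The first step is to write, by the fundamental theorem of calculus and Lemma \ref{lemma:udb},
\begin{align*}
\int_\Ohm f_h g_h - \int_\Ohsm f_h g_h
= \int_0^1\!\!\int_{\hat\Omega_{h,\theta}^m} f_h g_h\,\bigl(\nabla_{\hat\Omega_{h,\theta}^m}\cdot E_h\ehxm\bigr)\,\d\theta,
\end{align*}
after which Hölder's inequality with exponents $(p,q,r)$ and the norm equivalence on the interpolating family yield the first inequality directly. Unlike in the interface case there is no tangential/normal splitting to exploit: the full divergence of $E_h\ehxm$ appears, which is why the upper bound carries $\nabla_\Ohsm E_h\ehxm$ rather than $E_h\ehxm$ itself. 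For the second and third inequalities, the same calculus-in-$\theta$ strategy is used, but now one must also differentiate the gradient operators $\nabla_{\hat\Omega_{h,\theta}^m} f_h$ and the Cauchy--Green--type factor appearing in $\D_{\hat\Omega_{h,\theta}^m}f_h\cdot\D_{\hat\Omega_{h,\theta}^m}g_h$ with respect to $\theta$. The chain rule produces extra contributions of the schematic form $\nabla E_h\ehxm\,\cdot\,\nabla f_h\cdot g_h$ (and symmetrically in $g_h$), all of which are absorbed into the stated right-hand side.

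The main technical obstacle I expect is the bookkeeping for the gradient and symmetric gradient cases: the $\theta$-derivative of $\nabla_{\hat\Omega_{h,\theta}^m}$ produces terms involving $(\nabla E_h\ehxm)^\top$ contracted against the gradients of $f_h$ and $g_h$, and for the $\D$ case one must additionally differentiate the symmetrization. These terms need to be checked to come out in the claimed Hölder form, and the resulting $L^p$ norms on $\hat\Omega_{h,\theta}^m$ must be transferred back to $\Ohsm$ using the norm equivalence. Once this is done, integration in $\theta$ and the application of Hölder's inequality complete the proof. No super-approximation or orthogonality trick is needed here, in contrast to Lemma \ref{lemma:e-blinear}: the bulk estimates are honest first-order perturbations, and it is precisely the gradient factor $\nabla_\Ohsm E_h\ehxm$ on the right-hand side that reflects this.
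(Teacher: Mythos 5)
Your plan is correct and matches the paper's intended approach: the paper omits a detailed proof, stating only that the lemma follows from the pointwise identities of Lemma \ref{lemma:udb}, which is exactly the transport/commutator mechanism you invoke on the interpolating family $\hat\Omega_{h,\theta}^m$ with $\theta$-velocity $E_h\ehxm$ (with $\md f_h=\md g_h=0$ by nodal identification, so only the $\nabla\cdot E_h\ehxm$ term and, for the gradient cases, the $-\nabla(E_h\ehxm)_j\nabla_j f_h$ commutator term survive). Your remark that the bulk bound necessarily keeps the gradient factor $\nabla_\Ohsm E_h\ehxm$, because there is no normal/tangential orthogonality analogous to the $Q_1^m$ treatment in Lemma \ref{lemma:e-blinear}, is also exactly right.
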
 

\subsection{Estimates for $J^m$s}\label{sec:Jm}

This subsection is devoted to estimating the $J^m$-terms using the bilinear error estimates developed in Section \ref{sec:bilinear-err-est}.
% Lemma \ref{lemma:e-blinear} and \ref{lemma:e-blinear-b}.
\begin{lemma}\label{lemma:Jm}
	Applying the above two lemmas to
	the $J^m$s defined in \eqref{eq:eu}--\eqref{eq:ek}, we immediately get
	\begin{align}
		| J_u^m (\phi_h)|
		&\lesssim
		\| \nabla_\Ohsm E_h \ehxm \|_{L^2(\Ohsm)} \| \nabla_\Ohsm \phi_h \|_{L^2(\Ohsm)} 
		+
		\| \ehxm \|_{L^2(\Ghsm)} \| \phi_h \|_{L^2(\Ghsm)} 
		\notag\\
		&\quad
		+
		\| \nabla_\Ohsm E_h \ehxm \|_{L^\infty(\Ohsm)}
		(\| \nabla_\Ohsm \eum \|_{L^2(\Ohsm)} + \| \epm \|_{L^2(\Ohsm)}) 
		\notag\\
		&\qquad\times
		\| \nabla_\Ohsm \phi_h \|_{L^2(\Ohsm)} 
		\notag\\
		&\quad
		+
		\| \ehxm \|_{L^\infty(\Ghsm)}
		\| \ekm \|_{L^2(\Ghsm)} \| \phi_h \|_{L^2(\Ghsm)} 
		,
		\label{eq:Jum}\\
		%		| J_p^m (\psi_h)|
		%		&\lesssim
		%		\| \nabla_\Ohsm E_h \ehxm \|_{L^2(\Ohsm)} \| \psi_h \|_{L^2(\Ohsm)} 
		%		\notag\\
		%		&\quad
		%		+
		%		\| \nabla_\Ohsm \eum \|_{L^2(\Ohsm)} \| \nabla_\Ohsm E_h \ehxm \|_{L^\infty(\Ohsm)} \| \psi_h \|_{L^2(\Ohsm)} 
		%		 ,
		%		\label{eq:Jpm}\\
		| J_x^m (\chi_h)|
		&\lesssim
		\min\Big\{\|  \ehxm \|_{L^2(\Ghsm)} \| \chi_h \|_{L^2(\Ghsm)},
		\|  \ehxm \|_{H^1(\Ghsm)} \| \chi_h \|_{H^{-1}(\Ghsm)} 
		\Big\}
		\notag\\
		&\quad
		+
		\|  \ehxm \|_{L^\infty(\Ghsm)}
		\Big(\| \eum \|_{L^2(\Ghsm)} + \Big\| \frac{\exM-\ehxm}{\tau} \Big\|_{L^2(\Ghsm)}\Big) \| \chi_h \|_{L^2(\Ghsm)}
		\label{eq:Jxm} , \\
		| J_\kappa^m (\eta_h)|
		&\lesssim
		\big(1 + \tau + \| \nabla_\Ghsm (\exM - \ehxm) \|_{L^\infty(\Ghsm)}\big)
		\notag\\
		&\quad
		\times
		\min\Big\{\| \ehxm \|_{L^2(\Ghsm)} \| \nabla_\Ghsm \eta_h \|_{L^2(\Ghsm)}, 
		\| \nabla_\Ghsm \ehxm \|_{L^2(\Ghsm)} \| \eta_h \|_{L^2(\Ghsm)}
		\Big\}
		\notag\\
		&\quad
		+
		\| \ehxm \|_{L^2(\Ghsm)} \| \eta_h \|_{L^2(\Ghsm)} 
		+
		\| \ehxm \|_{L^\infty(\Ghsm)}
		\| \ekm \|_{L^2(\Ghsm)} \| \eta_h \|_{L^2(\Ghsm)} 
		\label{eq:Jkm}
		.
	\end{align}
\end{lemma}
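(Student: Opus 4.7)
The plan is to treat each of the nine summands $J_u^{m,i}$, $J_x^{m,i}$, $J_\kappa^{m,i}$ as a domain perturbation of a bilinear form --- from $\Ohm$ to $\Ohsm$ in the bulk case and from $\Ghm$ to $\Ghsm$ at the interface --- and to apply Lemma \ref{lemma:e-blinear-b} (including its symmetric-gradient and $\nabla f\,g$ variants) or Lemma \ref{lemma:e-blinear} (together with its $H^{-1}$-dual variant \eqref{lemma:e-blinear-coro}) after splitting each finite element integrand into a smooth interpolant plus a numerical error: $\uhm = I_h(u^m\circ\Phi^m) + \eum$, $\phm = \bar I_h(p^m\circ\Phi^m) + \epm$, $\khm = I_h(H^m n^m)^{-\ell} + \ekm$. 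On the smooth piece I would use the H\"older split $(p,q,r) = (2,\infty,2)$ and absorb its bounded $W^{1,\infty}$-norm into $C_{\mum,\num}$; on the error piece I would use $(p,q,r) = (\infty,2,2)$, so that the $L^\infty$-norm of the mesh perturbation multiplies the $L^2$-norm of the numerical error. Summed over the three summands of $J_u^m$, these contributions reproduce the four lines of \eqref{eq:Jum} termwise.

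For $J_x^m$: the scheme identity \eqref{eq:2phase-h3} lets me view $(\XhM - \Xhm)/\tau$ and $\uhm$ as the same finite element function, and the geometric relation \eqref{eq:geo_rel_3} further writes this as $(\exM - \ehxm)/\tau + I_h(u^m + g^m)$ at the nodes of $\Ghsm$. Applying Lemma \ref{lemma:e-blinear} with $(p,q,r) = (\infty,2,2)$ to the error contribution yields the second line of \eqref{eq:Jxm}; the two branches of the $\min$ in the first line come respectively from Lemma \ref{lemma:e-blinear} applied to the smooth pieces (one H\"older choice) and from the $H^{-1}$-dual estimate \eqref{lemma:e-blinear-coro} paired with $\ehxm \in H^1$ (the other).

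For $J_\kappa^m$: the mass piece $J_\kappa^{m,1}$ has the same structure as $J_u^{m,3}$ and is handled identically. For the stiffness piece $J_\kappa^{m,2}$ I would write, as finite element functions on $\Ghsm$, $\XhM = \mathrm{id}_\Ghsm + \ehxm + (\exM - \ehxm) + \tau I_h(u^m + g^m)$, which under the induction hypothesis \eqref{eq:ind-hypo} gives $\|\nabla_\Ghsm \XhM\|_{L^\infty(\Ghsm)} \lesssim 1 + \tau + \|\nabla_\Ghsm(\exM - \ehxm)\|_{L^\infty(\Ghsm)}$. Applying a gradient-gradient version of Lemma \ref{lemma:e-blinear} with H\"older $(2,\infty,2)$ then produces the first branch of the $\min$ in \eqref{eq:Jkm}; the second branch follows by exploiting the symmetry of the bilinear form in its two gradient arguments.

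The main obstacle is that the gradient-gradient interface bilinear form in $J_\kappa^{m,2}$ is not literally among the cases stated in Lemma \ref{lemma:e-blinear}, so one has to re-run the $Q_1^m + Q_2^m + Q_3^m$ decomposition of \eqref{eq:Q-decomp} with the mass pairing $f_h g_h$ replaced by the inner product of surface gradients; this is routine but requires care with the surface divergence of tensor-valued integrands, and relies again on the super-approximation estimate (Lemma \ref{lemma:T<=N2}). A secondary bookkeeping task is producing both branches of each $\min$ consistently --- one via a direct H\"older choice, the other via either duality (for $J_x^m$) or the symmetry of the stiffness form (for $J_\kappa^m$) --- but all contributions then add up termwise to the claimed bounds in \eqref{eq:Jum}--\eqref{eq:Jkm}.
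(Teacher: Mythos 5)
Your treatment of the bulk terms $J_u^{m,1}$, $J_u^{m,2}$, $J_u^{m,3}$, of $J_x^{m,1}$, $J_x^{m,2}$, and of $J_\kappa^{m,1}$ matches the paper's proof termwise: the same interpolant-plus-error split, the same H\"older choices $(2,\infty,2)$ on the smooth piece and $(\infty,2,2)$ on the error piece in Lemma~\ref{lemma:e-blinear-b} and Lemma~\ref{lemma:e-blinear}, and the same use of \eqref{eq:geo_rel_3} and the $H^{-1}$-dual estimate \eqref{lemma:e-blinear-coro} to produce the two branches of the $\min$ in \eqref{eq:Jxm}.

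The one place you diverge from the paper is $J_\kappa^{m,2}$. The paper does not re-derive this term; it identifies $J_\kappa^{m,2}$ as exactly the interface stiffness discrepancy error that has been analyzed in the mean-curvature-flow literature and cites \cite[Eqs.\ (5.16), (5.17), (5.22)]{BL24FOCM} together with the induction hypothesis and an inverse inequality. Your plan to re-run the $Q_1^m+Q_2^m+Q_3^m$ decomposition of \eqref{eq:Q-decomp} with the mass pairing replaced by a gradient--gradient pairing is the right idea (it is what underlies the cited estimates), and you correctly flag that the super-approximation machinery of Lemma~\ref{lemma:T<=N2} and careful bookkeeping with tensor-valued surface divergences will be needed. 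However, your claimed mechanism for the second branch of the $\min$ in \eqref{eq:Jkm2} --- ``symmetry of the bilinear form in its two gradient arguments'' --- does not deliver what you want. The symmetry of $\int \nabla_\Ghsm\XhM\cdot\nabla_\Ghsm\eta_h$ swaps the roles of $\XhM$ and $\eta_h$, but the $\min$ in \eqref{eq:Jkm2} trades the single gradient between $\ehxm$ (the perturbation parameter) and $\eta_h$, and $\ehxm$ is \emph{not} one of the two arguments of the form. The actual source of both branches is the $\Nsm$/$\Tsm$ decomposition of $\ehxm$ together with the orthogonality $\Nsm\cdot\nabla_\Gm = 0$ (which moves the gradient off the normal part) and Lemma~\ref{lemma:T<=N2} on the tangential part, precisely as in the proof of Lemma~\ref{lemma:e-blinear}; a naive H\"older split plus symmetry of the stiffness form would leave you with $\|\nabla_\Ghsm\ehxm\|_{L^2}\|\nabla_\Ghsm\eta_h\|_{L^2}$, which has one gradient too many for either branch. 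This is a genuine (if localized) gap: the ingredient is named but the step that closes it is misattributed.
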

\begin{proof}
	Using Lemma \ref{lemma:e-blinear-b}, we first derive as follows:
	\begin{align}\label{eq:Jum1}
		&|J_u^{m,1}(\phi_h)|
		=
		\bigg|\int_{\Ohm} \D_\Ohm \uhm \cdot \D_\Ohm \phi_h
		-
		\int_{\Ohsm} \D_\Ohsm \uhm \cdot \D_\Ohsm \phi_h \bigg|
		\notag\\
		&\leq	
		\bigg|\int_{\Ohm} \D_\Ohm \eum \cdot \D_\Ohm \phi_h
		-
		\int_{\Ohsm} \D_\Ohsm \eum \cdot \D_\Ohsm \phi_h \bigg|
		\notag\\
		&\quad
		+
		\bigg|\int_{\Ohm} \D_\Ohm I_h(u^m\circ\Phi^m) \cdot \D_\Ohm \phi_h
		-
		\int_{\Ohsm} \D_\Ohsm I_h(u^m\circ\Phi^m) \cdot \D_\Ohsm \phi_h \bigg|
		\notag\\
		&\lesssim
		\| \nabla_\Ohsm E_h \ehxm \|_{L^{\infty}(\Ohsm)} \| \nabla_\Ohsm \eum \|_{L^{2}(\Ohsm)} \| \nabla_\Ohsm \phi_h \|_{L^{2}(\Ohsm)}
		\notag\\
		&\quad
		+
		\| \nabla_\Ohsm E_h \ehxm \|_{L^{2}(\Ohsm)} 
		%	\| \nabla_\Ohsm I_h(u^m\circ\Phi^m) \|_{L^{\infty}(\Ohsm)} 
		\| \nabla_\Ohsm \phi_h \|_{L^{2}(\Ohsm)} ,
		%	\notag\\
		%	&\hspace{200pt}\mbox{(Lemma \ref{lemma:e-blinear-b} is used)}
		%	\notag\\
		%	&\lesssim
		%	\| \nabla_\Ohsm E_h \ehxm \|_{L^{\infty}(\Ohsm)} \| \nabla_\Ohsm \eum \|_{L^{2}(\Ohsm)} \| \nabla_\Ohsm \phi_h \|_{L^{2}(\Ohsm)}
		%	\notag\\
		%	&\quad
		%	+
		%	\| \nabla_\Ohsm E_h \ehxm \|_{L^{2}(\Ohsm)} \| \nabla_\Ohsm \phi_h \|_{L^{2}(\Ohsm)}
		%	\qquad\mbox{(Lemma \ref{lemma:Rh-bbd} is used)}
		%	\notag\\
		%	&\quad
		%	+
		%	h^{-d/2} \| \nabla_\Ohsm E_h \ehxm \|_{L^{2}(\Ohsm)} \| \nabla_\Ohsm E_h \ehxm \|_{L^{\infty}(\Ohsm)} \| \nabla_\Ohsm \eum \|_{L^2(\Ohsm)} \| \nabla_\Ohsm \phi_h \|_{L^{2}(\Ohsm)}
		%		\notag\\
		%	&\lesssim
		%	\| \nabla_\Ohsm E_h \ehxm \|_{L^{\infty}(\Ohsm)} \| \nabla_\Ohsm \eum \|_{L^{2}(\Ohsm)} \| \nabla_\Ohsm \phi_h \|_{L^{2}(\Ohsm)}
		%	\notag\\
		%	&\quad
		%	+
		%	\| \nabla_\Ohsm E_h \ehxm \|_{L^{2}(\Ohsm)} \| \nabla_\Ohsm \phi_h \|_{L^{2}(\Ohsm)} \qquad\mbox{(\eqref{eq:ind-hypo} is used)} .
	\end{align}
	%where we have used the induction hypothesis \eqref{eq:ind-hypo} in the last inequality.
	where we have used the boundedness of $\| \nabla_\Ohsm I_h(u^m\circ\Phi^m) \|_{L^{\infty}(\Ohsm)}$.
	
	%and from Lemma \ref{lemma:Rh-bbd} again,
	
	Similarly, 
	%using Lemma \ref{lemma:Rh-bbd} and \ref{lemma:e-blinear-b}, we get
	\begin{align}\label{eq:Jum2}
		&| J_u^{m,2}(\phi_h) |
		=
		\bigg|
		\int_\Ohm \phm \nabla_\Ohm \cdot \phi_h
		-
		\int_\Ohsm \phm \nabla_\Ohsm \cdot \phi_h
		\bigg|
		\notag\\
		&\leq
		\bigg|
		\int_\Ohm \epm \nabla_\Ohm \cdot \phi_h
		-
		\int_\Ohsm \epm \nabla_\Ohsm \cdot \phi_h
		\bigg|
		\notag\\
		&\quad
		+
		\bigg|	
		\int_\Ohm \bar I_h(p^m\circ\Phi^m) \nabla_\Ohm \cdot \phi_h
		-
		\int_\Ohsm \bar I_h(p^m\circ\Phi^m) \nabla_\Ohsm \cdot \phi_h
		\bigg|
		\notag\\
		&\lesssim
		\| \nabla_\Ohsm E_h \ehxm \|_{L^{\infty}(\Ohsm)} \| \epm \|_{L^{2}(\Ohsm)} \| \nabla_\Ohsm \phi_h \|_{L^{2}(\Ohsm)}
		\notag\\
		&\quad
		+
		\| \nabla_\Ohsm E_h \ehxm \|_{L^{2}(\Ohsm)} \| \nabla_\Ohsm \phi_h \|_{L^{2}(\Ohsm)} .
	\end{align}
	From Lemma \ref{lemma:e-blinear},
	\begin{align}\label{eq:Jum3}
		&|J_u^{m,3}(\phi_h)|
		=
		\bigg|
		\int_\Ghm \khm \cdot \phi_h
		-
		\int_\Ghsm \khm \cdot \phi_h
		\bigg|
		\notag\\
		&\leq
		\bigg|
		\int_\Ghm \ekm \cdot \phi_h
		-
		\int_\Ghsm \ekm \cdot \phi_h
		\bigg|
		%	\notag\\
		%	&\quad
		+
		\bigg|
		\int_\Ghm I_h(\Hm\nm) \cdot \phi_h
		-
		\int_\Ghsm I_h(\Hm\nm) \cdot \phi_h
		\bigg|
		\notag\\
		&\lesssim
		\|  \ehxm \|_{L^\infty(\Ghsm)} \| \ekm \|_{ L^2(\Ghsm)}\|  \phi_h \|_{L^2(\Ghsm)}
		+
		\|  \ehxm \|_{L^2(\Ghsm)} \|  \phi_h \|_{L^2(\Ghsm)} .
	\end{align}
	We conclude \eqref{eq:Jum} from \eqref{eq:Jum1}--\eqref{eq:Jum3}.
	
	%The proof of \eqref{eq:Jpm} is exactly the same as \eqref{eq:Jum1} with $\nabla_\Ohsm \phi_h$ replaced by $\psi_h$.
	
	%From the geometric relation \eqref{eq:geo_rel_3},
	Analogous to $J_u^{m,3}$, using \eqref{eq:geo_rel_3} and \eqref{lemma:e-blinear-coro}, we have
	\begin{align}\label{eq:Jxm1}
		&|J_x^{m,1}(\chi_h)|
		=
		\bigg|
		\int_\Ghm \frac{\XhM-\Xhm}{\tau} \cdot \chi_h
		-
		\int_\Ghsm \frac{\XhM-\Xhm}{\tau} \cdot \chi_h
		\bigg|
		\notag\\
		&\leq
		\bigg|
		\int_\Ghm \frac{\exM-\ehxm}{\tau} \cdot \chi_h
		-
		\int_\Ghsm \frac{\exM-\ehxm}{\tau} \cdot \chi_h
		\bigg|
		\notag\\
		&\quad
		+
		\bigg|
		\int_\Ghm I_h(u^m+g^m)^{-\ell} \cdot \chi_h
		-
		\int_\Ghsm I_h(u^m+g^m)^{-\ell} \cdot \chi_h
		\bigg|
%		\qquad\mbox{(\eqref{eq:geo_rel_3} is used)}
		\notag\\
		&\lesssim
		\|  \ehxm \|_{L^\infty(\Ghsm)} \Big\| \frac{\exM-\ehxm}{\tau} \Big\|_{ L^2(\Ghsm)}\|  \chi_h \|_{L^2(\Ghsm)}
		\notag\\
		&\quad
		+
		\min\Big\{\|  \ehxm \|_{L^2(\Ghsm)} \| \chi_h \|_{L^2(\Ghsm)},
		\|  \ehxm \|_{H^1(\Ghsm)} \| \chi_h \|_{H^{-1}(\Ghsm)} 
		\Big\} 
%		\qquad\mbox{(\eqref{lemma:e-blinear-coro} is used)}
		,
	\end{align}
	and
	\begin{align}\label{eq:Jxm2}
		&|J_x^{m,2}(\chi_h)|
		=
		\bigg|
		\int_\Ghm \uhm \cdot \chi_h
		-
		\int_\Ghsm \uhm \cdot \chi_h
		\bigg|
		\notag\\
		&\leq
		\bigg|
		\int_\Ghm \eum \cdot \chi_h
		-
		\int_\Ghsm \eum \cdot \chi_h
		\bigg|
		%	\notag\\
		%	&\quad
		+
		\bigg|
		\int_\Ghm I_h (u^m)^{-\ell} \cdot \chi_h
		-
		\int_\Ghsm I_h (u^m)^{-\ell} \cdot \chi_h
		\bigg|
		\notag\\
		&\lesssim
		\|  \ehxm \|_{L^\infty(\Ghsm)} \| \eum \|_{ L^2(\Ghsm)}\|  \chi_h \|_{L^2(\Ghsm)}
		\notag\\
		&\quad
		+
		\min\Big\{\|  \ehxm \|_{L^2(\Ghsm)} \| \chi_h \|_{L^2(\Ghsm)},
		\|  \ehxm \|_{H^1(\Ghsm)} \| \chi_h \|_{H^{-1}(\Ghsm)} 
		\Big\} .
	\end{align}
	Estimates \eqref{eq:Jxm1}--\eqref{eq:Jxm2} lead to \eqref{eq:Jxm}.
	
	Finally, $J_\kappa^{m,1}$ has the same structure as $J_u^{m,3}$ and therefore following \eqref{eq:Jum3}
	\begin{align}\label{eq:Jkm1}
		|J_\kappa^{m,1}(\eta_h)|
		\lesssim
		\|  \ehxm \|_{L^\infty(\Ghsm)} \| \ekm \|_{ L^2(\Ghsm)}\|  \eta_h \|_{L^2(\Ghsm)}
		+
		\|  \ehxm \|_{L^2(\Ghsm)} \|  \eta_h \|_{L^2(\Ghsm)},
	\end{align}
	and $J_\kappa^{m,2}$ happens to be the domain discrepancy error of the interface stiffness bilinear form which is the central object in the numerical analysis for mean curvature flow and surface diffusion and has been well studied in \cite{BL24FOCM,BL22A,BL24}. From \cite[Eqs. (5.16), (5.17) and (5.22)]{BL24FOCM} together with the induction hypothesis \eqref{eq:ind-hypo} and the inverse inequality,
	\begin{align}\label{eq:Jkm2}
		&| J_\kappa^{m,2} (\eta_h)|
		\lesssim
		\big(1 + \tau + \| \nabla_\Ghsm (\exM - \ehxm) \|_{L^\infty(\Ghsm)}\big)
		%	(1 + \| \nabla_\Ghsm \ehxm \|_{L^\infty(\Ghsm)})
		\notag\\
		&\qquad
		\times
		\min\Big\{\| \ehxm \|_{L^2(\Ghsm)} \| \nabla_\Ghsm \eta_h \|_{L^2(\Ghsm)}, 
		\| \nabla_\Ghsm \ehxm \|_{L^2(\Ghsm)} \| \eta_h \|_{L^2(\Ghsm)}
		\Big\} .
	\end{align}
	Thus, \eqref{eq:Jkm1}--\eqref{eq:Jkm2} imply \eqref{eq:Jkm}.
	
	The proof of Lemma \ref{lemma:Jm} is complete.
\end{proof}
$J_u^m(\cdot)$, $J_x^m(\cdot)$ and $J_\kappa^m(\cdot)$ can be viewed as linear functionals on $S_h(\Ohsm)^d$, $S_h(\Ghsm)^d$ and $S_h(\Ghsm)^d$, respectively.
%	Later in Section \ref{sec:parab}, we will test the error equations \eqref{eq:eu}--\eqref{eq:ek} with $(\phi_h, \chi_h, \eta_h)=(\eum, \ekm, \frac{\exM-\ehxm}{\tau})$. 
According to the main discrete parabolic estimate \eqref{eq:parab}, the relevant norms for the $J^m$s are
$$\| J_u^m(\cdot) \|_{H^{-1}(\Ohsm)},\quad \| J_x^m(\cdot) \|_{H^{1}(\Ghsm)},\quad \mbox{and}\quad \| J_\kappa^m(\cdot) \|_{H^{-1/2}(\Ghsm)} .$$
These norm estimates can be obtained in a manner similar to \eqref{eq:dpm-H1/2}. For instance,
\begin{align}
	&\| J_x^m(\cdot) \|_{H^{1}(\Ghsm)}
	\lesssim
	\|  \ehxm \|_{H^1(\Ghsm)}
	+
	\|  \ehxm \|_{L^\infty(\Ghsm)}
	\notag\\
	&\qquad\times
	\Big(\| \eum \|_{L^2(\Ghsm)} + \Big\| \frac{\exM-\ehxm}{\tau} \Big\|_{L^2(\Ghsm)}\Big) 
	\sup_{0\neq\chi_h\in S_h} \frac{\| \chi_h \|_{L^2(\Ghsm)}}{\| \chi_h \|_{H^{-1}(\Ghsm)}}
	\notag\\
	&\qquad\lesssim
	\|  \ehxm \|_{H^1(\Ghsm)}
	+
	h^{-1}
	\|  \ehxm \|_{L^\infty(\Ghsm)}
	\Big(\| \eum \|_{L^2(\Ghsm)} + \Big\| \frac{\exM-\ehxm}{\tau} \Big\|_{L^2(\Ghsm)}\Big) .
	\notag
\end{align}
Here we have used the following inverse inequality
\begin{align}
	\| \chi_h \|_{L^2(\Ghsm)}
	\lesssim
	h^{-1}
	\| \chi_h \|_{H^{-1}(\Ghsm)} , \notag
\end{align}
whose derivation is similar to \eqref{eq:inv-H-1/2}.
The right-hand side of the estimate for $\| J_x^m(\cdot) \|_{H^{1}(\Ghsm)}$ consists of a stable term $\|  \ehxm \|_{H^1(\Ghsm)}$ and a product of errors that represents a small perturbation. Hence, $J_x^m$ is harmless. Similar arguments apply to $J_u^m$ and $J_\kappa^m$ as well.

\subsection{Error estimates on the moving interface}

%From the analysis in Dziuk and BGN papers, we know
In view of the error equations for $x$ and $\kappa$ (i.e., \eqref{eq:ex} and \eqref{eq:ek}, respectively), the standard parabolic (for variable $x$) and elliptic (for variable $\kappa$) error estimates immediately give
\begin{align}
	\Big\| \frac{\exM-\ehxm}{\tau} \Big\|_{L^{2}(\Ghsm)}
	&\lesssim
	\| \eum \|_{L^{2}(\Ghsm)}
	+
	\| J_x^m(\cdot) \|_{L^{2}(\Ghsm)}
	+
	\| d_x^m(\cdot) \|_{L^{2}(\Ghsm)}
	\label{eq:ev-est} , \\
	\Big\| \frac{\exM-\ehxm}{\tau} \Big\|_{H^{1/2}(\Ghsm)}
	&\lesssim
	\| \eum \|_{H^{1/2}(\Ghsm)}
	+
	\| J_x^m(\cdot) \|_{H^{1/2}(\Ghsm)}
	+
	\| d_x^m(\cdot) \|_{H^{1/2}(\Ghsm)}
	\label{eq:ev-est1} , \\
	\Big\| \frac{\exM-\ehxm}{\tau} \Big\|_{H^{1}(\Ghsm)}
	&\lesssim
	\| \eum \|_{H^{1}(\Ghsm)}
	+
	\| J_x^m(\cdot) \|_{H^{1}(\Ghsm)}
	+
	\| d_x^m(\cdot) \|_{H^{1}(\Ghsm)}
	\label{eq:ev-est2} , \\
	\frac{\| \exM \|_{L^2(\Ghsm)}^2 - \| \ehxm \|_{L^2(\Ghsm)}^2}{\tau}
	&\lesssim
	\epsilon
	\| \eum \|_{L^{2}(\Ghsm)}^2
	+
	\epsilon^{-1}
	\| \exM \|_{L^{2}(\Ghsm)}^2
	\notag\\
	&\qquad\qquad
	+
	\| J_x^m(\cdot) \|_{L^{2}(\Ghsm)}^2
	+
	\| d_x^m(\cdot) \|_{L^{2}(\Ghsm)}^2
	\label{eq:ex-est-L2} , \\
	\| \ekm \|_{H^{-1}(\Ghsm)}
	&\lesssim
	\| \exM \|_{H^{1}(\Ghsm)}
	+
	\| J_\kappa^m(\cdot) \|_{H^{-1}(\Ghsm)}
	+
	\| d_\kappa^m(\cdot) \|_{H^{-1}(\Ghsm)}.
	\label{eq:ek-est}
\end{align}
Substituting the estimates for $J^m s$ (Lemma \ref{lemma:Jm}) into the above estimates, after simplification we obtain 
\begin{align}
	\Big\| \frac{\exM-\ehxm}{\tau} \Big\|_{L^{2}(\Ghsm)}
	&\lesssim
	\| \eum \|_{L^{2}(\Ghsm)}
	+
	\| \ehxm \|_{L^{2}(\Ghsm)}
	+
	\| d_x^m(\cdot) \|_{L^{2}(\Ghsm)}
	\label{eq:ev-est-s} , \\
	\Big\| \frac{\exM-\ehxm}{\tau} \Big\|_{H^{1/2}(\Ghsm)}
	&\lesssim
	\| \eum \|_{H^{1/2}(\Ghsm)}
	+
	\| \ehxm \|_{H^{1}(\Ghsm)}
	+
	\| d_x^m(\cdot) \|_{H^{1/2}(\Ghsm)}
	\label{eq:ev-est1-s} , \\
	\Big\| \frac{\exM-\ehxm}{\tau} \Big\|_{H^{1}(\Ghsm)}
	&\lesssim
	\| \eum \|_{H^{1}(\Ghsm)}
	+
	\| \ehxm \|_{H^{1}(\Ghsm)}
	+
	\| d_x^m(\cdot) \|_{H^{1}(\Ghsm)}
	\label{eq:ev-est2-s} , \\
	\frac{\| \exM \|_{L^2(\Ghsm)}^2 - \| \ehxm \|_{L^2(\Ghsm)}^2}{\tau}
	&\lesssim
	\epsilon
	\| \eum \|_{L^{2}(\Ghsm)}^2
	+
	\epsilon^{-1}
	\| \ehxm \|_{L^{2}(\Ghsm)}^2
	+
	\| d_x^m(\cdot) \|_{L^{2}(\Ghsm)}^2
	\label{eq:ex-est-L2-s} , \\
	\| \ekm \|_{H^{-1}(\Ghsm)}
	&\lesssim
	\| \ehxm \|_{H^{1}(\Ghsm)}
	+
	\| d_\kappa^m(\cdot) \|_{H^{-1}(\Ghsm)}
	\notag\\
	&\qquad
	+
	\tau(
	\| \eum \|_{H^{1}(\Ghsm)}
	+
	\| d_x^m(\cdot) \|_{H^1(\Ghsm)}
	)
	\label{eq:ek-est-s} .
\end{align}

\subsection{$L^2$ and $H^{-1/2}$ error estimates for pressure}\label{sec:epm}

We define the averaging operator $\bar\phi := \phi - |\Ohsm|^{-1} \int_\Ohsm \phi$, which is bounded from $W^{s, q}(\Ohsm)$ to $W^{s, q}(\Ohsm) \cap L_0^q(\Ohsm)$ for any $s\geq 0$ and $q\in [1,\infty]$. 
Simple consequences are $(\phi, \bar \psi)_\Ohsm = (\bar\phi, \bar \psi)_\Ohsm = (\bar\phi, \psi)_\Ohsm$ and, if $\epm$ and $\phm$ are identified as finite element functions on $\Ohsm$, then $\epm - \bar e_p^m = p_h^m - \bar p_h^m = |\Ohsm|^{-1} \int_\Ohsm \phm$. Note that we also have $\int_\Ohm \phm = 0$, since $\phm\in Q_h^{k-1}(\Ohm)$.

Since the discrete inf-sup (or the discrete Ladyzhenskaya–Babuška–Brezzi (LBB) condition) is a local concept (cf. \cite[Section 8.5.3]{BBF13book}), it is satisfied in the two-phase $\mathbb P^{k}$-$\mathbb P^{k-1}$ iso-parametric setting considered in this paper. 
The discrete inf-sup condition together with the error equation of $\eum$ (i.e., \eqref{eq:eu}) implies
\begin{align}\label{eq:ep-est}
	&\| \epm \|_{L^2(\Ohsm)}
	\leq
	\| \bar e_p^m \|_{L^2(\Ohsm)}
	+
	\| \epm - \bar e_p^m \|_{L^2(\Ohsm)}
	\notag\\
	&\lesssim
	\sup_{\phi_h\in V_h^k(\Ohsm)^d} \frac{(\nabla_\Ohsm \cdot \phi_h, \bar e_p^m)_\Ohsm}{\| \nabla_\Ohsm \phi_h \|_{L^2(\Ohsm)}}
	+
	\| \epm - \bar e_p^m \|_{L^2(\Ohsm)}
	\quad\mbox{(by inf-sup condition)}
	\notag\\
	&\leq
	\sup_{\phi_h\in V_h^k(\Ohsm)^d} \frac{(\nabla_\Ohsm \cdot \phi_h, e_p^m)_\Ohsm}{\| \nabla_\Ohsm \phi_h \|_{L^2(\Ohsm)}}
	+
	2 \| \epm - \bar e_p^m \|_{L^2(\Ohsm)}
	\quad\mbox{(by H\"older's inequality)}
	\notag\\
	&\lesssim
	\sup_{\phi_h\in V_h^k(\Ohsm)^d}
	\frac{1}{\| \nabla_\Ohsm \phi_h \|_{L^2(\Ohsm)}}
	\bigg(\int_{\Ohsm} 2\nu \D_\Ohsm \eum \cdot \D_\Ohsm \phi_h
	\notag\\
	&\qquad
	+
	\int_\Ghsm \ekm \cdot \phi_h
	+
	J_u^{m}(\phi_h)
	+
	d_u^{m}(\phi_h) \bigg)
	+
	\Big| \int_\Ohm \phm - \int_\Ohsm \phm \Big|
	\notag\\
	&\lesssim
	\| \D_\Ohsm \eum \|_{L^2(\Ohsm)}
	+
	h^{-1/2}\| \ekm \|_{H^{-1}(\Ghsm)}
	+
	\| J_u^m(\cdot) \|_{H^{-1}(\Ohsm)}
	+
	\| d_u^{m}(\cdot) \|_{H^{-1}(\Ohsm)} 
	\notag\\
	&\quad
	+
	(1 + \| \epm \|_{L^2(\Ohsm)})
	\| \nabla_\Ohsm E_h \ehxm \|_{L^2(\Ohsm)}
	\notag\\
	&\lesssim
	\| \D_\Ohsm \eum \|_{L^2(\Ohsm)}
	+
	{h^{-1/2}}\| \ekm \|_{H^{-1}(\Ghsm)}
	+
	\| d_u^{m}(\cdot) \|_{H^{-1}(\Ohsm)} 
	\notag\\
	&\quad
	+
	\| \nabla_\Ohsm E_h \ehxm \|_{L^2(\Ohsm)}
	+
	\| \ehxm \|_{L^{2}(\Ghsm)}
	\qquad\mbox{(using \eqref{eq:Jum})}
	\notag\\
	&\quad
	+
	\| \nabla_\Ohsm E_h \ehxm \|_{L^\infty(\Ohsm)}
	(\| \nabla_\Ohsm \eum \|_{L^2(\Ohsm)} + \| \epm \|_{L^2(\Ohsm)}) 
	\notag\\
	&\quad
	+
	\| \ehxm \|_{L^\infty(\Ghsm)}  \| \ekm \|_{L^2(\Ghsm)}
	\notag\\
	&\quad
	+
	(1 + \| \epm \|_{L^2(\Ohsm)})
	\| \nabla_\Ohsm E_h \ehxm \|_{L^2(\Ohsm)}
	\notag\\
	&\lesssim
	\| \D_\Ohsm \eum \|_{L^2(\Ohsm)}
	+
	{h^{-1/2}}\| \ekm \|_{H^{-1}(\Ghsm)}
	+
	\| \ehxm \|_{H^{1/2}(\Ghsm)}
	+
	\| d_u^{m}(\cdot) \|_{H^{-1}(\Ohsm)} 
	,
\end{align}
where in the last line we have used the induction hypothesis \eqref{eq:ind-hypo}, the reverse trace inequality \eqref{eq:Eh-map} and the absorption of $\epm$ into the left-hand side.

To avoid the blow-up factor $h^{-1/2}$ on the right-hand side of \eqref{eq:ep-est}, we examine the weaker $H^{-1/2}$ norm on $\epm$. First, we need a useful lemma (cf. \cite[Corollary 1.5]{FS94} and \cite[Theorem 3.4]{GHH2006}).
\begin{lemma}\label{lemma:H-1}
	Let $s\in[-1, 1]$, $q\in(1,\infty)$ and $\hat W^{s, q}(\Omega) := W^{s, q}(\Omega) \cap L_0^q(\Omega)$ equipped with the $\| \cdot \|_{W^{s, q}(\Omega)}$ norm. Then there exists a bounded linear operator $R: \hat W^{s, q}(\Omega) \rightarrow W_0^{s+1,q}(\Omega)^d$ such that $\nabla\cdot (Rg) = g$ for all $g\in \hat W^{s, q}(\Omega)$.
	
	Specifically, taking $(s, q) = (1/2, 2)$, we have
	\begin{align}
		\| R g \|_{H^{3/2}(\Omega)}
		\leq
		C
		\| g \|_{H^{1/2}(\Omega)},\quad\forall g\in H^{1/2}(\Omega) \cap L_0^2(\Omega) .
		\notag
	\end{align}
\end{lemma}
Since $\Ohsm$ differs from $\Omega$ by an optimal perturbation error, Lemma \ref{lemma:H-1} is still applicable to $\Ohsm$ (up to lifting back from $\Ohsm$ to $\Omega$). For simplicity of presentation, we omit these optimal-order perturbation errors. Then, similar to the $L^2$ pressure estimate, we can derive
\begin{align}\label{eq:ep-est-H-1}
	&\| \epm \|_{H^{-1/2}(\Ohsm)}
	\leq
	\| \epm \|_{H_0^{-1/2}(\Ohsm)}
	\leq
	\| \bar e_p^m \|_{H_0^{-1/2}(\Ohsm)}
	+
	\| \epm - \bar e_p^m \|_{H_0^{-1/2}(\Ohsm)}
	\notag\\
	&=
	\sup_{\phi\in H^{1/2}(\Ohsm)} \frac{(\phi, \bar e_p^m)_\Ohsm}{\| \phi \|_{H^{1/2}(\Ohsm)}}
	+
	\| \epm - \bar e_p^m \|_{H_0^{-1/2}(\Ohsm)}
	\notag\\
	&\lesssim
	\sup_{\phi\in H^{1/2}(\Ohsm)} \frac{(\bar\phi, e_p^m)_\Ohsm}{\| \bar\phi \|_{H^{1/2}(\Ohsm)}}
	+
	\| \epm - \bar e_p^m \|_{H_0^{-1/2}(\Ohsm)}
	\notag\\
	&\lesssim
	\sup_{\phi\in H^{1/2}\cap L_0^2(\Ohsm)} \frac{(\phi, \epm)_\Ohsm}{\| \phi \|_{H^{1/2}(\Ohsm)}}
	+
	\Big| \int_\Ohm \phm - \int_\Ohsm \phm \Big|
	\notag\\
	&
	\lesssim
	\sup_{\phi\in C_c^\infty(\Ohsm)^d} \frac{(\nabla_\Ohsm \cdot \phi, \epm)_\Ohsm}{\| \phi \|_{H^{3/2}(\Ohsm)}}
	+
	\Big| \int_\Ohm \phm - \int_\Ohsm \phm \Big|
	\qquad\mbox{(using Lemma \ref{lemma:H-1})}
	\notag\\
	&
	=
	\sup_{\phi\in C_c^\infty(\Ohsm)^d} \frac{
		(\nabla_\Ohsm \cdot I_h^{\rm SZ} \phi, \epm)_\Ohsm
		+
		(\nabla_\Ohsm \cdot (1-I_h^{\rm SZ}) \phi, \epm)_\Ohsm 
	}
	{\| \phi \|_{H^{3/2}(\Ohsm)}}
	\notag\\
	&\quad
	+
	\Big| \int_\Ohm \phm - \int_\Ohsm \phm \Big|
	\notag\\
	&=
	\sup_{\phi\in C_c^\infty(\Ohsm)^d}
	\frac{1}{\| \phi \|_{H^{3/2}(\Ohsm)}}
	\bigg(
	\int_{\Ohsm} 2\nu \D_\Ohsm \eum \cdot \D_\Ohsm I_h^{\rm SZ} \phi
	\notag\\
	&\qquad
	+
	\int_{\Ohsm} \nabla_\Ohsm \cdot (1- I_h^{\rm SZ}) \phi\, \epm
	+
	\int_\Ghsm \ekm \cdot \phi
	-
	\int_\Ghsm \ekm \cdot (1 - I_h^{\rm SZ}) \phi
	\notag\\
	&\qquad
	+
	J_u^{m}(I_h^{\rm SZ} \phi)
	+
	d_u^{m}(I_h^{\rm SZ} \phi) \bigg)
	+
	\Big| \int_\Ohm \phm - \int_\Ohsm \phm \Big|
	\notag\\
	&\lesssim
	\| \D_\Ohsm \eum \|_{H^{-1/2}(\Ohsm)}
	+
	h^{1/2}
	\| \D_\Ohsm \eum \|_{L^{2}(\Ohsm)}
	+
	h^{1/2} \| \epm \|_{L^2(\Ohsm)}
	\notag\\
	&\quad
	+
	\| \ekm \|_{H^{-1}(\Ghsm)}
	+
	h
	\| \ekm \|_{L^{2}(\Ghsm)}
	+
	\| J_u^m(\cdot) \|_{H^{-1}(\Ohsm)}
	+
	\| d_u^{m}(\cdot) \|_{H^{-1}(\Ohsm)} 
	\notag\\
	&\quad
	+
	(1 + \| \epm \|_{L^2(\Ohsm)})
	\| \nabla_\Ohsm E_h \ehxm \|_{L^2(\Ohsm)}
	\notag\\
	&\lesssim
	\| \D_\Ohsm \eum \|_{H^{-1/2}(\Ohsm)}
	+
	h^{1/2}
	\| \D_\Ohsm \eum \|_{L^{2}(\Ohsm)}
	+
	\| \ekm \|_{H^{-1}(\Ghsm)}
	\notag\\
	&\quad
	+
	\| \ehxm \|_{H^{1/2}(\Ghsm)}
	+
	\| d_u^{m}(\cdot) \|_{H^{-1}(\Ohsm)} 
	,
\end{align}
where, in the second-to-last inequality, we have used interpolation error estimates, H\"older's inequality, and Lemma \ref{lemma:e-blinear-b}, and in the last inequality, we have used the estimates for $J_u^m$ and $\epm$, i.e., \eqref{eq:Jum} and \eqref{eq:ep-est} respectively, and have applied the inverse inequality.

\subsection{Main parabolicity structure}\label{sec:parab}
We test the error equations \eqref{eq:eu}--\eqref{eq:ek} with $(\phi_h, \chi_h, \eta_h)=\big(\eum, \ekm, \frac{\exM-\ehxm}{\tau}\big)$ and add them up to obtain:
\begin{align}\label{eq:parab}
	&
	%	2\nu 
	\int_{\Ohsm} 2\nu \D_\Ohsm \eum \cdot \D_\Ohsm \eum
	+
	\int_\Ghsm \nabla_\Ghsm \frac{\exM-\ehxm}{\tau}  \cdot \nabla_\Ghsm \exM 
	\notag\\
	&=
	%	\int_\Ohsm \epm \nabla_\Ohsm \cdot \eum
	-
	\Big(\int_\Ohm \nabla_\Ohm \cdot  \uhm \epm
	-
	\int_\Ohsm \nabla_\Ohsm \cdot  \uhm \epm
	\Big)
	%	\int_{\Ohsm} \epm \nabla_\Ohsm \cdot \eum
	%	\notag\\
	%	&\quad
	%	\int_\Ohsm \epm \nabla_\Ohsm \cdot \eum
	%	\notag\\
	%	&\quad
	\notag\\
	&\quad
	-J_u^m(\eum)
	%	-J_p^m(\epm)
	-J_x^m(\ekm)
	-J_\kappa^m\Big(\frac{\exM-\ehxm}{\tau}\Big)
	%	-
	%	K_u^m(\eum)
	\notag\\
	&\quad
	-d_u^{m}(\eum)
	-d_p^m(\epm)
	-d_x^m(\ekm)
	-d_\kappa^m\Big(\frac{\exM-\ehxm}{\tau}\Big) ,
\end{align}
%Note that the pressure term vanished due to the discrete divergence-free relation \eqref{eq:Rhp-div}.
where we have used the following identity resulting from \eqref{eq:2phase-hs2}:
% the first term on the right-hand side can be bounded using \eqref{eq:Rhp} and the discrete divergence-free relation \eqref{eq:Rhp-div}
\begin{align}
	\int_\Ohsm \epm \nabla_\Ohsm \cdot \eum
	&=
	-d_p^{m}(\epm) 
		-
		\Big(\int_\Ohm \nabla_\Ohm \cdot  \uhm \epm
		-
		\int_\Ohsm \nabla_\Ohsm \cdot  \uhm \epm
		\Big)
%	\qquad\mbox{(\eqref{eq:2phase-hs2} is used)} 
	\notag .
\end{align}
Analogous to Lemma \ref{lemma:e-blinear-b}, using the mapping property \eqref{eq:Eh-map}, we can show
\begin{align}\label{eq:uhm-epm}
	&\Big|
	\int_\Ohm \nabla_\Ohm \cdot  \uhm \epm
	-
	\int_\Ohsm \nabla_\Ohsm \cdot  \uhm \epm
	\Big|
	\notag\\
	&\quad\lesssim
	\|  \ehxm \|_{H^1(\Ghsm)} 
	\big(\|\epm\|_{H^{-1/2}(\Ohsm)}
	+
	h^{1/2}\|\epm\|_{L^{2}(\Ohsm)}
	\big)
%	\notag\\
%	&\quad
	+
	h^{1/2}
	\| \eum \|_{H^1(\Ohsm)} 
	\|\epm\|_{L^{2}(\Ohsm)} .
\end{align}
The detailed proof of \eqref{eq:uhm-epm} can be found in Appendix \ref{sec:H3/2}.
%\begin{align}
%	\int_\Ohsm \epm \nabla_\Ohsm \cdot \eum
%	%	&=
%	%	\int_\Ohsm \epm \nabla_\Ohsm \cdot (R_h - I_h) (u^m\circ\Phi^m)
%	%	\qquad\mbox{(\eqref{eq:Rhp-div} is used)}
%	%	\notag\\
%	&=
%	-d_p^{m}(\epm) 
%	%	{\r
	%		-
	%		\int_\Ohsm (\nabla_\Ohm - \nabla_\Ohsm) \cdot  \uhm \epm
	%		%		}
%	\qquad\mbox{(\eqref{eq:2phase-hs2} is used)} 
%	\notag .
%\end{align}
%Analogous to Lemma \ref{lemma:e-blinear-b}, using the mapping property \eqref{eq:Eh-map}, we can show
%\begin{align}\label{eq:uhm-epm}
%	\Big|\int_\Ohsm (\nabla_\Ohm - \nabla_\Ohsm) \cdot  \uhm \epm \Big|
%	\lesssim
%	\|  \ehxm \|_{H^1(\Ghsm)} 
%	\|\epm\|_{H^{-1/2}(\Ohsm)}
%	+
%	{\epsilon}
%	\|  \nabla_\Ohsm \eum \|_{H^1(\Ohsm)} 
%	\|\epm\|_{H^{-1/2}(\Ohsm)} .
%\end{align}
Applying \eqref{eq:uhm-epm}, H\"older's inequality and Young's inequality $-ab\geq -\frac{1}{2}(a^2+b^2)$ to \eqref{eq:parab}, we get
\begin{align}
	&
	%	{\r 4\nu} 
	\| \D_\Ohsm \eum \|_{L^2(\Ohsm)}^2
	+
	\frac{\| \nabla_\GhsM \ehxM \|_{L^2(\GhsM)}^2 - \| \nabla_\Ghsm \ehxm \|_{L^2(\Ghsm)}^2}{\tau}
	\notag\\
	&\lesssim
	\frac{\| \nabla_\GhsM \ehxM \|_{L^2(\GhsM)}^2 - \| \nabla_\Ghsm \exM \|_{L^2(\Ghsm)}^2}{\tau}
	\notag\\
	&\quad
	+
	\epsilon\| \eum \|_{H^{1}(\Ohsm)}^2
	+
	%	\epsilon 
	h \| \epm \|_{L^{2}(\Ohsm)}^2
	+
	\epsilon\| \epm \|_{H^{-1/2}(\Ohsm)}^2
	\notag\\
	&\quad
	+
	\epsilon^{-1}
	\| \ehxm \|_{H^{1}(\Ghsm)}^2
	+
	\| \ekm \|_{H^{-1}(\Ghsm)}^2
	+
	\epsilon
	\Big\| \frac{\exM - \ehxm}{\tau} \Big\|_{H^{1/2}(\Ghsm)}^2
	\notag\\
	&\quad
	+
	\epsilon^{-1}\| J_u^m(\cdot) \|_{H^{-1}(\Ohsm)}^2
	+
	\| J_x^m(\cdot) \|_{H^{1}(\Ghsm)}^2
	+
	\epsilon^{-1}
	\| J_\kappa^m(\cdot) \|_{H^{-1/2}(\Ghsm)}^2
	\notag\\
	&\quad
	+
	\epsilon^{-1}\| d_u^{m}(\cdot) \|_{H^{-1}(\Ohsm)}^2
	+
	%	\epsilon^{-1}
	h^{-1}
	\| d_p^m(\cdot) \|_{L^{2}(\Ohsm)}^2
	+
	\| d_x^m(\cdot) \|_{H^{1}(\Ghsm)}^2
	+
	\epsilon^{-1}
	\| d_\kappa^m(\cdot) \|_{H^{-1/2}(\Ghsm)}^2
	. \notag
\end{align}
Substituting the estimates for the $J^m$-terms, $(\exM-\ehxm)/\tau$, $\ekm$, and $\epm$, i.e., Lemma \ref{lemma:Jm}, \eqref{eq:ev-est1-s}, \eqref{eq:ek-est-s} and \eqref{eq:ep-est-H-1},
\begin{align}\label{eq:eu-est}
	&
	%	{\r 4\nu} 
	\| \D_\Ohsm \eum \|_{L^2(\Ohsm)}^2
	+
	\frac{\| \nabla_\GhsM \ehxM \|_{L^2(\GhsM)}^2 - \| \nabla_\Ghsm \ehxm \|_{L^2(\Ghsm)}^2}{\tau}
	\notag\\
	&\lesssim
	\frac{\| \nabla_\GhsM \ehxM \|_{L^2(\GhsM)}^2 - \| \nabla_\Ghsm \exM \|_{L^2(\Ghsm)}^2}{\tau}
	%	\notag\\
	%	&\quad
	+
	\epsilon\| \eum \|_{H^{1}(\Ohsm)}^2
	+
	\epsilon^{-1}
	\| \ehxm \|_{H^{1}(\Ghsm)}^2
	\notag\\
	&\quad
	+
	\epsilon^{-1}\| d_u^{m}(\cdot) \|_{H^{-1}(\Ohsm)}^2
	+
	%	\epsilon^{-1} 
	h^{-1}
	\| d_p^m(\cdot) \|_{L^{2}(\Ohsm)}^2
	+
	\| d_x^m(\cdot) \|_{H^{1}(\Ghsm)}^2
	+
	\epsilon^{-1}
	\| d_\kappa^m(\cdot) \|_{H^{-1/2}(\Ghsm)}^2
	.
\end{align}
The first term on the right-hand side is a norm conversion error which shall be shown stable in Lemma \ref{lemma:e-convert} later.

\begin{remark}\label{rmk:BGN}
	\upshape
	For the BGN-type formulation, \eqref{eq:2phase-h3} should be replaced by
	\begin{align}
		\int_\Ghm \frac{\XhM-\Xhm}{\tau} \cdot \nhm\, \nhm\cdot \chi_h &=
		\int_\Ghm \uhm \cdot \chi_h
		\qquad\forall \chi_h\in S_h(\Ghm)^{d},
	\end{align}
	where $\nhm$ denotes the unit outer normal vector of $\Ghm$. Due to the presence of $\nhm$, the corresponding error equation \eqref{eq:ex} will contain an additional term of the form $\int_\Ghsm \nabla_\Ghsm\ehxm\, \chi_h$. Unfortunately, this extra error term cannot be controlled by the left-hand side of the main parabolic estimate \eqref{eq:parab} when choosing $\chi_h = \ekm$.
\end{remark}

\subsection{Time-marching estimates}
\label{sec:marching}
%Starting from \eqref{eq:eu-est},
%\begin{align}
%	&\| \D_\Ohsm \eum \|_{L^2(\Ohsm)}^2
%	+
%	\frac{\| \nabla_\Ghsm \exM \|_{L^2(\Ghsm)}^2}{\tau}
%	\notag\\
%%	&\lesssim
%%	\frac{\| \nabla_\Ghsm \ehxm \|_{L^2(\Ghsm)}^2}{\tau}
%%	+
%%	\epsilon\| \eum \|_{H^{1}(\Ohsm)}^2
%%	+
%%	\epsilon\| \epm \|_{H^{-1/2}(\Ohsm)}^2
%%	+
%%	\epsilon^{-1}
%%	\| \ehxm \|_{H^{1}(\Ghsm)}^2
%%	+
%%	\epsilon
%%	\Big\| \frac{\exM - \ehxm}{\tau} \Big\|_{H^{\r1/2}(\Ghsm)}^2
%%	\notag\\
%%	&\quad
%%	+
%%	\| \nabla_\Ohsm E_h \ehxm \|_{L^\infty(\Ohsm)}^2
%%	\| \epm \|_{L^2(\Ohsm)}^2
%%	+
%%	h^{-1}
%%	\| \ehxm \|_{L^\infty(\Ghsm)}^2
%%	\| \ekm \|_{L^2(\Ghsm)}^2
%%	\notag\\
%%	&\quad
%%	+
%%	\epsilon^{-1}\| d_u^{m}(\cdot) \|_{H^{-1}(\Ohsm)}^2
%%	+
%%	\epsilon^{-1}\| d_p^{m}(\cdot) \|_{H^{1/2}(\Ohsm)}^2
%%	+
%%	\| d_x^m(\cdot) \|_{H^{1}(\Ghsm)}^2
%%	+
%%	\epsilon^{-1}
%%	\| d_\kappa^m(\cdot) \|_{H^{\r-1/2}(\Ghsm)}^2
%%	\notag\\
%%	&\hspace{200pt}\mbox{(Lemma \ref{lemma:Jm}, \eqref{eq:Eh-map} and \eqref{eq:ind-hypo} are used)}
%%	\notag\\
%	&\lesssim
%	\frac{\| \nabla_\Ghsm \ehxm \|_{L^2(\Ghsm)}^2}{\tau}
%	+
%	\epsilon\| \eum \|_{H^{1}(\Ohsm)}^2
%	+
%	\epsilon^{-1}
%	\| \ehxm \|_{L^{2}(\Ghsm)}^2
%	\notag\\
%	&\quad
%	+
%	\epsilon^{-1}\| d_u^{m}(\cdot) \|_{H^{-1}(\Ohsm)}^2
%	+
%	\epsilon^{-1}\| d_p^{m}(\cdot) \|_{H^{1/2}(\Ohsm)}^2
%	+
%	\| d_x^m(\cdot) \|_{H^{1}(\Ghsm)}^2
%	+
%	\epsilon^{-1}
%	\| d_\kappa^m(\cdot) \|_{H^{\r-1/2}(\Ghsm)}^2
%%	\notag\\
%%	&\hspace{200pt}\mbox{(\eqref{eq:ep-est}, \eqref{eq:ev-est1-s} and \eqref{eq:ek-est-s} are used)}
%	\notag
%	.
%\end{align}
By absorbing $\eum$ on the right-hand side of \eqref{eq:eu-est} into the left-hand side and using the estimates for $d^m$s (Lemma \ref{lemma:dm}), we obtain
\begin{align}
	&\| \D_\Ohsm \eum \|_{L^2(\Ohsm)}^2
	+
	\frac{\| \nabla_\Ghsm \exM \|_{L^2(\Ghsm)}^2}{\tau}
	\lesssim
	\frac{\| \nabla_\Ghsm \ehxm \|_{L^2(\Ghsm)}^2}{\tau}
	+
	\epsilon^{-1}
	\| \ehxm \|_{H^1(\Ghsm)}^2
	\notag\\
	&\qquad\qquad\qquad\qquad\qquad\qquad\qquad
	+
	\epsilon^{-1} \big((1+\musm)h^{k-1/2} +(1+\nusm)h^{k+\alpha(d)} + \tau\big)^2
	\notag
	,
\end{align}
where $\alpha(2)=0,\alpha(3)=-1/2$.
%From the induction hypothesis \eqref{eq:ind-hypo},
Consequently,
\begin{align}
	\| \nabla_\Ghsm \exM \|_{L^2(\Ghsm)}
	&\lesssim
	\| \nabla_\Ghsm \ehxm \|_{L^2(\Ghsm)}
	+
	\tau^{1/2}\big(\tau+h^{k-1} + \| \ehxm \|_{L^{2}(\Ghsm)}\big)
	,
	\label{eq:ex-bbd}\\
	\| \nabla_\Ohsm \eum \|_{L^2(\Ohsm)}
	&\lesssim
	\tau^{-1/2}
	\| \nabla_\Ghsm \ehxm \|_{L^2(\Ghsm)}
	+
	\big(\tau+h^{k-1} + \| \ehxm \|_{L^{2}(\Ghsm)}\big)
	\label{eq:ex-bbd1}
	,
\end{align}
where we have used the boundedness
$\mu_{*,m} h^{k-1/2} \leq h^{k-1}$ and $\nu_{*,m} h^{k+\alpha(d)} \leq h^{k-1}$,
for some $h\leq h_{\musm,\nusm,\mum,\num}$.
Substituting these new a priori estimates \eqref{eq:ex-bbd}--\eqref{eq:ex-bbd1} back into \eqref{eq:ev-est1-s},
\begin{align}\label{eq:ev-bbd}
	&\Big\| \frac{\exM-\ehxm}{\tau} \Big\|_{H^{1/2}(\Ghsm)}
	\lesssim
	\| \eum \|_{H^{1}(\Ohsm)}
	+
	\| \ehxm \|_{H^{1/2}(\Ghsm)}
	+
	\tau
	\notag\\
	&\qquad\lesssim
	\tau^{-1/2}
	\| \nabla_\Ghsm \ehxm \|_{L^2(\Ghsm)}
	+
	\| \ehxm \|_{L^{2}(\Ghsm)}
	+
	\tau+h^{k-1}
	,
\end{align}
and, consequently, using the discrete Sobolev inequality (cf. \cite[Lemma 4.9.2]{Brenner08}), we obtain
\begin{align}\label{eq:ev-bbd1}
	&\Big\| \frac{\exM-\ehxm}{\tau} \Big\|_{L^{\infty}(\Ghsm)}
	\lesssim
	|\log h| h^{-1/2}
	\Big\| \frac{\exM-\ehxm}{\tau} \Big\|_{H^{1/2}(\Ghsm)}
%	\qquad\mbox{(Sobolev embedding)}
	\notag\\
	&\qquad\lesssim
	|\log h| h^{-1/2}
	\Big(\tau^{-1/2}
	\| \nabla_\Ghsm \ehxm \|_{L^2(\Ghsm)}
	+
	\| \ehxm \|_{L^{2}(\Ghsm)}
	+
	\tau+h^{k-1}\Big)
	.
\end{align}
Note that \eqref{eq:ev-bbd} and \eqref{eq:ev-bbd1} contain a priori information for the position $\exM$ at the next time level, i.e., $t_{m+1}$.
The a priori estimate for $\ehxM$ can be derived from the geometric relations \eqref{eq:geo_rel_1}--\eqref{eq:geo_rel_2},
\begin{align}\label{eq:ehx-bbd}
	\| \ehxM \|_{L^\infty(\Ghsm)}
	&\leq
	\| I_h (\NsM\circ \hat X_{h,*}^{m+1} \exM) \|_{L^\infty(\Ghsm)}
	+
	\| r_h^{m+1} \|_{L^\infty(\Ghsm)}
	\notag\\
	&\lesssim
	\| I_h (\NsM\circ \hat X_{h,*}^{m+1} \exM) \|_{L^\infty(\Ghsm)}
	+
	\| I_h (\TsM\circ \hat X_{h,*}^{m+1} \exM) \|_{L^\infty(\Ghsm)}^2
	\notag\\
	&\lesssim
	\| \exM \|_{L^\infty(\Ghsm)}
	+
	\| \exM \|_{L^\infty(\Ghsm)}^2
	\notag\\
	&\lesssim
	|\log h| \| \exM \|_{H^1(\Ghsm)}
	+
	|\log h|^2
	\| \exM \|_{H^1(\Ghsm)}^2
	,
\end{align}
where we have applied the discrete Sobolev inequality in the last line.

With 
the geometric relations \eqref{eq:geo_rel_4}--\eqref{eq:geo_rel_6} and the induction hypothesis \eqref{eq:ind-hypo}, a similar argument to \cite[Eq. (4.91)]{BL24} leads to the result stated in the following lemma.
\begin{lemma}\label{lemma:hatX-W1inf}
	We have the time-marching stability estimates in $W^{1,\infty}$ norm:
	\begin{align}
		\| I_h(\nsM\circ \hat X_{h,*}^{m+1}-\nsm\circ \hat X_{h,*}^{m}) \|_{W^{1,\infty}(\Ghsm)}
		\lesssim
		\tau + \| \hat X_{h,*}^{m+1} - \hat X_{h,*}^{m} \|_{W^{1,\infty}(\Ghsm)} ,
		\notag
	\end{align}
	and
	\begin{align}
		\| \hat X_{h,*}^{m+1} - \hat X_{h,*}^{m} \|_{W^{1,\infty}(\Ghsm)}
		\lesssim
		\tau + \tau \Big\| \frac{\exM - \ehxm}{\tau} \Big\|_{W^{1,\infty}(\Ghsm)} .
		\notag
	\end{align}
	Consequently, in view of \eqref{eq:ev-bbd} and the induction hypothesis \eqref{eq:ind-hypo}, the $W^{1,q},q\in[1,\infty]$, norms on $\Ghsm$ and $\GhsM$ are equivalent (cf. \cite[Lemma 7.2]{KLL19}).
\end{lemma}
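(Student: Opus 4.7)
The plan is to prove the two inequalities in a coupled fashion, since the normal/tangential decomposition of the nodal displacement needed in the second inequality will produce exactly the normal-vector difference controlled by the first inequality. Throughout, all identities are read nodally on $\Ghsm$ and then extended to $W^{1,\infty}(\Ghsm)$ via the identification of iso-parametric finite element functions by nodal values, which is legitimate under the induction hypothesis \eqref{eq:ind-hypo}.

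For the first inequality, I would write the triangle decomposition
\begin{align*}
\nsM\circ\hat X_{h,*}^{m+1}-\nsm\circ\hat X_{h,*}^{m}
= (\nsM-\nsm)\circ\hat X_{h,*}^{m}
+ \bigl[\nsM\circ\hat X_{h,*}^{m+1}-\nsM\circ\hat X_{h,*}^{m}\bigr].
\end{align*}
The first summand is of size $O(\tau)$ in $W^{1,\infty}$ since the exact extended normal field evolves smoothly in time. For the second summand, the nonlinear super-approximation estimate (Lemma \ref{lemma:super_conv-nonlinear}) gives $W^{1,\infty}$ control of $I_h$ applied to a smooth composition in terms of $\| \hat X_{h,*}^{m+1} - \hat X_{h,*}^{m} \|_{W^{1,\infty}(\Ghsm)}$, exactly as needed.

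For the second inequality, I decompose the displacement nodally as $\hat X_{h,*}^{m+1}-\hat X_{h,*}^{m}=\Nsm(\hat X_{h,*}^{m+1}-\hat X_{h,*}^{m})+\Tsm(\hat X_{h,*}^{m+1}-\hat X_{h,*}^{m})$. For the normal piece \eqref{eq:geo_rel_4}--\eqref{eq:geo_rel_5} give $(X^{m+1}-{\rm id})\circ a^m + \rho_h^m$, whose leading term contributes $O(\tau)$ in $W^{1,\infty}$ by smoothness of the exact flow, while the remainder $\rho_h^m$ is quadratic in $\tau$ and in $|T_*^m(\hat X_{h,*}^{m+1}-\hat X_{h,*}^{m})|$. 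For the tangential piece \eqref{eq:geo_rel_6} expresses it as $T_*^m(X_h^{m+1}-X_h^m) + T_*^m(\NsM\circ\hat X_{h,*}^{m+1}-\Nsm\circ\hat X_{h,*}^{m})\ehxM$, and \eqref{eq:geo_rel_3} substitutes $X_h^{m+1}-X_h^m = \exM-\ehxm+\tau I_h(u^m+g^m)$ at nodes, producing a bound of the form $\tau + \tau\|(\exM-\ehxm)/\tau\|_{W^{1,\infty}(\Ghsm)}$ plus a cross term involving the quantity estimated in Step 1 multiplied by $\|\ehxM\|_{W^{1,\infty}(\Ghsm)}$.

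The main obstacle is the closed self-referential structure: the cross term is bounded via Step 1 by $\|\hat X_{h,*}^{m+1}-\hat X_{h,*}^{m}\|_{W^{1,\infty}(\Ghsm)}\cdot\|\ehxM\|_{W^{1,\infty}(\Ghsm)}$, and the quadratic remainder $\rho_h^m$ contributes $\|T_*^m(\hat X_{h,*}^{m+1}-\hat X_{h,*}^{m})\|_{L^\infty(\Ghsm)}^2$. Both feed back into the left-hand side. The resolution is to invoke the a priori bound \eqref{eq:ehx-bbd} with the induction hypothesis \eqref{eq:ind-hypo}, which delivers $\|\ehxM\|_{W^{1,\infty}(\Ghsm)} \lesssim h$ up to controllable error terms, and a standard absorption argument then closes the loop for sufficiently small $h$. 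Without the nonlinear super-approximation result, the $W^{1,\infty}$-stability of the interpolated composition of $\nsM$ with the finite element map $\hat X_{h,*}^{m+1}$ would fail, and the coupled estimate could not be closed.
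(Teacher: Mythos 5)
Your proposal follows essentially the same route as the paper: the triangle-inequality split of the normal-vector difference combined with Lemma \ref{lemma:super_conv-nonlinear}, the nodal $N/T$ decomposition feeding \eqref{eq:geo_rel_3}--\eqref{eq:geo_rel_6}, the recognition that the cross term couples the two inequalities, and a final absorption using \eqref{eq:ind-hypo} and \eqref{eq:ehx-bbd}. Be aware that the paper closes the absorption by first establishing an intermediate $L^\infty$ bound on $\hat X_{h,*}^{m+1}-\hat X_{h,*}^m$ and then applying an inverse inequality so the relevant smallness factor is $h^{-1}\|\ehxM\|_{L^\infty(\Ghsm)}$, not your claimed $\|\ehxM\|_{W^{1,\infty}(\Ghsm)}\lesssim h$ (which need not hold, particularly for $d=3$); this is a quantitative imprecision rather than a structural gap.
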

\begin{proof}
	First, we use Lipschitz continuity of $n_*(\cdot,\cdot)$ and the nonlinear super-approximation (Lemma \ref{lemma:super_conv-nonlinear}) to get
	\begin{align}\label{nsM-nsm-nodes}
		&\| I_h(\nsM\circ \hat X_{h,*}^{m+1}-\nsm\circ \hat X_{h,*}^{m}) \|_{L^\infty(\Ghsm)} 
		\notag\\
		&\leq
		\| I_h(n_*(\cdot,t_{m+1})\circ \hat X_{h,*}^{m} - n_*(\cdot,t_{m})\circ \hat X_{h,*}^{m}) \|_{L^\infty(\Ghsm)}  
		\notag\\
		&\quad
		+
		\| I_h(n_*(\cdot,t_{m+1})\circ \hat X_{h,*}^{m+1} - n_*(\cdot,t_{m+1})\circ \hat X_{h,*}^{m}) \|_{L^\infty(\Ghsm)}  \notag\\
		&\lesssim
		\tau
		+
		\| \hat X_{h,*}^{m+1} - \hat X_{h,*}^{m} \|_{L^\infty(\Ghsm)}
		%		\qquad\mbox{(Lemma \ref{lemma:super_conv-nonlinear} is used)} 
		,
		%		\notag\\
		%		&\lesssim
		%		\tau
		%		+
		%		\| \hat X_{h,*}^{m+1} - \hat X_{h,*}^{m} \|_{L^\infty(\Ghsm)}
		%		\notag\\
		%		&=|n_*^{m+1}(\hat X_{h,*}^{m+1}) -n_*^{m+1}(\hat X_{h,*}^{m}) 
		%		+ n_*^{m+1}(\hat X_{h,*}^{m}) - n_*^{m+1}(X_{h,*}^{m+1}) \notag\\
		%		&+ n(X_{h,*}^{m+1},t_{m+1}) - n(\hat X_{h,*}^{m},t_{m}) | \notag\\
		%		&\lesssim|\hat X_{h,*}^{m+1}-\hat X_{h,*}^{m}|
		%		+ |\hat X_{h,*}^{m} - X_{h,*}^{m+1}| 
		%		+ \tau \quad\mbox{at the nodes} \notag\\
		%		&\lesssim |\hat X_{h,*}^{m+1}-\hat X_{h,*}^{m}|
		%		+ \tau \quad\mbox{at the nodes} ,
	\end{align}
	%	\begin{align}\label{nsM-nsm-nodes}
		%		|\nsM-\nsm| 
		%		=\,& 
		%		|n(\hat X_{h,*}^{m+1},t_{m+1}) - n(\hat X_{h,*}^{m},t_{m})| \notag\\
		%		=\, &|n_*^{m+1}(\hat X_{h,*}^{m+1}) -n_*^{m+1}(\hat X_{h,*}^{m}) 
		%		+ n_*^{m+1}(\hat X_{h,*}^{m}) - n_*^{m+1}(X_{h,*}^{m+1}) \notag\\
		%		&+ n(X_{h,*}^{m+1},t_{m+1}) - n(\hat X_{h,*}^{m},t_{m}) | \notag\\
		%		\lesssim\, &|\hat X_{h,*}^{m+1}-\hat X_{h,*}^{m}|
		%		+ |\hat X_{h,*}^{m} - X_{h,*}^{m+1}| 
		%		+ \tau \quad\mbox{at the nodes} \notag\\
		%		\lesssim\, &|\hat X_{h,*}^{m+1}-\hat X_{h,*}^{m}|
		%		+ \tau \quad\mbox{at the nodes} ,
		%	\end{align}
	and similarly,
	\begin{align}
		%		\label{nsM-nsm-nodes1}
		\| I_h(\nsM\circ \hat X_{h,*}^{m+1}-\nsm\circ \hat X_{h,*}^{m}) \|_{W^{1,\infty}(\Ghsm)} 
		%		\notag\\
		%		&\leq
		%		\| I_h(n_*(\cdot,t_{m+1})\circ \hat X_{h,*}^{m} - n_*(\cdot,t_{m})\circ \hat X_{h,*}^{m}) \|_{W^{1,\infty}(\Ghsm)}  
		%		\notag\\
		%		&\quad
		%		+
		%		\| I_h(n_*(\cdot,t_{m+1})\circ \hat X_{h,*}^{m+1} - n_*(\cdot,t_{m+1})\circ \hat X_{h,*}^{m}) \|_{W^{1,\infty}(\Ghsm)}  \notag\\
		\lesssim
		\tau
		+
		\| \hat X_{h,*}^{m+1} - \hat X_{h,*}^{m} \|_{W^{1,\infty}(\Ghsm)}
		%		\qquad\mbox{(Lemma \ref{lemma:super_conv-nonlinear} is used)} 
		.
		\notag
	\end{align}
	This proves the first result.
	
	Second, we estimate the numerical displacement as follows:
	\begin{align}\label{eq:hat_X_s_diff}
		&\| \hat X_{h,*}^{m+1} - \hat X_{h,*}^{m} \|_{L^{\infty}(\Ghsm)} \notag\\ 
		&\le 
		\| \hat X_{h,*}^{m+1} - X_{h}^{m+1} \|_{L^{\infty}(\Ghsm)}
		+\| X_{h}^{m+1} - X_{h}^{m} \|_{L^{\infty}(\Ghsm)}
		+ \| X_{h}^{m} - \hat X_{h,*}^{m} \|_{L^{\infty}(\Ghsm)} \notag\\ 
		&=
		\| \hat e_{x}^{m+1} \|_{L^{\infty}(\Ghsm)}
		+ \|\exM - \ehxm + \tau I_h((u^m + g^m)\circ a^m|_\Ghsm)  \|_{L^{ \infty}(\Ghsm)} 
		+\| \ehxm \|_{L^{\infty}(\Ghsm)} \notag\\
		&\lesssim
		\tau 
		+
		\| \ehxM \|_{L^{\infty}(\Ghsm)}
		+ \| \exM \|_{L^{ \infty}(\Ghsm)} 
		+\| \ehxm \|_{L^{\infty}(\Ghsm)}
		\notag\\
		&\lesssim
		\tau + |\log h| \| \ehxm \|_{H^1(\Ghsm)}
		\qquad\mbox{(using \eqref{eq:ind-hypo}, \eqref{eq:ev-bbd}--\eqref{eq:ehx-bbd})}
		.
	\end{align}
	This implies
	\begin{align}
		&\| \hat X_{h,*}^{m+1} - \hat X_{h,*}^{m} \|_{W^{1,\infty}(\Ghsm)}
		\notag\\
		&\leq
		\| I_h\Nsm (\hat X_{h,*}^{m+1} - \hat X_{h,*}^{m}) \|_{W^{1,\infty}(\Ghsm)}
		+
		\| I_h\Tsm (\hat X_{h,*}^{m+1} - \hat X_{h,*}^{m}) \|_{W^{1,\infty}(\Ghsm)}
		\notag\\
		&\lesssim
		\tau
		+
		h^{-1}
		(\tau^2 + \| I_h\Tsm (\hat X_{h,*}^{m+1} - \hat X_{h,*}^{m}) \|_{L^{\infty}(\Ghsm)}^2)
		\notag\\
		&\quad+
		\tau
		+
		\| I_h\Tsm (\exM - \ehxm) \|_{W^{1,\infty}(\Ghsm)}
		\notag\\
		&\quad
		+
		h^{-1}
		\| I_h\Tsm(\nsM\circ \hat X_{h,*}^{m+1}-\nsm\circ \hat X_{h,*}^{m}) \|_{L^\infty(\Ghsm)} 
		\| \ehxM \|_{L^\infty(\Ghsm)}
		\notag\\
		&\hspace{130pt}\mbox{(using \eqref{eq:geo_rel_3}--\eqref{eq:geo_rel_6} and the inverse inequality)}
		\notag\\
		&\lesssim
		h^{-1}(\tau + |\log h| \| \ehxm \|_{H^1(\Ghsm)})
		\| I_h\Tsm (\hat X_{h,*}^{m+1} - \hat X_{h,*}^{m}) \|_{L^{\infty}(\Ghsm)}
		\notag\\
		&\quad
		+
		\tau
		+
		\| I_h\Tsm (\exM - \ehxm) \|_{W^{1,\infty}(\Ghsm)}
		\notag\\
		&\quad
		+
		h^{-1}
		(\tau + \| \hat X_{h,*}^{m+1} - \hat X_{h,*}^{m} \|_{L^{\infty}(\Ghsm)})
		\| \ehxM \|_{L^\infty(\Ghsm)}
%		\notag\\
%		&\hspace{200pt}
		\quad
		\mbox{(using \eqref{nsM-nsm-nodes} and \eqref{eq:hat_X_s_diff})} . \notag
	\end{align}
	In view of \eqref{eq:ind-hypo} and \eqref{eq:ev-bbd}--\eqref{eq:ehx-bbd}, we see that the two terms containing $\hat X_{h,*}^{m+1} - \hat X_{h,*}^{m}$ on the right-hand side can be absorbed into the left-hand side.
	Therefore, we obtain the second result:
	\begin{align}
		%		\label{eq:Xhat-diff3}
		\| \hat X_{h,*}^{m+1} - \hat X_{h,*}^{m} \|_{W^{1,\infty}(\Ghsm)}
		\lesssim
		\tau
		+
		\| I_h\Tsm (\exM - \ehxm) \|_{W^{1,\infty}(\Ghsm)} .
		\notag
	\end{align}
\end{proof}
With this lemma, we get an $H^1$-version of \eqref{eq:ehx-bbd} using a very similar argument:
\begin{align}\label{eq:ehx-bbd1}
	\| \ehxM \|_{H^1(\Ghsm)}
	&\lesssim
	\| I_h \Nsm \exM \|_{H^1(\Ghsm)}
	+
	\| I_h (\NsM - \Nsm) \exM \|_{H^1(\Ghsm)}
	+
	\| r_h^{m+1} \|_{H^1(\Ghsm)}
	\notag\\
	&\lesssim
	\|  \exM \|_{H^1(\Ghsm)}
	+
	h^{-1}
	\| \exM \|_{L^4(\Ghsm)}^2
	\notag\\
	&\quad
	+
	(\tau + \| \exM - \ehxm \|_{W^{1,\infty}(\Ghsm)})
	\| \exM \|_{H^1(\Ghsm)}
	\qquad\mbox{(by Lemma \ref{lemma:hatX-W1inf})}
	%	&\hspace{200pt}\mbox{(Lemma \ref{lemma:hatX-W1inf} is used)}
	\notag\\
	%	&\lesssim
	%	\|  \exM \|_{H^1(\Ghsm)}
	%	+
	%	h^{1/2-d/2}\| \exM - \ehxm \|_{H^{1}(\Ghsm)}
	%	\| \exM \|_{H^1(\Ghsm)}
	%	+
	%	h^{-1} |\log h|^2
	%	\| \exM \|_{H^1(\Ghsm)}^2
	%	\notag\\
	&\lesssim
	\|  \exM \|_{H^1(\Ghsm)}
	%	+
	%	\| \exM - \ehxm \|_{H^{1}(\Ghsm)}
	\qquad\mbox{(by induction hypothesis \eqref{eq:ind-hypo})}
	\notag\\
	&\lesssim
	\| \nabla_\Ghsm \ehxm \|_{L^2(\Ghsm)}
	+
	\tau^{1/2}
	\| \ehxm \|_{L^2(\Ghsm)}
	+
	\tau^{1/2}(\tau+h^{k-1})
	\quad\mbox{(using \eqref{eq:ex-bbd})}
	.
\end{align}
Consequently, we have another important stability estimate which will be helpful in the proof of Lemma \ref{lemma:e-convert} later.
\begin{lemma}\label{lemma:e-diff-W1inf}
	\begin{align}
		\| \exM - \ehxm \|_{W^{1,\infty}(\Ghsm)}
		\| \ehxM \|_{H^{1}(\Ghsm)}
		\lesssim
		\tau \Big\| \frac{\exM - \ehxm}{\tau} \Big\|_{H^{1/2}(\Ghsm)} .
		\notag
	\end{align}
\end{lemma}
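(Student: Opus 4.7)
\medskip

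\noindent\textbf{Proof proposal.} Since $\tau \| (\exM-\ehxm)/\tau \|_{H^{1/2}(\Ghsm)} = \| \exM - \ehxm \|_{H^{1/2}(\Ghsm)}$, the claim is equivalent to
\[
  \| \exM - \ehxm \|_{W^{1,\infty}(\Ghsm)} \,\| \ehxM \|_{H^1(\Ghsm)}
  \;\lesssim\;
  \| \exM - \ehxm \|_{H^{1/2}(\Ghsm)}.
\]
My plan is to exploit the fact that both factors on the left are finite element quantities and that we already control $\| \ehxM \|_{H^1(\Ghsm)}$ smallness through \eqref{eq:ehx-bbd1} combined with the induction hypothesis \eqref{eq:ind-hypo}. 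The strategy is to trade the strong $W^{1,\infty}$ norm for the weaker $H^{1/2}$ norm via inverse inequalities (paying $h^{-d/2}$) and absorb this cost by the factor $h^{3/2}$ hidden inside $\| \ehxM \|_{H^1(\Ghsm)}$.

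First, since $\exM - \ehxm \in S_h^k(\Ghsm)^d$, I would chain two inverse inequalities on the $(d-1)$-dimensional iso-parametric surface mesh $\Ghsm$: the fractional inverse $\| f_h \|_{H^1} \lesssim h^{-1/2} \| f_h \|_{H^{1/2}}$ followed by the Sobolev-type inverse $\| f_h \|_{W^{1,\infty}} \lesssim h^{-(d-1)/2} \| f_h \|_{H^1}$. Both estimates are classical on the flat reference mesh $\Ghsf$ and transfer to $\Ghsm$ with constants depending only on $\nu_m, \nu_{*,m}$ (and therefore absorbed in $\lesssim$). Their composition yields
\[
  \| \exM - \ehxm \|_{W^{1,\infty}(\Ghsm)}
  \;\lesssim\;
  h^{-d/2} \| \exM - \ehxm \|_{H^{1/2}(\Ghsm)}.
\]

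Second, I would plug the induction hypothesis $\|\ehxm\|_{H^1(\Ghsm)} \lesssim h^{3/2}$ and the CFL condition $\tau \lesssim h^k$ (with $k \ge 3$) into the already-established estimate \eqref{eq:ehx-bbd1}, obtaining
\[
  \| \ehxM \|_{H^1(\Ghsm)}
  \;\lesssim\;
  \| \nabla_\Ghsm \ehxm \|_{L^2(\Ghsm)} + \tau^{1/2} \| \ehxm \|_{L^2(\Ghsm)} + \tau^{1/2}(\tau + h^{k-1})
  \;\lesssim\; h^{3/2},
\]
since $\tau^{1/2}(\tau + h^{k-1}) \lesssim h^{(3k-2)/2} \lesssim h^{3/2}$ for $k \ge 3$.

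Multiplying the two displays produces the net prefactor $h^{-d/2} \cdot h^{3/2} = h^{(3-d)/2}$, which is bounded by $1$ for $d \in \{2,3\}$ and sufficiently small $h$. The only step that requires some care is the fractional inverse inequality $\| f_h \|_{H^1(\Ghsm)} \lesssim h^{-1/2} \| f_h \|_{H^{1/2}(\Ghsm)}$ on the curved iso-parametric mesh; this follows by real interpolation between the identity on $L^2$ and the standard $L^2 \!\to\! H^1$ inverse estimate, pulled back to the quasi-uniform flat mesh $\Ghsf$ where the result is classical. Apart from this, the proof is essentially bookkeeping of the inverse and a priori estimates already collected.
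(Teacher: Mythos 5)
Your argument is correct and is essentially the paper's own proof: the one-line estimate $\| \exM - \ehxm \|_{W^{1,\infty}(\Ghsm)} \lesssim h^{-d/2}\,\tau\,\| (\exM-\ehxm)/\tau \|_{H^{1/2}(\Ghsm)}$ via inverse inequalities, followed by $\| \ehxM \|_{H^1(\Ghsm)} \lesssim h^{3/2}$ from \eqref{eq:ehx-bbd1} together with the induction hypothesis \eqref{eq:ind-hypo}, so that $h^{-d/2}\cdot h^{3/2} \lesssim 1$ for $d=2,3$. You merely spell out the two-step chain of inverse inequalities and the bookkeeping for $\| \ehxM \|_{H^1}$ that the paper leaves implicit.
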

\begin{proof}
	It holds that
	\begin{align}
		\| \exM - \ehxm \|_{W^{1,\infty}(\Ghsm)}
		\| \ehxM \|_{H^{1}(\Ghsm)}
		\lesssim
		h^{-d/2}\tau \Big\| \frac{\exM - \ehxm}{\tau} \Big\|_{H^{1/2}(\Ghsm)}
		\| \ehxM \|_{H^{1}(\Ghsm)} .
		\notag
	\end{align}
	The proof is complete in view of \eqref{eq:ehx-bbd1} and the induction hypothesis \eqref{eq:ind-hypo}.
\end{proof}

\subsection{Technical lemmas}
The following orthogonality lemma shows that under an almost-orthogonality structure, it is possible to gain regularity in the upper bound by one gradient.
\begin{lemma}\label{lemma:NT_stab_ref}
	The following bound holds for any finite element functions $f_h,g_h \in S_h(\Ghsm)$:
	\begin{align}
		%		\label{eq:NT_stab}
		&\Big| \int_{\Ghsm} \nabla_{\Ghsm} I_h \Nsm f_h \cdot  \nabla_{\Ghsm}  I_h \Tsm g_h \Big|
		%		\notag\\
		%		& 
		\notag\\
		&
		\qquad\qquad
		\lesssim
		\min\Big\{
		\| f_h \|_{H^1(\Ghsm)} \| g_h \|_{L^2(\Ghsm)}
		,
		\| f_h \|_{L^2(\Ghsm)} \| g_h \|_{H^1(\Ghsm)}
		\Big\} .
		\notag
	\end{align}
\end{lemma}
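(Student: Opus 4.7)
The plan is to exploit the pointwise algebraic orthogonality $\Nsm\Tsm = 0$, which holds because $\Nsm = \nsm(\nsm)^\top$ and $\Tsm = I - \Nsm$ are complementary symmetric projectors, together with super-approximation to strip the Lagrange interpolations, and finally integration by parts on the closed surface $\Ghsm$ (which has no boundary) to shift a derivative between $f_h$ and $g_h$. The orthogonality provides the pointwise cancellation of the leading term, while IBP provides the symmetry needed to generate both branches of the $\min$.

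First I would reduce to the un-interpolated form. Writing $I_h(\Nsm f_h) = \Nsm f_h - r_N$ and $I_h(\Tsm g_h) = \Tsm g_h - r_T$, the super-approximation estimate (as in Lemma \ref{Lemma-GLW} and the material of Appendix \ref{sec:super}, combined with the inverse inequality for polynomial $f_h,g_h$) gives $\|r_N\|_{H^1(\Ghsm)} \lesssim h\|f_h\|_{H^1(\Ghsm)}$ and $\|r_T\|_{H^1(\Ghsm)}\lesssim h\|g_h\|_{H^1(\Ghsm)}$. Every cross contribution that contains at least one remainder is then at most $h\|f_h\|_{H^1(\Ghsm)}\|g_h\|_{H^1(\Ghsm)}$, which the inverse inequality collapses into either $\|f_h\|_{H^1}\|g_h\|_{L^2}$ or $\|f_h\|_{L^2}\|g_h\|_{H^1}$, hence into both branches of the $\min$.

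For the leading integral $\int_\Ghsm \nabla_\Ghsm(\Nsm f_h):\nabla_\Ghsm(\Tsm g_h)$ I would apply the product rule $\nabla_\Ghsm(\Nsm f_h)=(\nabla_\Ghsm\Nsm)f_h + \Nsm\nabla_\Ghsm f_h$ and the analogous expansion for $\Tsm g_h$, producing four pieces. The top-order piece $\int_\Ghsm \Nsm\nabla_\Ghsm f_h : \Tsm\nabla_\Ghsm g_h$ vanishes pointwise: contracting on the free vector index one finds $\sum_i (\Nsm)_{ia}(\Tsm)_{ib} = (\Nsm^\top\Tsm)_{ab} = (\Nsm\Tsm)_{ab} = 0$, using symmetry of $\Nsm$ and the projector identity $\Nsm\Tsm=0$. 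Each of the remaining three pieces carries at least one factor of $\nabla_\Ghsm\Nsm$ or $\nabla_\Ghsm\Tsm$, uniformly bounded in $W^{1,\infty}(\Ghsm)$ because the induction hypothesis \eqref{eq:ind-hypo} keeps $\Ghsm$ within a fixed tubular neighborhood of $\Gm$ on which the extended projectors are smooth.

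The two branches of the $\min$ then arise by a choice of IBP on the closed surface $\Ghsm$. The piece $\int(\nabla\Nsm)f_h:(\nabla\Tsm)g_h$ is controlled by $\|f_h\|_{L^2}\|g_h\|_{L^2}$ and dominated by either branch. The piece $\int(\nabla\Nsm)f_h:\Tsm\nabla g_h$ naturally gives $\|f_h\|_{L^2}\|g_h\|_{H^1}$, while $\int\Nsm\nabla f_h:(\nabla\Tsm)g_h$ naturally gives $\|f_h\|_{H^1}\|g_h\|_{L^2}$; to reach the $\|f_h\|_{L^2}\|g_h\|_{H^1}$ branch I would integrate by parts in the latter to move the derivative off $f_h$ (no boundary term since $\Ghsm$ is closed, the surface curvature remainders being swallowed by $\|\nabla_\Ghsm\Nsm\|_{L^\infty}$), and symmetrically for the other branch. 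The main obstacle I expect is not the algebraic vanishing but the iso-parametric bookkeeping: verifying that $\Nsm,\Tsm$ evaluated on $\Ghsm$ (rather than $\Gm$) retain the $W^{2,\infty}$ bounds needed for the IBP remainders and that the super-approximation gain of exactly one power of $h$ is sharp enough, via the inverse inequality, to absorb $r_N$ and $r_T$ into both halves of the $\min$.
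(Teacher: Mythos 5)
Your key algebraic idea — the product rule followed by the pointwise cancellation $\Nsm\Tsm=0$ of the top-order term — is exactly the mechanism the paper uses, and the treatment of the interpolation remainders via super-approximation and inverse inequality to reach both branches of the $\min$ is also parallel. The genuine difference is \emph{where} the integration by parts takes place, and this is where your argument has a gap.

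The paper first moves the whole integral from $\Ghsm$ to the smooth surface $\Gm$ via the geometric perturbation estimate (Lemma \ref{lemma:geo-perturb}), paying a controlled price of order $h^{-d/2+1/2}(1+\nusm)h^{k+1}$ in the $H^1$ seminorms, and only \emph{then} applies the product rule, the orthogonality $N^mT^m=0$, and IBP on $\Gm$. On the smooth closed $\Gm$ there are no boundary terms and $H^m$ is globally $C^\infty$. You instead keep all steps on $\Ghsm$ and invoke ``no boundary term since $\Ghsm$ is closed.'' That is not correct as stated: $\Ghsm$ has empty boundary, but it is only a $C^0$ piecewise-polynomial surface, so elementwise Stokes' theorem (Item 3 of Lemma \ref{lemma:ud}) leaves jump contributions $\sum_e\int_e f_h g_h\,(\mu^{K_1}_j+\mu^{K_2}_j)$ at element interfaces, and the curvature $H$ of $\Ghsm$ is only piecewise defined. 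Dismissing these ``surface curvature remainders'' with $\|\nabla_\Ghsm\Nsm\|_{L^\infty}$ conflates the curvature of the discrete surface with the $W^{1,\infty}$ bound on the extended projector; those are not the same object. The jump terms are in fact controllable — the co-normal mismatch $|\mu^{K_1}+\mu^{K_2}|$ is $O(h^k)$ since the gradient of $\hat X_{h,*}^m$ differs from the exact retraction's gradient by $O(h^k)$, and a discrete trace inequality then gives a bound of order $h^{k-1}\|f_h\|_{L^2}\|g_h\|_{L^2}$, which is absorbable for $k\ge1$ — but establishing the $O(h^k)$ co-normal jump requires the shape-regularity/induction-hypothesis bounds on $\hat X_{h,*}^m$, and none of this appears in your write-up. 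So the step is missing, not merely unsharp. The paper's preliminary lift to $\Gm$ is precisely what makes the IBP clean and is the piece your proof would need to add (or replace with the jump-term analysis just sketched).
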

\begin{proof}
	See Appendix \ref{sec:appndix_tan_stab}.
\end{proof}

Based on Lemma \ref{lemma:hatX-W1inf}--\ref{lemma:NT_stab_ref}, we are able to handle the norm conversion error, i.e., the first term on the right-hand side of \eqref{eq:eu-est}.
\begin{lemma}\label{lemma:e-convert}
	The following norm conversion is stable:
	\begin{align}
		&\| \ehxM \|_{H^1(\GhsM)}^2 - \| \exM \|_{H^1(\Ghsm)}^2
		\notag\\
		&
		\qquad\qquad
		\lesssim
		\tau \Big( 
		\epsilon^{-1}\| \ehxm \|_{H^1(\Ghsm)}^2 
		+ 
		\epsilon^{-1} \| \ehxM \|_{H^1(\GhsM)}^2 + 
		\epsilon\Big\| \frac{\exM - \ehxm}{\tau} \Big\|_{H^{1/2}(\Ghsm)}^2\Big) .
		\notag
	\end{align}
\end{lemma}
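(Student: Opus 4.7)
My plan is to split the quantity as
\[
\|\ehxM\|_{H^1(\GhsM)}^2 - \|\exM\|_{H^1(\Ghsm)}^2
=
\underbrace{\|\ehxM\|_{H^1(\GhsM)}^2 - \|\ehxM\|_{H^1(\Ghsm)}^2}_{A}
+
\underbrace{\|\ehxM\|_{H^1(\Ghsm)}^2 - \|\exM\|_{H^1(\Ghsm)}^2}_{B},
\]
and to handle the pure surface-discrepancy contribution $A$ and the projection-versus-trajectory contribution $B$ on the common surface $\Ghsm$ separately.

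For $A$, I parametrize the linear family $\hat\Gamma_{h,*}^{m,\theta}:=(1-\theta)\Ghsm+\theta\GhsM$ with velocity $w:=\hat X_{h,*}^{m+1}-\hat X_{h,*}^{m}$. The fundamental theorem of calculus combined with the moving-surface formula (Lemma \ref{lemma:ud}) gives $|A|\lesssim \|w\|_{W^{1,\infty}(\Ghsm)}\,\|\ehxM\|_{H^1(\Ghsm)}^2$. Lemma \ref{lemma:hatX-W1inf} then supplies the decomposition $\|w\|_{W^{1,\infty}}\lesssim \tau+\|\exM-\ehxm\|_{W^{1,\infty}(\Ghsm)}$, and Lemma \ref{lemma:e-diff-W1inf} converts the awkward product $\|\exM-\ehxm\|_{W^{1,\infty}}\,\|\ehxM\|_{H^1}$ into $\tau\,\|(\exM-\ehxm)/\tau\|_{H^{1/2}}$. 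A single application of Young's inequality then delivers the required $\tau\,(\epsilon^{-1}\|\ehxM\|_{H^1}^2+\epsilon\|(\exM-\ehxm)/\tau\|_{H^{1/2}}^2)$ bound for $A$.

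For $B$, I invoke the nodal identity \eqref{eq:geo_rel_1}--\eqref{eq:geo_rel_2} read at time level $m+1$: $\ehxM=I_h[(\exM\cdot\nsM)\nsM]+f_h^{m+1}$ with $f_h^{m+1}$ quadratically small in $\TsM\exM$. Setting $z:=\exM-\ehxM=I_h\TsM\exM - f_h^{m+1}$, expansion yields $B=-2(\ehxM,z)_{H^1(\Ghsm)}-\|z\|_{H^1(\Ghsm)}^2\le -2(\ehxM,z)_{H^1(\Ghsm)}$. Writing further $\ehxM=I_h\NsM\exM+f_h^{m+1}$ reduces the main piece to the normal--tangential cross term $-2(I_h\NsM\exM,I_h\TsM\exM)_{H^1(\Ghsm)}$ plus $f_h^{m+1}$-remainders that are quadratic and hence absorbable by the induction hypothesis \eqref{eq:ind-hypo}. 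The gradient part of this cross term is controlled by Lemma \ref{lemma:NT_stab_ref}, whose nodewise $\NsM$--$\TsM$ orthogonality gains one derivative and produces a bound of the form $\|\exM\|_{L^2}\|\exM\|_{H^1}$; the mass part is handled by Cauchy--Schwarz in combination with the same orthogonality.

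To generate the $\tau$ factor, I combine \eqref{eq:2phase-h3} with \eqref{eq:X-id1}--\eqref{eq:X-id2} to obtain the crucial interface-node identity $\exM - \ehxm = \tau(\eum|_\Ghsm - I_h g^{m,-\ell})$, which together with \eqref{eq:ehx-bbd1} delivers $\|\exM\|_{L^2}\lesssim \|\ehxm\|_{L^2}+\tau\|\eum\|_{L^2(\Ghsm)}$ and, by an inverse-trace argument under \eqref{eq:ind-hypo}, $\|\exM\|_{H^1}\lesssim \|\ehxm\|_{H^1}+\|\ehxM\|_{H^1}+\tau\|(\exM-\ehxm)/\tau\|_{H^{1/2}}$. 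Feeding these two inequalities into the Lemma \ref{lemma:NT_stab_ref} bound and distributing via Young's inequality produces the three target contributions $\tau\epsilon^{-1}\|\ehxm\|_{H^1}^2+\tau\epsilon^{-1}\|\ehxM\|_{H^1}^2+\tau\epsilon\|(\exM-\ehxm)/\tau\|_{H^{1/2}}^2$. The main obstacle is precisely in $B$: extracting the correct $\tau$ scaling in front of $\|\ehxm\|_{H^1}^2$ and $\|\ehxM\|_{H^1}^2$ from what is formally an $\mathcal{O}(1)$ inner product requires simultaneously exploiting the derivative-gaining orthogonality of Lemma \ref{lemma:NT_stab_ref}, the quadratic smallness of $f_h^{m+1}$ under \eqref{eq:ind-hypo}, and the time-marching identity that injects the factor $\tau$ into $\exM-\ehxm$---exactly the tight interplay between stability and consistency that this section is built around.
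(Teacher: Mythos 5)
Your decomposition $A+B$ and your treatment of $A$ coincide exactly with the paper's $M_1^m$ (FTC along the linear interpolation, then Lemmas \ref{lemma:hatX-W1inf} and \ref{lemma:e-diff-W1inf}, then Young). The gap is in $B$, and it is structural, not cosmetic: you perform the Pythagorean split of $\exM$ with the \emph{level-$(m+1)$} projectors $\NsM, \TsM$, whereas the paper uses the \emph{level-$m$} projectors $\Nsm, \Tsm$ precisely because the exact nodal cancellation $I_h\Tsm\ehxm=0$ then gives the identity $I_h\Tsm\exM=I_h\Tsm(\exM-\ehxm)$, and $\exM-\ehxm$ is genuinely $O(\tau)$ by the time-marching relation. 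This is where the factor $\tau$ enters the cross term via Lemma \ref{lemma:NT_stab_ref}. At level $m+1$ you do have $I_h\TsM\ehxM=0$, but the analogous substitution yields $I_h\TsM(\exM-\ehxM)$, and the tangential trajectory drift $\exM-\ehxM$ is \emph{not} $O(\tau)$ — it accumulates over time.

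Your attempt to recover $\tau$ by feeding $\|\exM\|_{L^2}\lesssim\|\ehxm\|_{L^2}+\tau\|\eum\|_{L^2(\Ghsm)}$ into the Lemma \ref{lemma:NT_stab_ref} bound $\|\exM\|_{H^1}\,\|\exM\|_{L^2}$ does not close: it leaves the product $\|\ehxm\|_{L^2}\,\|\exM\|_{H^1}$, which is $O(1)$ in $\tau$ and cannot be absorbed into the stated right-hand side. This is exactly why the paper pays the extra bookkeeping price of the bridging term $M_2^m$ (comparing $\Nsm$ with $\NsM\circ\hat X_{h,*}^{m+1}$) rather than working directly at level $m+1$: the split must be taken at level $m$ so that the exact orthogonality $I_h\Tsm\ehxm=0$, and not merely an $O(\tau)$ bound on $\exM$, eliminates the tangential component before Lemma \ref{lemma:NT_stab_ref} is invoked. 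Without that exact cancellation you cannot extract a $\tau$ in front of $\|\ehxm\|_{H^1}^2$ from what you correctly identify as a formally $O(1)$ inner product.
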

%\begin{lemma}\label{lemma:e-convert}
%	The following norm conversion is stable:
%	\begin{align}
	%		&\| \nabla_\GhsM \ehxM \|_{L^2(\GhsM)}^2 - \| \nabla_\Ghsm \exM \|_{L^2(\Ghsm)}^2
	%		\notag\\
	%		&\lesssim
	%		\tau \Big( 
	%		\epsilon^{-1}\| \nabla_\Ghsm \ehxm \|_{L^2(\Ghsm)}^2 
	%		+ 
	%		\epsilon^{-1} \| \nabla_\GhsM \ehxM \|_{L^2(\GhsM)}^2 + 
	%		\epsilon\Big\| \frac{\exM - \ehxm}{\tau} \Big\|_{H^{1/2}(\Ghsm)}^2\Big) .
	%	\end{align}
%	Similarly, we have the $L^2$-version:
%	\begin{align}
	%		&\| \ehxM \|_{L^2(\GhsM)}^2 - \|  \exM \|_{L^2(\Ghsm)}^2
	%		\notag\\
	%		&\lesssim
	%		\tau \Big( 
	%		\epsilon^{-1}\| \ehxm \|_{L^2(\Ghsm)}^2 
	%		+ 
	%		\epsilon^{-1} \|  \ehxM \|_{L^2(\GhsM)}^2 + 
	%		\epsilon\Big\| \frac{\exM - \ehxm}{\tau} \Big\|_{H^{1/2}(\Ghsm)}^2\Big) .
	%	\end{align}
%\end{lemma}
\begin{proof}
	See Appendix \ref{sec:e-convert}.
\end{proof}

\subsection{Main error estimates}
Substituting the norm conversion stability estimate (Lemma \ref{lemma:e-convert}), the $L^2$ interface estimate \eqref{eq:ex-est-L2-s}, the $H^{1/2}$ error estimates for $\frac{\exM-\ehxm}{\tau}$ (i.e., 	\eqref{eq:ev-est1-s}) and the estimates for $d^m$s (Lemma \ref{lemma:dm}) into the right-hand side of the main error equation \eqref{eq:eu-est}, and using the norm equivalence, we finally obtain
%Plugging norm conversion stability estimate (Lemma \ref{lemma:e-convert}), the $L^2$ interface estimate \eqref{eq:ex-est-L2-s} and the error estimates for $\frac{\exM-\ehxm}{\tau},\ekm$ (i.e., \eqref{eq:ev-est} and \eqref{eq:ek-est} respectively) into the right-hand side of the main error equation \eqref{eq:eu-est}, we obtain
%\begin{align}
%	&\| \D_\Ohsm \eum \|_{L^2(\Ohsm)}^2
%	+
%	\frac{\| \ehxM \|_{H^1(\GhsM)}^2 - \| \ehxm \|_{H^1(\Ghsm)}^2}{\tau}
%	\notag\\
%	&\lesssim
%	\epsilon\| \eum \|_{H^{1}(\Ohsm)}^2
%	+
%	\epsilon^{-1}
%	\| \ehxm \|_{H^{1}(\Ghsm)}^2
%	+
%	\epsilon^{-1}
%	\| \ehxM \|_{H^{1}(\Ghsm)}^2
%%	+
%%	\epsilon^{-1}
%%	 \| \nabla_\Ohsm E_h \ehxm \|_{L^2(\Ohsm)}^2
%	\notag\\
%	&\quad
%	+
%	\epsilon^{-1}\| J_u^m(\cdot) \|_{H^{-1}(\Ohsm)}^2
%%	+
%%	\epsilon^{-1}{\r h^{-1}}\| J_p^m(\cdot) \|_{L^{2}(\Ohsm)}^2
%	+
%	\| J_x^m(\cdot) \|_{H^{1}(\Ghsm)}^2
%	+
%	\epsilon^{-1}
%	\| J_\kappa^m(\cdot) \|_{H^{\r-1/2}(\Ghsm)}^2
%	\notag\\
%	&\quad
%	+
%	\epsilon^{-1}\| d_u^m(\cdot) \|_{H^{-1}(\Ohsm)}^2
%	+
%	\epsilon^{-1}\| d_p^m(\cdot) \|_{H^{\r1/2}(\Ohsm)}^2
%	+
%	\| d_x^m(\cdot) \|_{H^{1}(\Ghsm)}^2
%	+
%	\epsilon^{-1}
%	\| d_\kappa^m(\cdot) \|_{H^{\r-1/2}(\Ghsm)}^2
%	\notag
%	 .
%\end{align}
%Then, using the estimates for $d^m$s, $J^m$s (Lemma \ref{lemma:dm} and \ref{lemma:Jm}) and \eqref{eq:ev-est-s}--\eqref{eq:ep-est},we finally obtain
\begin{align}
	&\| \D_\Ohsm \eum \|_{L^2(\Ohsm)}^2
	+
	\frac{\| \ehxM \|_{H^1(\GhsM)}^2 - \| \ehxm \|_{H^1(\Ghsm)}^2}{\tau}
	\notag\\
	&\lesssim
	\epsilon\| \eum \|_{H^{1}(\Ohsm)}^2
	+
	\epsilon^{-1} \| \ehxm \|_{H^{1}(\Ghsm)}^2
	+
	\epsilon^{-1} \| \ehxM \|_{H^{1}(\Ghsm)}^2
	\notag\\
	&\quad
	+
	\epsilon^{-1} \big((1+\musm)h^{k-1/2} +(1+\nusm)h^{k+\alpha(d)} + \tau \big)^2 \notag
	,
\end{align}
where $\alpha(2)=0,\alpha(3)=-1/2$.
%, i.e. for $d=2$ we have optimal convergence rate and for $d=3$ $\alpha$ degrades down by $1$ due to the lack of super-approximation of $I_h$ on the interface. In this case, the overall rate of convergence degrades down by $1/2$.
\begin{remark}\upshape\label{rmk:1/2}
	The exponent $-1/2$ of $h$ comes from the factor $h^{-1/2}$ in the front of the $d_p^m$ term, and
	the exponent $-1/2$ in $\alpha(3)$ comes from the $H^{-1/2}$ norm on $d_\kappa^m$.
\end{remark}
Since $\epsilon$ is a universal constant, we can use Gr\"onwall's inequality, Korn's inequality and the Poincar\'e inequality to conclude
\begin{align}\label{eq:e-est}
	&\sum_{j=0}^{m} \tau \| e_u^j \|_{H^1(\hat\Omega_{h,*}^j)}^2
	+
	\max_{j=0,\cdots,m+1}
	\| \hat e_x^j \|_{H^1(\hat\Gamma_{h,*}^j)}^2
	\notag\\
	&
	\qquad\qquad
	\leq
	C_{\bar\mu_m, \bar\nu_m}
	\sum_{j=0}^{m} 
	%	C_{\mu_j, \nu_j} 
	\tau
	\big((1+\mu_{*,j})
	h^{k-1/2} + (1+\nu_{*,j})h^{k+\alpha(d)} + \tau \big)^2
	,
\end{align}
for all $0\leq m \leq [T/\tau]$.

\section{Shape regularity analysis}\label{sec:shape-reg}

\subsection{Trajectory estimates}
\label{sec:traj-est}

In this subsection, we discuss the trajectory error of the interface approximation.
We define the trajectory error of the interface $e_{x,\#}^m:= X_h^m - X_{h,\#}^m\in V_h^k(\Ohso)^d$ where $X_{h,\#}^m = I_h (X^m\circ \Phi^0)\in S_h(\Ohso)^d$ is the Lagrange interpolation of the exact flow $X^m: \Omega^0\rightarrow\Om$.
Then, at the finite element nodes, the following relation holds
\begin{align}
	e_{x,\#}^{m+1} - e_{x,\#}^m
	&=
	(X_h^{m+1} - X_h^m) - (X_{h,\#}^{m+1} - X_{h,\#}^m)
	\notag\\
	&=
	\tau (u_h^m - I_h(u^m\circ X^m\circ \Phi^0)) - \tau \Big(\frac{X_{h,\#}^{m+1} - X_{h,\#}^m}{\tau} - I_h(u^m\circ X^m\circ \Phi^0)\Big)
	\notag\\
	&=
	\tau (u_h^m - I_h(u^m\circ \Phi^m)) 
	\notag\\
	%	&\quad
	%	+
	%	\tau (R_h - I_h)(u^m\circ\Phi^m) 
	%	\notag\\
	&\quad
	+
	\tau (I_h(u^m\circ \Phi^m) - I_h(u^m\circ X^m\circ \Phi^0))  
	\notag\\
	&\quad
	- \tau \Big(\frac{X_{h,\#}^{m+1} - X_{h,\#}^m}{\tau} - I_h(u^m\circ X^m\circ \Phi^0)\Big)
	%	\notag\\
	%	&
	=:
	\tau \sum_{i=1}^3 E_i^m
	\notag
	.
\end{align}
By definition, $E_1^m = e_u^m$. Similar to the Taylor remainder estimate \eqref{W1infty-g}, we know 
%$\| E_4 \|_{W^{1,\infty}(\Ohsm)}\leq C_\mum\tau$.
%
at each finite element node $p\in\mathcal N(\Ohso)$ that
\begin{align}
	E_3^m(p)
	=
	\frac{X^{m+1}-X^m}{\tau}\circ \Phi^0(p) - u^m \circ X^m \circ \Phi^0 (p)
	=:
	\tilde g^m \circ \Phi^0 (p)
	\notag
\end{align}
with $\tilde g^m:\Omega\rightarrow \R^d$ satisfying $\| \tilde g^m \|_{W^{1,\infty}(\Omega)}\leq C\tau$.
Therefore,
\begin{align}
	\| E_3^m \|_{W^{1,\infty}(\Ghso)}
	\leq
	C_{\num} \tau 
	\quad\mbox{and}\quad
	\| E_3^m \|_{W^{1,\infty}(\Ohso)}
	\leq
	C_{\mum} \tau 
	.
	\notag
\end{align}

%We notice $E_2^m$ vanishes on all interface nodes. Therefore,
%\begin{align}
%	E_2^m|_\Ghsm = 0 . \notag
%\end{align}
For each interface finite element node $p\in \mathcal N(\Ghso)$, we have $E_2^m(p) = u^m\circ\hat X_{h,*}^m(p) - u^m\circ X_{h,\#}^m(p)$. Therefore, using Lemma \ref{lemma:super_conv-nonlinear},
\begin{align}
	\| E_2^m \|_{H^1(\Ghso)} 
	\leq
	C
	\| e_{x,\#}^m \|_{H^1(\Ghso)}
	+
	C
	\| \ehxm \|_{H^1(\Ghso)}
	. \notag
\end{align}
In the bulk domain, we derive
\begin{align}
	&\| E_2^m \|_{H^1(\Ohso)}
	\leq
	%	C_{\mum}
	\| (1 -I_h)(u^m\circ \Phi^m\circ \hat X_{h,*}^m) \|_{H^1(\Ohso)}
	+
	%	 C_{\mum}
	\| (1 -I_h)( u^m\circ X^m \circ \Phi^0) \|_{H^1(\Ohso)}
	\notag\\
	&\quad
	+
	%	C_{\mum}
	C
	\| \Phi^m \circ \hat X_{h,*}^m - X^m \circ \Phi^0 \|_{H^1(\Ohso)}
	\qquad\mbox{(by Lipschitz continuity)}
	\notag\\
	&\leq
	%	C_{\mum}
	\| (1 -I_h)(u^m\circ \Phi^m\circ \hat X_{h,*}^m) \|_{H^1(\Ohso)}
	+
	%	C_{\mum}
	\| (1 -I_h)( u^m\circ X^m \circ \Phi^0) \|_{H^1(\Ohso)}
	\notag\\
	&\quad
	+
	%	C_{\mum}
	C
	\| \hat X_{h,*}^m - I_h(X^m\circ\Phi^0) \|_{H^1(\Ohso)}
	+
	%	C_{\mum}
	C
	\| \Phi^m\circ \hat X_{h,*}^m - \hat X_{h,*}^m \|_{H^1(\Ohso)}
	\notag\\
	&\quad
	+
	%	C_{\mum}
	C
	\| (1 - I_h) (X^m\circ \Phi^0) \|_{H^1(\Ohso)}
	\qquad\mbox{(by the triangle inequality)}
	\notag\\
	&\leq
	C_{\mum}
	(1 + \musm) h^k
	+
	C_{\mum,\num}
	(1 + \nusm) h^{k+1/2}
	+
	%	 C_{\mum}
	C
	\| e_{x,\#}^m  \|_{H^{1}(\Ohso)}
	+
	C
	\| E_h \ehxm  \|_{H^{1}(\Ohso)}
	\notag\\
	&
	\hspace{200pt}\mbox{(using Lemma \ref{lemma:Ih} and Lemma \ref{lemma:Phi-approx})}
	.
	\notag
\end{align}
Collecting the estimates for $E^m s$ above, we conclude the trajectory estimate on the interface
\begin{align}
	\| e_{x,\#}^{m+1} \|_{H^1(\Ghso)}
	&\leq
	\sum_{j=0}^{m}
	\| e_{x,\#}^{j+1} - e_{x,\#}^{j} \|_{H^1(\Ghso)}
	\notag\\
	&\leq
	\sum_{j=0}^{m} \tau (C_{\nu_j} \tau + C \| e_u^j \|_{H^1(\Ghso)} 
	+ C \| e_{x,\#}^{j} \|_{H^1(\Ghso)}
	+ C \| \hat e_{x}^{j} \|_{H^1(\Ghso)}
	)
	\notag
	%	\notag\\
	%\quad\forall 0\leq m\leq [T/\tau] 
	,
\end{align}
and in the bulk domain
\begin{align}
	&\| e_{x,\#}^{m+1} \|_{H^1(\Ohso)}
	\leq
	\sum_{j=0}^{m} \| e_{x,\#}^{j+1} - e_{x,\#}^{j} \|_{H^1(\Ohso)}
	\notag\\
	&\leq
	\sum_{j=0}^{m} \tau (C_{\mu_j} \tau + C \| e_u^j \|_{H^1(\Ohso)} 
	+ C \| e_{x,\#}^{j} \|_{H^1(\Ohso)}
	+ C \| E_h \hat e_{x}^{j} \|_{H^1(\Ohso)}
	)
	\notag\\
	&\quad+
	\sum_{j=0}^{m}
	\tau \big(C_{\mu_j}(1+\mu_{*,j})h^{k} + C_{\mu_j,\nu_j}(1+\nu_{*,j})h^{k+1/2}\big) .
	\notag
\end{align}
The $e_{x,\#}^{j}$ terms on the right-hand side can be eliminated by applying Gr\"onwall's inequality:
\begin{align}\label{eq:e-sharp-i}
	\| e_{x,\#}^{m+1} \|_{H^1(\Ghso)}
	&\leq
	\sum_{j=0}^{m} \tau (C_{\nu_j} \tau 
	+ C \| e_u^j \|_{H^1(\Ghso)} 
	+ C \| \hat e_x^j \|_{H^1(\Ghso)} 
	)
\end{align}
and
\begin{align}\label{eq:e-sharp-b}
	\| e_{x,\#}^{m+1} \|_{H^1(\Ohso)}
	&\leq
	\sum_{j=0}^{m} \tau (C_{\mu_j} \tau 
	+ C \| e_u^j \|_{H^1(\Ohso)} 
	+ C \| E_h \hat e_x^j \|_{H^1(\Ohso)} 
	)
	\notag\\
	&\quad+
	\sum_{j=0}^{m}
	\tau \big(C_{\mu_j}(1+\mu_{*,j})h^{k} + C_{\mu_j,\nu_j}(1+\nu_{*,j})h^{k+1/2}\big)
	.
\end{align}
%for all $0\leq  m\leq [T/\tau]$.

\subsection{Shape regularity of $\GhsM$}\label{sec:shape-reg-G}

%First, we define
%\begin{align}
%	&\bar\mu_m = \max_{j=1,...,m} \mu_j,\quad
%	\bar\nu_m = \max_{j=1,...,m} \nu_j ,
%	\notag\\
%	&\bar\mu_{*,m} = \max_{j=1,...,m} \mu_{*,j},\quad
%	\bar\nu_{*,m} = \max_{j=1,...,m} \nu_{*,j} .
%	\notag
%\end{align}
First, we observe the nodal-wise decomposition
\begin{align}
	\hat X_{h,*}^{m+1} 
	= X_{h,\#}^{m+1} + (X_h^{m+1} -X_{h,\#}^{m+1}) + (\hat X_{h,*}^{m+1} - X_h^{m+1})  
	= 
	X_{h,\#}^{m+1} + e_{x,\#}^{m+1} - E_h \hat e_x^{m+1} .
	\notag
\end{align}
Therefore, for any non-negative integer $l$, using the inverse inequality and the error estimates \eqref{eq:e-sharp-i} and \eqref{eq:e-est} successively, we get
\begin{align}
	&\| \hat X_{h,*}^{m+1}\|_{H_h^l(\Ghso)}
	%	\qquad\mbox{(norm equivalence)}
	%	\notag\\
	\leq
	\| I_h (X^{m+1}\circ\Phi^{0}) \|_{H_h^l(\Ghso)}
	+
	\| e_{x,\#}^{m+1} \|_{H_h^l(\Ghso)}
	+
	\| \hat e_x^{m+1} \|_{H_h^l(\Ghso)}
	\notag\\
	&\leq
	C
	+
	C
	h^{-l+1}
	\| e_{x,\#}^{m+1} \|_{H^1(\Ghso)}
	+
	C
	h^{-l+1}
	\| \hat e_x^{m+1} \|_{H^1(\Ghso)}
%	\qquad\mbox{(inverse inequality)}
	\notag\\
	&\leq
	C
	+
	C
	h^{-l+1}
	\| \hat e_x^{m+1} \|_{H^1(\Ghso)}
	+
	h^{-l+1} \sum_{j=0}^{m} C_{\nu_j} \tau (\tau + \| e_u^j \|_{H^1(\Ghso)}
	+ \| \hat e_x^j \|_{H^1(\Ghso)}
	)
%	\quad\mbox{(\eqref{eq:e-sharp-i} is used)}
	\notag\\
	&\leq
	C 
	%	+
	%	C h^{-l+1}\tau
	+
	h^{-l+{1/2}}
	C_{\bar\mu_{m},\bar\nu_{m}}
	\Big(\sum_{j=0}^{m}
	\tau \big(\tau + (1 + \mu_{*,j}) h^{k-1/2} + (1 + \nu_{*,j}) h^{k+\alpha(d)} \big)^2 \Big)^{1/2}
%	\quad\mbox{(\eqref{eq:e-est} is used)}
	\notag .
\end{align}
Taking squares and applying Gr\"onwall's inequality, we obtain
\begin{align}\label{eq:Gron-Ghsm}
	\| \hat X_{h,*}^{m+1}\|_{H_h^l(\Ghso)}^2
	\leq
	C +
	C_{\bar\mu_{m}, \bar\nu_{m}}
	h^{-2l+1}(\tau + h^{k-1/2})^2
	+
	C_{\bar\mu_{m}, \bar\nu_{m}}
	h^{-2l}
	\sum_{j=0}^{m}
	\tau h^{2k} \mu_{*,j}^2 ,
\end{align}
for $l=0,\cdots,k$ and $h\leq h_{\bar\mu_{*,m}, \bar\nu_{*,m},\bar\mu_{m}, \bar\nu_{m}}$.

A similar argument and the triangle inequality lead to the following $W^{1,\infty}$-smallness
\begin{align}\label{eq:Gron-Ghsm1}
	&\| \hat X_{h,*}^{m+1} - I_h (X^{m+1}\circ\Phi^{0}) \|_{W^{1,\infty}(\Ghso)}
	\notag\\
	&
	\qquad\qquad
	\leq
	%		h^{-d/2+1/2}
	%		\sum_{j=0}^m
	%		C_{\nu_j} \tau^2
	%	+
	h^{-d/2}
	C_{\bar\mu_{m},\bar\nu_{m}}
	\Big(\sum_{j=0}^{m}
	\tau \big(\tau + (1 + \mu_{*,j}) h^{k-1/2} + (1 + \nu_{*,j}) h^{k+\alpha(d)}\big)^2\Big)^{1/2}
	.
\end{align}
%and
%\begin{align}\label{eq:Gron-Ghsm2}
%	&\| (\hat X_{h,*}^{m+1})^{-1}\|_{W^{1,\infty}(\GhsM)}
%	\leq
%	C
%	\| I_h (X^{m+1}\circ\Phi^{0})+ (\hat X_{h,*}^{m+1} - I_h (X^{m+1}\circ\Phi^{0}))\|_{W^{1,\infty}(\Ghso)}^{-1}
%	\notag\\
%	&\leq
%	C
%	+
%	h^{-d/2}
%	\sum_{j=0}^{m}
%	C_{\mu_{j},\nu_{j}}
%	\tau (\tau + (1 + \mu_{*,j}) h^{k-1/2} + (1 + \nu_{*,j}) h^{k+\alpha(d)})
%	.
%\end{align}
The parametrization map $\hat X_{h,*}^{m+1}: \Ghso\rightarrow\GhsM$ admits the decomposition
$\hat X_{h,*}^{m+1} = \hat Y_{h,*}^{m+1} \circ I_h(X^{m+1}\circ\Phi^0)$, where $I_h(X^{m+1}\circ\Phi^0):\Ghso\rightarrow\Gamma_{h,\#}^{m+1}$ and $\hat Y_{h,*}^{m+1} := {\rm id}_{\Gamma_{h,\#}^{m+1}}
+
\hat X_{h,*}^{m+1}
-
I_h(X^{m+1}\circ\Phi^0):
\Gamma_{h,\#}^{m+1}
\rightarrow
\hat\Gamma_{h,*}^{m+1}
$. Here, $\hat X_{h,*}^{m+1}$ and $I_h(X^{m+1}\circ\Phi^0)$ are interpreted as finite element functions on $\Gamma_{h,\#}^{m+1}$. In view of the smallness condition \eqref{eq:Gron-Ghsm1} and the fact that $I_h(X^{m+1}\circ\Phi^0)$ is the interpolation of a globally continuous and piecewise $C^{k+1}$ function, we know that the $L^q$ and $W^{1,q}$, $q\in[1,\infty]$, norms on $\Ghso$, $\Gamma_{h,\#}^{m+1}$ and $\hat\Gamma_{h,*}^{m+1}$ are equivalent up to a constant that is independent of $\bar\mu_{*,m}, \bar\nu_{*,m},\bar\mu_{m}, \bar\nu_{m}$ for sufficiently small $h\leq h_{\bar\mu_{*,m}, \bar\nu_{*,m},\bar\mu_{m}, \bar\nu_{m}}$; cf. \cite[Lemma 4.3]{KLL17} and \cite[Lemma 7.2]{KLL19}.

Consequently,
\begin{align}\label{eq:Gron-Ghsm2}
	\| (\hat X_{h,*}^{m+1})^{-1}\|_{W^{1,\infty}(\GhsM)}
	\leq
	C
	\| {\rm id}_{\hat\Gamma_{h,*}^{0}} \|_{W^{1,\infty}(\hat\Gamma_{h,*}^{0})}
	\leq
	C
	,
\end{align}
where we have used a change of variables and the norm equivalence between $\Ghso$ and $\hat\Gamma_{h,*}^{m+1}$.

\subsection{Shape regularity of $\hat\Omega_{h,*}^{m+1}$}
In the bulk region, for any non-negative integer $l$, a similar argument as in Section \ref{sec:shape-reg-G} yields
\begin{align}
	&\| \hat X_{h,*}^{m+1}\|_{H_h^l(\Ohso)}
	\leq
	\| I_h (X^{m+1}\circ\Phi^{0}) \|_{H_h^l(\Ohso)}
	+
	\| e_{x,\#}^{m+1} \|_{H_h^l(\Ohso)}
	+
	\| E_h \hat e_x^{m+1} \|_{H_h^l(\Ohso)}
	\notag\\
	&\leq
	C
	+
	C
	h^{-l+1}
	\| e_{x,\#}^{m+1} \|_{H^1(\Ohso)}
	+
	C
	h^{-l+3/2}
	\| E_h \hat e_x^{m+1} \|_{H_h^{3/2}(\Ohso)}
%	\qquad\mbox{(inverse inequality)}
	\notag\\
	&\leq
	C 
	%	+
	%	C h^{-l+1}\tau
	+
	C
	h^{-l+3/2}
	\| \hat e_x^{m+1} \|_{H^1(\Ghso)}
	%	\notag\\
	%	&\quad
	+
	h^{-l+1} \sum_{j=0}^{m} C_{\mu_j} \tau (\tau + \| e_u^j \|_{H^1(\Ohso)}
	+ \| E_h \hat e_x^j \|_{H^1(\Ohso)}
	)
	\notag\\
	&\quad+
	h^{-l+1}
	\sum_{j=0}^{m}
	\tau \big(C_{\mu_j}(1+\mu_{*,j})h^{k} + C_{\mu_j,\nu_j}(1+\nu_{*,j})h^{k+1/2}\big)
%	\qquad\mbox{(\eqref{eq:e-sharp-b} is used)}
	\notag\\
	&\leq
	C 
	+
	h^{-l+1}
	C_{\bar\mu_{m},\bar\nu_{m}}
	\Big(
	\sum_{j=0}^{m}
	\tau \big(\tau + (1 + \mu_{*,j}) h^{k-1/2} + (1 + \nu_{*,j}) h^{k+\alpha(d)}\big)^2\Big)^{1/2}
	\notag\\
	&\quad+
	h^{-l+1}
	\sum_{j=0}^{m}
	\tau \big(C_{\mu_j}(1+\mu_{*,j})h^{k} + C_{\mu_j,\nu_j}(1+\nu_{*,j})h^{k+1/2}\big)
%	\qquad\mbox{(\eqref{eq:e-est} is used)}
	.\notag
\end{align}
\begin{remark}\upshape
	The fractional-order inverse inequality holds due to the regular scaling behavior of the Slobodeckij seminorm; also see \cite[Lemma 4.5.3]{Brenner08}.
\end{remark}
%Taking squares and applying Gr\"onwall's inequality,
After taking squares and applying Gr\"onwall’s inequality, we obtain
\begin{align}\label{eq:Gron-Ohsm}
	\| \hat X_{h,*}^{m+1}\|_{H_h^l(\Ohso)}^2
	&\leq
	C +
	C_{\bar\mu_{m}, \bar\nu_{m}}
	h^{-2l+2} (\tau + h^{k-1/2})^2
	+
	C_{\bar\mu_{m}, \bar\nu_{m}}
	h^{-2l+2}
	\sum_{j=0}^{m}
	\tau h^{2k+2\alpha(d)} \nu_{*,j}^2
	,
\end{align}
for $l=0,\cdots,k$ and $h\leq h_{\bar\mu_{*,m}, \bar\nu_{*,m},\bar\mu_{m}, \bar\nu_{m}}$.
Analogous to \eqref{eq:Gron-Ghsm1}--\eqref{eq:Gron-Ghsm2}, taking $l=1$ and using the inverse inequality, we derive
\begin{align}\label{eq:Gron-Ghsm11}
	&\| \hat X_{h,*}^{m+1} - I_h (X^{m+1}\circ\Phi^{0}) \|_{W^{1,\infty}(\Ohso)}
	\notag\\
	&\leq
	% 		h^{-d/2}
	% 		\sum_{j=0}^m
	% 		C_{\mu_j} \tau^2
	% 		+
	h^{-d/2}
	C_{\bar\mu_{m},\bar\nu_{m}}
	\Big(
	\sum_{j=0}^{m}
	%	C_{\mu_{j},\nu_{j}}
	\tau \big(\tau + (1 + \mu_{*,j}) h^{k-1/2} + (1 + \nu_{*,j}) h^{k+\alpha(d)}\big)^2\Big)^{1/2}
	\notag\\
	&\quad+
	h^{-d/2}
	\sum_{j=0}^{m}
	\tau \big(C_{\mu_j}(1+\mu_{*,j})h^{k} + C_{\mu_j,\nu_j}(1+\nu_{*,j})h^{k+1/2}\big)
	% 	h^{-d/2}
	% 	C_{\bar\mu_{m},\bar\nu_{m}}
	% 	\Big(\sum_{j=0}^{m}
	%% 	C_{\mu_{j},\nu_{j}}
	% 	\tau \big(\tau + (1 + \mu_{*,j}) h^{k-1/2} + (1 + \nu_{*,j}) h^{k+\alpha(d)}\big)^2\Big)^{1/2}
	,
\end{align}
and similar to the derivation of \eqref{eq:Gron-Ghsm2}, we also have
\begin{align}\label{eq:Gron-Ohsm2}
	\| (\hat X_{h,*}^{m+1})^{-1}\|_{W^{1,\infty}(\OhsM)}
	\leq
	C
	.
\end{align}
%\begin{align}\label{eq:Gron-Ohsm2}
% 	&\| (X_{h,*}^{m+1})^{-1}\|_{W^{1,\infty}(\OhsM)}
% 	\leq
% 	C
% 	\| I_h (X^{m+1}\circ\Phi^{0}) + (X_{h,*}^{m+1} - I_h (X^{m+1}\circ\Phi^{0}))\|_{W^{1,\infty}(\Ohso)}^{-1}
% 	\notag\\
% 	&\leq
% 	C
% 	+
% 	C
% 	\|  X_{h,*}^{m+1} - I_h (X^{m+1}\circ\Phi^{0}) \|_{W^{1,\infty}(\Ohso)}
% 	\notag\\
% 	&\leq
% 	C
% 	+
% 	h^{-d/2}
% 	\sum_{j=0}^{m}
% 	C_{\mu_{j},\nu_{j}}
% 	\tau (\tau + (1 + \mu_{*,j}) h^{k-1/2} + (1 + \nu_{*,j}) h^{k+\alpha(d)})
% 	.
% \end{align}

%\subsection{Critical case: $d=3$ and $k=2$}
%
%Similar to \eqref{eq:Gron-Ohsm}, we have
%\begin{align}\label{eq:Gron-Ohsm1}
%	\| \hat X_{h,*}^{m+1}\|_{W^{1,\infty}(\Ohsf)}
%	&\leq
%	C +
%	C
%	h^{-k+1}\tau
%	%	\notag\\
%	%	&\quad
%	+
%	C_{\bar\mu_{m}, \bar\nu_{m}}
%	h^{-k+1}
%	\sum_{j=0}^{m}
%	\tau (h^{k+1-d/2} + h^{k+\r \alpha(d)}) \| \hat X_{h,*}^j  \|_{H_h^{k}(\Ghsf)}
%	\leq C,
%\end{align}
%for $h\leq h_{\bar\mu_{m}, \bar\nu_{m}}$.

\section{Convergence of errors}
\label{sec:conv-err}

Taking $l=k$ in \eqref{eq:Gron-Ghsm} and \eqref{eq:Gron-Ohsm} and using the equivalence relation \eqref{eq:mu-nu-equiv},
\begin{align}
\nu_{*,m+1}^2
&\leq
C +
C_{\bar\mu_{m}, \bar\nu_{m}}
h^{-2k+1}\tau^2
+
C_{\bar\mu_{m}, \bar\nu_{m}}
\sum_{j=0}^{m}
\tau \mu_{*,j}^2 ,
\notag\\
\mu_{*,m+1}^2
&\leq
C +
C_{\bar\mu_{m}, \bar\nu_{m}}
h^{-2k+2}\tau^2
+
C_{\bar\mu_{m}, \bar\nu_{m}}
h^{2+2{\alpha(d)}}
\sum_{j=0}^{m}
\tau \nu_{*,j}^2
\notag
.
\end{align}
Applying Gr\"onwall's inequality and the stepsize constraint $\tau\leq Ch^k$ to the system above, we conclude
\begin{align}
\nu_{*,m+1}
\leq C_{\bar\mu_m, \bar\nu_m} 
\quad\mbox{and}\quad
\mu_{*,m+1}
\leq C
,
\notag
\end{align}
for $h\leq h_{\bar\mu_{*,m},\bar\nu_{*,m},\bar\mu_{m},\bar\nu_{m}}$.

Consequently, from \eqref{eq:Gron-Ghsm}--\eqref{eq:Gron-Ghsm2} and \eqref{eq:Gron-Ohsm}--\eqref{eq:Gron-Ohsm2} with $l=0,\cdots,k-1$, we also obtain the boundedness
\begin{align}
\nu_{m+1}
&\sim
\| \hat X_{h,*}^{m+1} \|_{W_h^{k-1,4}(\Ghso)}
+
\| \hat X_{h,*}^{m+1} \|_{W^{1,\infty}(\Ghso)}
+
\| (\hat X_{h,*}^{m+1})^{-1} \|_{W^{1,\infty}(\GhsM)}
\leq C ,
%	\label{eq:shape-reg1}
\notag
\\
\mu_{m+1}
&\sim
\| \hat X_{h,*}^{m+1} \|_{W_h^{k-1,4}(\Ohso)}
+
\| \hat X_{h,*}^{m+1} \|_{W^{1,\infty}(\Ohso)}
+
\| (\hat X_{h,*}^{m+1})^{-1} \|_{W^{1,\infty}(\OhsM)}
\leq C ,
%	\label{eq:shape-reg2}
\notag
\end{align}
for $h\leq h_{\bar\mu_{*,m},\bar\nu_{*,m},\bar\mu_{m},\bar\nu_{m}}$.
%From \eqref{eq:Gron-Ohsm}--\eqref{eq:Gron-Ohsm2}, we know
%\begin{align}\label{eq:shape-reg2}
%	\mu_{m+1}
%	=
%	\| \hat X_{h,*}^{m+1} \|_{H_h^{k-1}(\Ohso)}
%	+
%	\| \hat X_{h,*}^{m+1} \|_{W^{1,\infty}(\Ohso)}
%	+
%	\| (\hat X_{h,*}^{m+1})^{-1} \|_{W^{1,\infty}(\OhsM)}
%	\leq C
%\end{align}
%for $h\leq h_{\musm,\nusm,\bar\mu_{m},\bar\nu_{m}}$.

%Finally, plugging \eqref{eq:shape-reg1} and \eqref{eq:shape-reg2} into \eqref{eq:Gron-Ghsm} and \eqref{eq:Gron-Ohsm} again yields
%\begin{align}
%	\mu_{*,m+1} + \nu_{*,m+1} = \| \hat X_{h,*}^{m+1} \|_{H_h^{k}(\Ohso)}
%	+
%	\| \hat X_{h,*}^{m+1} \|_{H_h^k(\Ghso)}
%	 \leq C.
%\end{align}

Conversely, we can improve the boundedness and the mesh size requirement to
\begin{align}
\mu_{*,m+1} + \nu_{*,m+1}
\leq C,
\notag
\end{align}
and 
\begin{align}
h\leq h_{C,C,C,C} .
\notag
\end{align}

%Now we are ready to close the whole stability loop and conclude the main convergence results in Theorem ???.
%
%From \eqref{eq:Gron-Ghsm} and \eqref{eq:Gron-Ohsm}, we conlucde that for $h\leq h_{\bar\mu_{m}, \bar\nu_{m}}$,
%\begin{align}
%	\| \hat X_{h,*}^{m+1}\|_{H_h^k(\Ohsf)}
%	+
%	\| \hat X_{h,*}^{m+1}\|_{H_h^k(\Ghsf)}
%	\leq 
%	C,
%\end{align}
%where the constant $C$ is independent of $\tau$, $h$ and $m$.
%Equivalently, $\musm + \nusm \leq C$, and in the other way around, we can improve the constraint to $h\leq h_{C, C}$.
Finally, we substitute the boundedness results above back into the main error estimate \eqref{eq:e-est} and get
\begin{align}\label{eq:eux-est-fin}
&\Big(\sum_{j=0}^{m} \tau \| e_u^j \|_{H^1(\hat\Omega_{h,*}^j)}^2\Big)^{1/2}
+
\max_{j=0,\cdots,m+1}
\| \hat e_x^j \|_{H^1(\hat\Gamma_{h,*}^j)}
%	\notag\\
%	&
\leq C
(\tau + h^{k-1/2})
,
\end{align}
for all $m=0,\cdots,[T/\tau]$.
%where $\beta(2)=0$ and $\beta(3)=-1/2$.
This completes the induction step and recovers the induction hypothesis \eqref{eq:ind-hypo} at $t_{m+1}$.
Plugging in \eqref{eq:eux-est-fin} and Lemma \ref{lemma:dm} into \eqref{eq:ek-est-s}, we get
\begin{align}\label{eq:ek-est-fin}
\max_{j=0,\cdots,m}
\| \kappa_h^j - I_h(H^j n^j)^{-\ell} \|_{L^{2}(\hat\Gamma_{h,*}^j)}
\leq
C h^{-1}
\max_{j=0,\cdots,m}
\| e_\kappa^j \|_{H^{-1}(\hat\Gamma_{h,*}^j)}
\leq C
h^{-1}
(\tau + h^{k-1/2})
.
\end{align}
Then, plugging \eqref{eq:eux-est-fin}, \eqref{eq:ek-est-fin} and Lemma \ref{lemma:dm} into \eqref{eq:ep-est},
\begin{align}\label{eq:ep-est-fin}
&\Big(\sum_{j=0}^{m} \tau \| e_p^j \|_{L^2(\hat\Omega_{h,*}^j)}^2\Big)^{1/2}
\leq C
h^{-1/2}
(\tau + h^{k-1/2})
.
\end{align}
%Besides, using Lemma \ref{lemma:Rh-stab}, we have
%\begin{align}\label{eq:eux-est-fin1}
%	&\Big(\sum_{j=0}^{m} \tau \| u_h^j - I_h(u^j\circ\Phi^j) \|_{H^1(\hat\Omega_{h,*}^j)}^2\Big)^{1/2}
%	\notag\\
%	&\leq
%	\Big(\sum_{j=0}^{m} \tau \| e_u^j \|_{H^1(\hat\Omega_{h,*}^j)}^2\Big)^{1/2}
%	+
%	\Big(\sum_{j=0}^{m} \tau \| (R_h - I_h)(u^j\circ\Phi^j) \|_{H^1(\hat\Omega_{h,*}^j)}^2\Big)^{1/2}
%	\notag\\
%		&
%	\leq C
%	(\tau + h^{k+\r \beta(d)})
%	,
%\end{align}
%and
%\begin{align}\label{eq:ep-est-fin1}
%	&\Big(\sum_{j=0}^{m} \tau \| p_h^j - I_h(p^j\circ\Phi^j) \|_{L^2(\hat\Omega_{h,*}^j)}^2\Big)^{1/2}
%	\notag\\
%	&\leq
%	\Big(\sum_{j=0}^{m} \tau \| e_p^j \|_{L^2(\hat\Omega_{h,*}^j)}^2\Big)^{1/2}
%	+
%	\Big(\sum_{j=0}^{m} \tau \| (R_h - I_h)(p^j\circ\Phi^j) \|_{L^2(\hat\Omega_{h,*}^j)}^2\Big)^{1/2}
%	\notag\\
%	&
%	\leq C
%	h^{-1/2}
%	(\tau + h^{k+\r \beta(d)})
%	.
%\end{align}
From \eqref{eq:Phi-approx4}--\eqref{eq:Phi-approx5}, we see that $\bar I_h (p^m\circ\Phi^m)$ differs from $I_h (p^m\circ\Phi^m)$ by an optimal consistency error.
The proof of the main theorem (Theorem \ref{thm:main}) is now complete.

\appendix

\section{Notation}
\label{section:notation}\label{sec:notation}
The following symbols associated with the framework of the projection error are frequently used in this article. They are similar to those introduced in \cite[Section 3.1]{BL24FOCM} and are listed below for the convenience of the readers. 

\begin{longtable}{p{1.2cm}p{10cm}}
	$\Omega^m/\Gamma^m$:
	& 
	The exact smooth domain/interface at time level $t=t_m$.\\
	
	$\Omega_h^m/\Gamma_h^m$:
	&
	The numerically computed domain/interface at time level $t=t_m$.\\
	
	%	$\bfx^{m}$: 
	%	&
	%	The nodal vector $\bfx^m=(x_1^m,\dots,x_J^m)^\top$ consisting of the positions of nodes on $\Gamma_h^m$.\\
	%	
	%	$\hat\bfx_*^{m}$: 
	%	&
	%	The distance projection of $\bfx^{m}$ onto the exact curve $\Gamma^m$, i.e., $\hat\bfx_*^{m}=(\hat x_{1,*}^m,\dots,\hat x_{J,*}^m)^\top$ with $\hat x_{j,*}^m=a^m(x_j^m)$. \\
	%	
	%	$\bfx_*^{m+1}$: 
	%	&
	%	The new position of $\hat\bfx_*^{m}$ evolving {under the normal component of the prescribed velocity, i.e. $u(t)|_{\Gamma(t)}\cdot n(t) n(t)$}, (without additional tangential motion) from $t_m$ to $t_{m+1}$. \\
	
	$\Ohsm/\Ghsm$: 
	&
	The consistency domain/interface, defined via nodal projection, at time level $t=t_m$.\\
	
	$\Omega_{h,{\rm f}}^0/\Gamma_{h,{\rm f}}^0$: 
	&
	The flat consistency domain/interface, which has the same vertices as $\hat\Omega_{h,*}^0/\hat\Gamma_{h,*}^0$.\\
	
	$\Omega_{h,\#}^m/\Gamma_{h,\#}^m$: 
	&
	The consistency domain/interface, defined via interpolating the global smooth flow, at time level $t=t_m$.\\
	
	$\Gamma_{h,*}^m$: 
	&
	The consistency interface, defined via interpolating the local smooth flow, at time level $t=t_m$.\\
	
	$\Phi^m$: 
	&
	The Lenoir-type lifting operator, mapping $\Ohsm$ to $\Omega^m$.\\
	
	$X^{m}$: 
	&
	The global flow map from $\Omega^0$ to $\Omega^{m}$. \\
	
	$Y^{m}$: 
	&
	The local flow map from $\Omega^{m-1}$ to $\Omega^{m}$. \\
	
	$X_{h}^{m}$: 
	&
	The unique (up to nodal vector identification) finite element parametrization map whose graph is $\Omega_{h}^m$. \\
	
	$\hat X_{h,*}^{m}$: 
	&
	The unique (up to nodal vector identification) finite element parametrization map whose graph is $\hat\Omega_{h,*}^m$. \\
	
	$X_{h,\#}^{m}$: 
	&
	The unique (up to nodal vector identification) finite element parametrization map whose graph is $\Omega_{h,\#}^m$. \\
	
	$X_{h,*}^m$: 
	&
	The unique (up to nodal vector identification) finite element parametrization map whose graph is $\Gamma_{h,*}^m$. \\
	
	$\hat e_{x}^m$: 
	&
	The interface finite element error function for the position variable with nodal vector determined by $X_{h}^m - \hat X_{h,*}^m$.
	\\ 
	
	$E_h \hat e_{x}^m$: 
	&
	The bulk finite element error function for the position variable with nodal vector determined by $X_{h}^m - \hat X_{h,*}^m$.
	\\ 
	
	$e_{x,\#}^m$: 
	&
	The bulk finite element error function for the position variable with nodal vector determined by $X_{h}^m - X_{h,\#}^m$.\\ 
	
	$e_{x}^{m}$: 
	&
	The interface finite element error function for the position variable with nodal vector determined by $X_{h}^m - X_{h,*}^m$.
	\\ 
	
	$n^m$: 
	&
	The outward unit normal vector on $\Gamma^m$. \\
	
	$H^m$: 
	&
	The mean curvature on $\Gamma^m$. \\
	
	$n^m_*$: 
	&
	The outward unit normal vector of $\Gamma^m$ inversely lifted to a neighborhood of $\Gamma^m$ (including $\Ghsm$), i.e., $n^m_*:=n^m\circ a^m$. \\
	
	$n_h^m$: 
	&
	The outward unit normal vector on $\Gamma_{h}^m$. \\

	$N^m$: 
	&
	The normal projection operator $N^m=n^m (n^m)^\top$ on $\Gamma^m$.  \\
	
	$\Nsm$: 
	&
	The normal projection operator $\Nsm=n^m_* (n^m_*)^\top$.
	Thus $N_*^m$ is an extension of $N^m$ to a neighborhood of $\Gm$.
	\\
	
	$T^m$: 
	&
	The tangential projection operator $T^m=I - n^m (n^m)^\top$ on $\Gamma^m$. 
	\\
	
	$T_*^m$: 
	&
	The tangential projection operator $T_*^m=I - n^m_* (n^m_*)^\top$.
	Thus $T_*^m$ is an extension of $T^m$ to a neighborhood of $\Gm$.
	%	\\
	
\end{longtable}

\section{Calculus on moving domains}
\label{sec:surf_calc}

Given a $(d-1)$-dimensional smooth submanifold $\Gamma$ in $\R^{d}$ (with or without boundary) and a smooth function $u\in C^\infty(\Gamma)$, we denote by $\ud_i u, i = 1,\cdots,d$, the $i$th component of the surface gradient $\nabla_\Gamma u\in C^\infty(\Gamma;T\Gamma)$. Using the Einstein summation convention, the corresponding Leibniz rule, chain rule, integration-by-parts formula, commutator formulas, and the evolution equation of the normal vector are summarized below (see \cite[Lemma 5.1]{BL24FOCM} and references therein).
\begin{lemma}\label{lemma:ud}
	Let $\Gamma$ and $ \Gamma^\prime$ be two $(d-1)$-dimensional smooth submanifolds that are possibly open, such as smooth pieces of some parametrized finite element surfaces, and let $f, h \in C^\infty(\Gamma)$ and $g\in C^\infty(\Gamma^\prime; \Gamma)$ be given functions. Then the following results hold. 
	\begin{itemize}
		\item[1.]  $\ud_i(fh) = \ud_i f h + f\ud_i h$ on $\Gamma$.
		\item[2.] $\ud_i(f\circ g) = (\ud_j f\circ g)\, \ud_i g_j$ on $\Gamma'$.
		\item[3.] $\int_{\Gamma}f \ud_i h = -\int_{\Gamma}\ud_i f h + \int_{\Gamma}f h H n_i + \int_{\partial\Gamma}f h \mu_i$ where $n, \mu$ are the normal and co-normal (tangential) direction, respectively, and $H:=\ud_i n_i$ is the mean curvature, i.e. the trace of the second fundamental form.
		\item[4.] $\ud_i \ud_j f = \ud_j \ud_i f + n_i H_{jl} \ud_l f -  n_j H_{il} \ud_l f$, where $H_{ij} := \ud_i n_j = \ud_j n_i$.
		\item[5.] If $\Gamma$ evolves under the velocity field $v$, and $G_T := \bigcup_{t\in [0, T]}\Gamma(t) \times \{t\}$, then 
		$$\md(\ud_i f) = \ud_i (\md f )- (\ud_i v_j - n_i n_l \ud_j v_l)\ud_j f \quad\forall\, f \in C^2(G_T) ,$$
		where $\md$ denotes the material derivative with respect to $v$.
		\item[6.] If $f, h \in C^2(G_T)$ then 
		$$\frac{\d}{\d t}\int_{\Gamma} f h = \int_{\Gamma} \md f h + \int_{\Gamma} f\md h + \int_{\Gamma} f h (\nabla_\Gamma\cdot v).$$
		The divergence is defined as $\nabla_\Gamma\cdot v := \ud_i v_i$, which coincides with the intrinsic divergence on $\Gamma$ if $v$ is a tangential vector field. 
	\end{itemize}
\end{lemma}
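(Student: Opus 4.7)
The plan is to prove all six items in a unified way by working with the tangential projector $P_{ij} := \delta_{ij} - n_i n_j$ and the identity $\ud_i u = P_{ij}\partial_j \tilde u$, where $\tilde u$ is a $C^\infty$ extension of $u$ to a tubular neighborhood of $\Gamma$; for the constant-normal extension (where $\tilde u$ is constant along integral curves of $n$) one even has $\ud_i u = \partial_i \tilde u$ on $\Gamma$. The value on $\Gamma$ is independent of the choice of extension, so in each item I may extend according to whatever is most convenient.

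For items 1 and 2, I would apply the ordinary Leibniz rule to $\partial_j(\tilde f \tilde h)$ and the chain rule to $\partial_j(g\circ \tilde f)$ in $\R^d$, then project by $P_{ij}$; since $\tilde{f\tilde h}|_\Gamma = fh$ and $\widetilde{g\circ f}|_\Gamma = g\circ f$, the identities drop out. For item 3, I would invoke the surface divergence theorem
\[
\int_\Gamma \nabla_\Gamma\cdot X \;=\; \int_\Gamma H\,(X\cdot n) + \int_{\partial\Gamma} X\cdot\mu \qquad \forall X\in C^\infty(\Gamma;\R^d),
\]
applied to $X = fh\,e_i$; combining $\nabla_\Gamma\cdot X = \ud_i(fh) = \ud_i f\,h + f\,\ud_i h$ from item 1 with $X\cdot n = fh\,n_i$ and $X\cdot\mu = fh\,\mu_i$ yields item 3. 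For item 4, I would compute
\[
\ud_i\ud_j f \;=\; P_{il}\partial_l\!\left(P_{jk}\partial_k \tilde f\right) \;=\; P_{il}P_{jk}\partial_l\partial_k \tilde f \;+\; P_{il}(\partial_l P_{jk})\partial_k \tilde f,
\]
exploit that the first term is symmetric in $(i,j)$ while $\partial_l P_{jk} = -(\partial_l n_j)n_k - n_j(\partial_l n_k)$, and use $H_{ij}=\ud_i n_j = \ud_j n_i$ together with cancellations coming from $n_k \ud_k \tilde f = 0$ (for the constant-normal extension) to arrive at the asymmetric term $n_i H_{jl}\ud_l f - n_j H_{il}\ud_l f$.

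For item 6, I would parametrize $\Gamma(t)$ by the flow $\Phi_t$ of $v$ starting from $\Gamma(0)$, pull back the integral to the fixed reference surface, and differentiate using the standard Jacobian identity $\partial_t J(t) = J(t)\,(\nabla_\Gamma\cdot v)$; the definitions of $\md f$ and $\md h$ then deliver the transport formula. Item 5, the commutator of $\md$ with $\ud_i$, is the real obstacle: because the projector $P_{ij}$ itself moves with $\Gamma(t)$, differentiating $\ud_i f = P_{ij}\partial_j\tilde f$ in time produces both the flat commutator $\partial_j v_k\,\partial_k \tilde f$ (from the Lie-type identity $\md\partial_j\tilde f - \partial_j\md\tilde f = -(\partial_j v_k)\partial_k\tilde f$ after suitable extension) and a correction from $\md P_{ij}$, which requires the evolution law $\md n_i = -P_{jk}(\partial_k v_i)n_j$ on $\Gamma$. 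Carefully combining these two contributions, and then reprojecting the result onto $\Gamma$, is where the asymmetric corrector $n_i n_l\,\ud_j v_l$ appears; this bookkeeping, rather than any deep idea, is the step I expect to occupy most of the work.
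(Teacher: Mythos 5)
The paper offers no proof for this lemma: it simply summarizes these surface-calculus identities and points to \cite[Lemma 5.1]{BL24FOCM} ``and references therein,'' treating them as standard. So there is no internal proof to compare against, and your proposal is filling a gap rather than mirroring or diverging from the paper.

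Your overall strategy --- extend to a tubular neighbourhood, identify $\ud_i$ with the projected flat derivative $P_{ij}\partial_j$, and exploit the freedom in choosing the extension --- is the right one, and the outlines you give for items 1, 2, 3, 4 and 6 would indeed close. There is, however, a concrete error in the sketch for item 5. You quote the normal evolution law as $\md n_i = -P_{jk}(\partial_k v_i)n_j$, which is $-n_j\ud_j v_i$; but $n_j\ud_j \equiv 0$ identically (the surface gradient $\ud_j$ is tangential in its free index, so contracting it with $n_j$ annihilates it), so your stated law reads $\md n_i = 0$. With $\md n = 0$ the contribution from $\md P_{ij} = -\md(n_i n_j)$ vanishes, and the asymmetric corrector $n_i n_l\,\ud_j v_l$ that you need in item 5 never appears --- the bookkeeping you anticipate would simply produce the flat commutator term $-\ud_i v_j\,\ud_j f$ and nothing more. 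The correct law is the transpose, $\md n_i = -n_j\,\ud_i v_j$ (which is tangential, since $n_i\ud_i=0$ gives $n_i\md n_i=0$, consistent with $|n|=1$). Feeding this into $\md(P_{ij}\partial_j\tilde f)$ is exactly what generates the extra $n_i n_l\ud_j v_l\,\ud_j f$ piece; with that correction your plan goes through.
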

%\begin{proof}
%	The first two relations are obvious from the local formula of $\ud$ (cf. \cite[Eq. (5.1)]{BL24FOCM}). The third relation is shown in \cite[Theorem 2.10]{Dziuk2013b}. The fourth and fifth equalities are proved in \cite[Lemma 2.4 and 2.6]{Dz13b}, and the proof of the sixth and last formulae can be found in \cite[Appendix A]{DE2007IJNA} and \cite[p. 33]{Mantegazza2011} respectively.
%\end{proof}
On a moving bulk domain, Items 5 and 6 in Lemma \ref{lemma:ud} immediately reduce to the following result.
\begin{lemma}\label{lemma:udb}
	Given a bulk domain $\Xi(t)\subset \R^d$ which is deforming under a velocity field $v$, let $\md$ be the material derivative along $v$. Then, for any sufficiently smooth functions $f$ and $h$ defined on $\bigcup_{t\in [0, T]}\Xi(t) \times \{t\}$, we have the pointwise commutator identity
	\begin{align}
		\md(\nabla_i f) = \nabla_i (\md f )- \nabla_i v_j \nabla_j f , \notag
	\end{align}
	where $\nabla_i f := \frac{\partial}{\partial x_i} f, i=1,\cdots,d$ is the $i$th component of the Euclidean gradient, and
	\begin{align}
		\frac{\d}{\d t}\int_{\Xi} f h = \int_{\Xi} \md f h + \int_{\Xi} f\md h + \int_{\Xi} f h (\nabla\cdot v) . \notag
	\end{align}
\end{lemma}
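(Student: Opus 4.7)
The two identities are classical bulk analogues of Items 5 and 6 of Lemma \ref{lemma:ud}; both are cleanest via direct computation from the definition $\md f = \partial_t f + v_j \nabla_j f$, so I will take that route rather than trying to specialize the surface formulas (which would only add spurious curvature/normal bookkeeping).

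For the pointwise commutator identity, the plan is to expand both sides and compare. The product rule gives $\nabla_i(\md f) = \partial_t \nabla_i f + (\nabla_i v_j)\nabla_j f + v_j \nabla_i \nabla_j f$, while $\md(\nabla_i f) = \partial_t \nabla_i f + v_j \nabla_j \nabla_i f$. Since Euclidean partial derivatives commute, $\nabla_i \nabla_j f = \nabla_j \nabla_i f$, and subtracting the two expressions produces the stated formula. This is routine.

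For the Reynolds transport formula my plan is to pull back to the fixed reference domain $\Xi(0)$ via the flow map $\Phi_t$ of $v$, determined by $\partial_t \Phi_t(y) = v(\Phi_t(y), t)$ and $\Phi_0 = \mathrm{id}$. Writing
\begin{align*}
\int_{\Xi(t)} f h \, \d x \;=\; \int_{\Xi(0)} (f\circ\Phi_t)\,(h\circ\Phi_t)\, J_t \, \d y, \qquad J_t := \det D\Phi_t,
\end{align*}
I differentiate under the integral sign on $\Xi(0)$. The chain rule applied to $f\circ \Phi_t$ and $h\circ \Phi_t$ yields $(\md f)(\Phi_t(y),t)$ and $(\md h)(\Phi_t(y),t)$, respectively. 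Jacobi's formula combined with the variational equation $\partial_t D\Phi_t = (Dv)(\Phi_t,t)\, D\Phi_t$ gives $\partial_t J_t = J_t\,(\nabla\cdot v)(\Phi_t,t)$. Transforming the resulting integral back to $\Xi(t)$ by the inverse change of variables $J_t\,\d y = \d x$ delivers exactly the stated formula.

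The main obstacle is not analytical depth but regularity bookkeeping: one needs $v$ smooth enough so that $\Phi_t$ is a $C^1$ diffeomorphism of $\Xi(0)$ onto $\Xi(t)$ uniformly in $t\in[0,T]$, and $f,h$ smooth enough to justify differentiation under the integral. Both are covered by the ``sufficiently smooth'' hypothesis, so no genuine analytic difficulty arises; the lemma is a textbook result recorded here purely for reference in the error analysis (in particular for Lemma \ref{lemma:e-blinear-b}).
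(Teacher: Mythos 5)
Your proof is correct, but it deliberately takes a different route from the paper. The paper disposes of this lemma with the single sentence ``On a moving bulk domain, Items 5 and 6 in Lemma~\ref{lemma:ud} immediately reduce to the following result,'' i.e., it treats the bulk formulas as the codimension-zero specialization of the surface calculus: $\ud_i \to \nabla_i$, $\nabla_\Gamma\cdot \to \nabla\cdot$, and the normal-projection correction $n_i n_l \ud_j v_l$ in Item~5 drops out because there is no normal direction. You instead rederive both identities from scratch — the commutator by expanding $\md f = \partial_t f + v_j\nabla_j f$ on both sides and invoking commutation of mixed Euclidean partials, and the transport theorem by pulling back to $\Xi(0)$ via the flow map and using Jacobi's formula $\partial_t J_t = J_t(\nabla\cdot v)$. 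Both are standard and both computations check out. The trade-off: the paper's one-liner is shorter and reuses Lemma~\ref{lemma:ud}, but it asks the reader to accept the somewhat informal ``set $n=0$'' reduction; your direct argument is longer but self-contained and sidesteps the question of what the normal bookkeeping means in codimension zero — which is exactly the reason you gave for preferring it.
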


\section{Super-approximation estimates}
\label{sec:super}

Throughout this section, we denote by $m$ and $p$ two arbitrary numbers in $0,1,\cdots,[T/\tau]$ and $[1,\infty]$ respectively.
%We also assume $h\leq h_{\mum,\num}$.
In the framework of the projection error, the following super-approximation results are standard (cf. \cite[Section 3.5]{BL24FOCM} and \cite[Lemma 3.6]{GLW22}).
\begin{lemma}\label{lemma:super_conv}
	The following estimates hold for any piecewise smooth function $f$ and finite element functions $\phi_h,v_h, w_h\in S_h(\Ghsm)$: 
	\begin{align*}
		\| (1 - I_h)(f \phi_h) \|_{L^p(\Ghsm)} &\lesssim \| f \|_{W_h^{k+1,\infty}(\Ghsm)} h \| \phi_h \|_{L^{p}(\Ghsm)} , \notag\\
		\| \nabla_\Ghsm (1 - I_h)(f \phi_h) \|_{L^p(\Ghsm)} &\lesssim \| f \|_{W_h^{k+1,\infty}(\Ghsm)} h \| \phi_h \|_{W^{1,p}(\Ghsm)} ,\\
		\| (1 - I_h)(v_h w_h) \|_{L^p(\Ghsm)} &\lesssim  h^2 \| v_h \|_{W^{1,q}(\Ghsm)} \| w_h \|_{W^{1,r}(\Ghsm)} ,\\
		\| \nabla_{\Ghsm}(1 - I_h)(v_h w_h) \|_{L^p(\Ghsm)} &\lesssim  h \| v_h \|_{W^{1,q}(\Ghsm)} \| w_h \|_{W^{1,r}(\Ghsm)} ,
	\end{align*}
	for any $1/p=1/q+1/r$.
\end{lemma}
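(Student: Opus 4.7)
The plan is to prove Lemma~\ref{lemma:super_conv} by a standard element-wise argument via pullback to the reference simplex, in the spirit of \cite[Section~3.5]{BL24FOCM} and \cite[Lemma~3.6]{GLW22}. For each $K\in\mathcal T(\Ghsm)$ with polynomial parametrization $F_K:\hat K\to K$ of degree $k$, I set $\hat f := f\circ F_K$ and $\hat\phi_h := \phi_h\circ F_K\in\mathbb P^k(\hat K)$; the pullback of $I_h$ is the nodal Lagrange interpolant $\hat I$ on the fixed reference element. Since all norms on finite-dimensional polynomial spaces on $\hat K$ are equivalent (with constants absorbed into $\nu_m$ via the shape-regularity of $F_K$), everything reduces to Bramble--Hilbert type estimates combined with careful tracking of the iso-parametric scaling.

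For the first two estimates, I would exploit the pointwise identity
\begin{equation*}
(1-\hat I)(\hat f\hat\phi_h)(\hat x) \;=\; \sum_i \hat\phi_h(\hat x_i)\bigl[\hat f(\hat x) - \hat f(\hat x_i)\bigr]\hat\lambda_i(\hat x),
\end{equation*}
which uses the exact expansion $\hat\phi_h=\sum_i\hat\phi_h(\hat x_i)\hat\lambda_i$ valid in $\mathbb P^k(\hat K)$. The oscillation $|\hat f(\hat x)-\hat f(\hat x_i)|$ is bounded by $C\|\nabla_{\hat K}\hat f\|_{L^\infty(\hat K)}\lesssim h\|f\|_{W^{1,\infty}(K)}$ via the mean value inequality and the chain rule $\nabla_{\hat K}\hat f=(\nabla_K f)\,DF_K$ with $\|DF_K\|_{L^\infty}\sim h$. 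Polynomial norm equivalence on $\hat K$ yields $\sum_i|\hat\phi_h(\hat x_i)|\,\|\hat\lambda_i\|_{L^p(\hat K)}\lesssim \|\hat\phi_h\|_{L^p(\hat K)}$, and scaling back to $K$ via the Jacobian $|\det DF_K|\sim h^{d-1}$ cancels the weight to produce the claimed $L^p$ bound. The gradient version follows by differentiating the identity into two terms $\nabla\hat f\,\hat\lambda_i$ and $(\hat f-\hat f(\hat x_i))\nabla\hat\lambda_i$; the extra $h^{-1}$ factor from $\nabla_K = DF_K^{-T}\nabla_{\hat K}$ is paid for by the $W^{1,p}$ factor on the right (via an inverse estimate applied to nodal values). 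Assembling over $K$ by Minkowski's inequality and using $\|f\|_{W^{1,\infty}}\leq\|f\|_{W^{k+1,\infty}}$ finishes the first two lines.

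For the last two estimates, observe that $\hat v_h\hat w_h\in\mathbb P^{2k}(\hat K)$ while $(1-\hat I)$ annihilates $\mathbb P^k(\hat K)$, so Bramble--Hilbert gives $\|(1-\hat I)(\hat v_h\hat w_h)\|_{L^p(\hat K)}\lesssim |\hat v_h\hat w_h|_{W^{k+1,p}(\hat K)}$. I would expand the $(k+1)$-th derivative by Leibniz and discard the two endpoint terms $D^{k+1}\hat v_h\equiv 0\equiv D^{k+1}\hat w_h$; the remaining cross terms $D^{j}\hat v_h\cdot D^{k+1-j}\hat w_h$ with $1\leq j\leq k$ are bounded by $\|\hat v_h\|_{W^{1,q}(\hat K)}\|\hat w_h\|_{W^{1,r}(\hat K)}$ via polynomial norm equivalence with H\"older's inequality (using $1/p=1/q+1/r$). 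Scaling back to $K$ picks up the balanced factor $h^{(d-1)(1/p-1/q-1/r)+2}=h^2$ thanks to the exponent identity, and the gradient version loses exactly one power of $h$ from $DF_K^{-T}$. Assembly over elements is then done by H\"older's inequality with exponents $q/p$ and $r/p$, whose reciprocal sum is one by the same exponent identity.

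The main obstacle, in my view, is the bookkeeping of iso-parametric scaling rather than any genuinely new ingredient: all products of $D^l F_K$ arising from the chain rule must be absorbed into $\nu_m$ (which is permissible because $F_K\in\mathbb P^k(\hat K)$ on the fixed reference element, so higher derivatives are controlled by $\|F_K\|_{W^{1,\infty}}$ up to $\nu_m$-dependent constants), and the resulting powers of $h$ from the Jacobian and gradient rescaling must combine with the exponent constraint $1/p=1/q+1/r$ so the final dimensional count closes cleanly. Once this bookkeeping is in place, the remainder of the argument is standard and essentially identical to the estimates already recorded in \cite{BL24FOCM,GLW22}.
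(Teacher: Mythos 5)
The paper itself gives no proof here—it defers to \cite{BL24FOCM} and \cite{GLW22}—so the question is whether your blind proof is correct, not whether it matches a paper argument.

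Your identity $(1-\hat I)(\hat f\hat\phi_h)=\sum_i\hat\phi_h(\hat x_i)\bigl(\hat f-\hat f(\hat x_i)\bigr)\hat\lambda_i$ proves the first estimate cleanly, and your Bramble--Hilbert plus Leibniz argument for the last two (discarding $D^{k+1}\hat v_h=D^{k+1}\hat w_h=0$) is sound, provided one is careful to retain the $W^{1,\cdot}$ \emph{seminorms} on $\hat K$ rather than the full norms, since the two extra powers of $h$ that close the scaling come precisely from the two first-order seminorm factors. Those three estimates are fine.

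The argument for the second estimate, however, has a genuine gap. Differentiating your identity and using $|\hat f(\hat x)-\hat f(\hat x_i)|\lesssim h\|f\|_{W^{1,\infty}}$ together with nodal norm equivalence gives $\|\nabla_{\hat K}(1-\hat I)(\hat f\hat\phi_h)\|_{L^p(\hat K)}\lesssim h\|f\|_{W^{1,\infty}}\|\hat\phi_h\|_{L^p(\hat K)}$; pushing this through $\nabla_K=DF_K^{-T}\nabla_{\hat K}$ and the Jacobian yields only
$\|\nabla_{\Ghsm}(1-I_h)(f\phi_h)\|_{L^p(K)}\lesssim\|f\|_{W^{1,\infty}}\|\phi_h\|_{L^p(K)}$.
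The $h^{-1}$ from $DF_K^{-T}$ exactly cancels the $h$ from the oscillation of $f$, leaving no surplus factor of $h$. This is strictly weaker than the stated $h\|f\|_{W^{k+1,\infty}}\|\phi_h\|_{W^{1,p}}$; for $\phi_h$ essentially constant on an element one has $\|\phi_h\|_{L^p}\sim\|\phi_h\|_{W^{1,p}}$, and your bound then misses a whole factor of $h$. The parenthetical appeal to ``an inverse estimate applied to nodal values'' does not repair this—inverse estimates run the wrong direction. And the loss is not harmless downstream: the corollary Lemma \ref{lemma:T<=N2}, used to establish the super-approximation $\|\Tsm\ehxm\|_{W^{1,p}}\lesssim h\|\ehxm\|_{W^{1,p}}$, needs exactly that $h$.

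The fix is to handle the second estimate the same way you handle the last two. Apply Bramble--Hilbert $\|(1-\hat I)u\|_{W^{1,p}(\hat K)}\lesssim|u|_{W^{k+1,p}(\hat K)}$ with $u=\hat f\hat\phi_h$, expand $D^{k+1}u$ by Leibniz, and use that $D^{k+1}\hat\phi_h=0$. For the surviving terms $D^j\hat f\,D^{k+1-j}\hat\phi_h$ with $1\leq j\leq k$ you have $k+1-j\geq 1$, so $\|D^{k+1-j}\hat\phi_h\|_{L^p(\hat K)}\lesssim|\hat\phi_h|_{W^{1,p}(\hat K)}$ by polynomial seminorm equivalence; after pulling $|\hat\phi_h|_{W^{1,p}(\hat K)}\sim h\,h^{-(d-1)/p}|\phi_h|_{W^{1,p}(K)}$ back, this factor supplies the extra $h$ that your identity approach cannot produce. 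The endpoint $j=k+1$ gives $h^{k+1}\|f\|_{W^{k+1,\infty}}\|\hat\phi_h\|_{L^p(\hat K)}$, hence $h^{k}\|f\|_{W^{k+1,\infty}}\|\phi_h\|_{L^p(K)}$ after scaling, which is $\lesssim h\|f\|_{W^{k+1,\infty}}\|\phi_h\|_{W^{1,p}(K)}$ for $k\geq1$. Note the dependence on $\|f\|_{W^{k+1,\infty}}$ is actually \emph{needed} in this route (to control $D^j\hat f$ for all $j$ up to $k+1$); it is not just a coarsening of $\|f\|_{W^{1,\infty}}$ as your write-up suggests.
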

As a direct application of Lemma \ref{lemma:super_conv}, we have
\begin{lemma}\label{lemma:T<=N2}
	The following estimates hold:
	\begin{align} 
		\| \Tsm \ehxm \|_{L^p(\Ghsm)}
		&\lesssim
		h\| \ehxm \|_{L^p(\Ghsm)}  , \notag\\
		\| \Tsm \ehxm \|_{W^{1,p}(\Ghsm)}
		&\lesssim
		h\| \ehxm \|_{W^{1,p}(\Ghsm)} 
		\notag .
	\end{align}
\end{lemma}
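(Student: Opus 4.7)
The plan is to exploit an exact nodal orthogonality $I_h(\Tsm \ehxm) \equiv 0$, which then turns the estimate into a direct application of Lemma~\ref{lemma:super_conv}.

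First, I would establish the nodal orthogonality. By the very definition of $\hat X_{h,*}^m$ in Section~\ref{section:geometry}, each node $p$ of $\Ghsm$ lies on $\Gm$ and is the $\Nsm$-projection of the corresponding node of $\Ghm$. Since the projection $\Nsm = \nsm (\nsm)^\top$ displaces points exactly along the extended normal direction $\nsm$, the nodal value of $\ehxm = X_h^m - \hat X_{h,*}^m$ at such a node $p$ is strictly parallel to $\nsm(p) = n^m(p)$; the last equality uses that $p\in\Gm$, so $a^m(p) = p$. Therefore at each node,
\begin{align*}
\Tsm(p)\,\ehxm(p)
=(I - n^m(p)\,n^m(p)^\top)\cdot(\text{scalar multiple of } n^m(p)) = 0,
\end{align*}
and since a finite element function is determined by its nodal values, we conclude $I_h(\Tsm \ehxm) \equiv 0$ on $\Ghsm$.

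Next, under the induction hypothesis~\eqref{eq:ind-hypo} the surface $\Ghsm$ lies inside the tubular neighborhood of $\Gm$ on which the extended projection $\Tsm = I - \nsm(\nsm)^\top$ is smooth, so $\Tsm$ (componentwise) belongs to $W_h^{k+1,\infty}(\Ghsm)$ with norm bounded by a constant depending only on $\num$. Writing $\Tsm \ehxm = (1-I_h)(\Tsm \ehxm)$ and applying Lemma~\ref{lemma:super_conv} componentwise with $f = \Tsm$ and $\phi_h = \ehxm$ gives
\begin{align*}
\|\Tsm \ehxm\|_{L^p(\Ghsm)}
&= \|(1 - I_h)(\Tsm \ehxm)\|_{L^p(\Ghsm)} \lesssim h\,\|\ehxm\|_{L^p(\Ghsm)}, \\
\|\nabla_\Ghsm (\Tsm \ehxm)\|_{L^p(\Ghsm)}
&= \|\nabla_\Ghsm (1 - I_h)(\Tsm \ehxm)\|_{L^p(\Ghsm)} \lesssim h\,\|\ehxm\|_{W^{1,p}(\Ghsm)},
\end{align*}
and adding the two yields the desired $W^{1,p}$ bound.

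I do not anticipate any serious obstacle. The only subtle point is the verification of the geometric identity $\Tsm \ehxm = 0$ at nodes, which reduces to the fact that the $\Nsm$-projection onto $\Gm$ produces a displacement aligned with the true surface normal at the projected (on-$\Gm$) point; once this is recognized, the remainder is a clean invocation of the standard super-approximation estimates.
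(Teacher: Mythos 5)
Your proof is correct and follows exactly the route the paper intends: the nodal orthogonality $I_h(\Tsm\ehxm)\equiv 0$ follows from the definition of $\hat X_{h,*}^m$ as the distance-projection (each nodal displacement $X_h^m-\hat X_{h,*}^m$ is along $\nsm$ evaluated at the on-$\Gm$ node, and there $\nsm=n^m$), which is precisely the fact the paper itself invokes later (``$I_h\Tsm\ehxm=0$ is used'' in the proof of Lemma~\ref{lemma:e-convert}), and the rest is the stated ``direct application of Lemma~\ref{lemma:super_conv}'' with $f=\Tsm$, $\phi_h=\ehxm$.
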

Besides, a nonlinear version of Lemma \ref{lemma:super_conv} can be proved using the chain rule and product rule of differentiation (cf. Items 1 and 2 in Lemma \ref{lemma:ud}).
\begin{lemma}\label{lemma:super_conv-nonlinear}
	Given a function $f\in W^{k+2,\infty}(D)$, defined on some open bulk region $D\subset \R^d$, and any vector-valued finite element functions $\phi_h, \psi_h \in S_h(\Ghso)^d$, whose ranges are contained in $D$, we have
	\begin{align*}
		\| (1 - I_h)(f\circ\phi_h - f\circ\psi_h) \|_{L^p(\Ghso)} &\leq C h \| \phi_h - \psi_h \|_{L^{p}(\Ghso)} , 
		\notag\\
		\| \nabla_\Ghso (1 - I_h)(f\circ\phi_h - f\circ\psi_h) \|_{L^p(\Ghso)} &\leq C h \| \phi_h - \psi_h \|_{W^{1,p}(\Ghso)}  . 
	\end{align*}
	Consequently, from the triangle inequality and Lipschitz continuity, it holds that
	\begin{align*}
		\| I_h(f\circ\phi_h - f\circ\psi_h) \|_{L^p(\Ghso)} &\leq C \| \phi_h - \psi_h \|_{L^{p}(\Ghso)} , 
		\notag\\
		\| I_h (f\circ\phi_h - f\circ\psi_h) \|_{W^{1,p}(\Ghso)} &\leq C \| \phi_h - \psi_h \|_{W^{1,p}(\Ghso)}  .
	\end{align*}
	The constants $C$ above depend on $\| f \|_{W^{k+2,\infty}(D)}$, $\| \phi_h \|_{W^{1,\infty}(\Ghso)}$ and $\| \psi_h \|_{W^{1,\infty}(\Ghso)}$.
\end{lemma}
\begin{proof}
	For simplicity of presentation, we only present the proof for the $L^p$ stability estimate. The $W^{1,p}$ estimate can be established analogously.
	
	%	If we do the nodal identification $\phi_h, \psi_h \in S_h(\Ghso)^d: \Ghso \rightarrow \Ghso$, then, from the chain rule,
	From the chain rule,
	\begin{align}
		%		&
		\nabla_\Ghso^{k+1} (f\circ \phi_h - f\circ \psi_h)
		%		\notag\\
		&=
		\nabla_D^{k+1} f \circ \phi_h (\nabla_\Ghso\phi_h)^{k+1} 
		-
		\nabla_D^{k+1} f \circ \psi_h (\nabla_\Ghso\psi_h)^{k+1}
		%		\notag\\
		%		&\quad
		+ \cdots
		\notag\\
		&\quad
		+
		\nabla_D^{2} f \circ \phi_h \nabla_\Ghso \phi_h \nabla_\Ghso^{k} \phi_h
		-
		\nabla_D^{2} f \circ \psi_h \nabla_\Ghso \psi_h \nabla_\Ghso^{k} \psi_h . \notag
	\end{align}
	Here, the contraction rules for gradients are conventional; cf. Fa\'a di Bruno's formula.
	The first difference on the right-hand side can be bounded as follows
	\begin{align}
		&\| \nabla_D^{k+1} f \circ \phi_h (\nabla_\Ghso\phi_h)^{k+1}
		-
		\nabla_D^{k+1} f \circ \psi_h (\nabla_\Ghso\psi_h)^{k+1} \|_{L^p(\Ghso)}
		\notag\\
		&\leq
		\| ( \nabla_D^{k+1} f \circ \phi_h - \nabla_D^{k+1} f \circ \psi_h) (\nabla_\Ghso\phi_h)^{k+1}
		\|_{L^p(\Ghso)}
		\notag\\
		&\quad
		+
		\|
		\nabla_D^{k+1} f \circ \psi_h \big((\nabla_\Ghso\psi_h)^{k+1} - (\nabla_\Ghso\phi_h)^{k+1}\big) \|_{L^p(\Ghso)}
		\notag\\
		&\leq
		C
		\| f \|_{W^{k+2,\infty}(D)}
		\| \phi_h - \psi_h \|_{L^p(\Ghso)}
		\| (\nabla_\Ghso\phi_h)^{k+1} \|_{L^\infty(\Ghso)}
		\notag\\
		&\quad
		+
		C
		\| f \|_{W^{k+1,\infty}(D)}
		\| \nabla_\Ghso (\phi_h - \psi_h) \|_{L^p(\Ghso)}
		\notag\\
		&\qquad
		\times
		(
		\| (\nabla_\Ghso\phi_h)^k \|_{L^\infty(\Ghso)}
		+
		\| (\nabla_\Ghso\psi_h)^k \|_{L^\infty(\Ghso)}
		)
		\notag\\
		&\leq
		C
		(1 + h^{-1})
		\| \phi_h - \psi_h \|_{L^p(\Ghso)}, \notag
	\end{align}
	and similarly for the last difference,
	\begin{align}
		&\| \nabla_D^{2} f \circ \phi_h \nabla_\Ghso \phi_h \nabla_\Ghso^{k} \phi_h
		-
		\nabla_D^{2} f \circ \psi_h \nabla_\Ghso \psi_h \nabla_\Ghso^{k} \psi_h \|_{L^p(\Ghso)}
		\notag\\
		&\leq
		C
		\| f \|_{W^{3,\infty}(D)}
		\| \phi_h - \psi_h \|_{L^p(\Ghso)}
		\notag\\
		&\qquad
%		\qquad\qquad\qquad\qquad\qquad
		\times
		(
		\| \nabla_\Ghso \phi_h \nabla_\Ghso^{k} \phi_h \|_{L^\infty(\Ghso)}
		+
		\| \nabla_\Ghso \psi_h \nabla_\Ghso^{k} \psi_h \|_{L^\infty(\Ghso)}
		)
		\notag\\
		&\quad
		+
		C
		\| f \|_{W^{2,\infty}(D)}
		\| \nabla_\Ghso \phi_h - \nabla_\Ghso \psi_h \|_{L^p(\Ghso)}
%		\notag\\
%		&\qquad
%		\times
		\notag\\
		&\qquad
		\times
		(
		\| \nabla_\Ghso^k \phi_h \|_{L^\infty(\Ghso)}
		+
		\| \nabla_\Ghso^k \psi_h \|_{L^\infty(\Ghso)}
		)
		\notag\\
		&\quad
		+
		C
		\| f \|_{W^{2,\infty}(D)}
		\| \nabla_\Ghso^k \phi_h - \nabla_\Ghso^k \psi_h \|_{L^p(\Ghso)}
		\notag\\
		&\qquad
		\times
		(
		\| \nabla_\Ghso \phi_h \|_{L^\infty(\Ghso)}
		+
		\| \nabla_\Ghso \psi_h \|_{L^\infty(\Ghso)}
		)
		\notag\\
		&\leq
		C
		(1 + h^{-k})
		\| \phi_h - \psi_h \|_{L^p(\Ghso)}, \notag
	\end{align}
	where, in the last inequality, we have applied the inverse inequality on $\Ghso$ $k$ times.
	
	Finally, from interpolation error estimate (Lemma \ref{lemma:Ih}), we obtain
	\begin{align}
		&\| (1 - I_h) (f\circ\phi_h - f\circ\psi_h) \|_{L^p(\Ghso)} 
		\notag\\
		&\leq
		C h^{k+1}
		\| f\circ\phi_h - f\circ\psi_h \|_{W_h^{k+1,p}(\Ghso)} 
		\notag\\
		&\leq
		C
		h^{k+1} (1 + h^{-1} + \cdots + h^{-k})
		\| \phi_h - \psi_h \|_{L^p(\Ghso)}
		\notag\\
		&\leq
		C
		h
		\| \phi_h - \psi_h \|_{L^p(\Ghso)}
		, \notag
	\end{align}
	where the constant $C$ depends on $\| f \|_{W^{k+2,\infty}(D)}$, $\| \phi_h \|_{W^{1,\infty}(\Ghso)}$ and $\| \psi_h \|_{W^{1,\infty}(\Ghso)}$.
	
	The proof is now complete.
\end{proof}

Based on the approximation property of Gauss--Lobatto quadrature rule, we have
\begin{lemma}\label{lemma:super_conv2}
	When $d=2$, let $f$ be a function which is smooth on every element $K$ of $\Ghsm$, and assume that the pull-back function $f\circ F_K $ vanishes at all the Gauss--Lobatto points of the flat segment $K_{\rm f}^0$ for every element $K$ of $\Ghsm$. Then the following estimate holds: 
	\begin{align}
		\Big|\int_\Ghsm f \d\xi \Big| 
		\lesssim h^{2k} \| f \|_{W^{2k,1}_h(\Ghsm)} , \notag
	\end{align}
	where $\|\cdot\|_{W^{2k,1}_h(\Ghsm)}$ denotes the piecewise $W^{2k,1}$ norm. 
\end{lemma}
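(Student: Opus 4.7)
The plan is a standard Bramble--Hilbert argument performed on the reference element, combined with the exactness of the Gauss--Lobatto rule. I will work one curved element $K \in \mathcal T(\Ghsm)$ at a time and then sum.

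First, via the parametrization $F_K : \hat K \to K$ and the canonical identification of nodal positions with those on the flat element $K_{\rm f}^0$, change variables to rewrite
\begin{equation*}
    \int_K f \, \mathrm d\xi = \int_{\hat K} (f \circ F_K)\, J_K \, \mathrm d\hat\xi,
\end{equation*}
where $J_K$ is the surface Jacobian of $F_K$. Let $Q_{\hat K}$ denote the $(k+1)$-node Gauss--Lobatto quadrature rule on $\hat K$ (a tensorized product on the reference simplex). By hypothesis, $f \circ F_K$ vanishes at every Gauss--Lobatto node of $K_{\rm f}^0$, hence at every Gauss--Lobatto node $\hat\xi_i \in \hat K$ under the canonical nodal identification. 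Consequently
\begin{equation*}
    Q_{\hat K}\bigl[(f \circ F_K)\, J_K\bigr] = \sum_i w_i\, (f \circ F_K)(\hat\xi_i)\, J_K(\hat\xi_i) = 0,
\end{equation*}
so that $\int_K f$ equals exactly the quadrature error $E_{\hat K}[(f \circ F_K) J_K]$.

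Second, I will apply the Bramble--Hilbert lemma: since the $(k+1)$-point Gauss--Lobatto rule is exact on polynomials of degree $\leq 2k-1$, standard theory (see e.g.\ Ciarlet) yields
\begin{equation*}
    \bigl|E_{\hat K}[g]\bigr| \leq C\, \|g\|_{W^{2k,1}(\hat K)} \qquad \forall\, g \in W^{2k,1}(\hat K).
\end{equation*}
Apply this to $g = (f \circ F_K)\, J_K$ and expand the $2k$-th derivative via the Leibniz rule. Using the shape regularity bound $\kappa(\Ghsm)\lesssim \num$ together with the elementary chain-rule estimates $\|\nabla_{\hat K}^j F_K\|_{L^\infty(\hat K)} \lesssim h^{\min(j,1)}$ and $\|\nabla_{\hat K}^j J_K\|_{L^\infty(\hat K)} \lesssim h^{d-1}$ (for $j \leq 2k$, which is where the smoothness of $F_K$ on each element matters), every chain-rule term is absorbed into a constant $C$, and each $\hat\xi$-derivative of $f \circ F_K$ converts to a physical derivative of $f$ paid for by one power of $h$ from $DF_K$.

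Third, do the scaling. The change of variables back to $K$ gives
\begin{equation*}
    \|\nabla_{\hat K}^j (f \circ F_K)\|_{L^1(\hat K)} \lesssim h^{j-(d-1)}\, \|\nabla_K^j f\|_{L^1(K)},
\end{equation*}
and the factor $J_K \sim h^{d-1}$ cancels the $h^{-(d-1)}$ exactly. Summing $j$ from $0$ to $2k$ and keeping only the leading contribution (lower-order terms in $h$ yield better powers and are absorbed), we obtain element-wise
\begin{equation*}
    \Bigl|\int_K f \, \mathrm d\xi\Bigr| \lesssim h^{2k}\, \|f\|_{W^{2k,1}(K)}.
\end{equation*}
Summing over all $K \in \mathcal T(\Ghsm)$ produces the desired global bound with the piecewise norm $\|\cdot\|_{W^{2k,1}_h(\Ghsm)}$.

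The main technical point I expect to need care with is the Leibniz expansion of $\nabla_{\hat K}^{2k}[(f\circ F_K) J_K]$: one must verify that for $K$ curved (iso-parametric), the higher-order derivatives of $F_K$ and $J_K$ on $\hat K$ remain controlled by $\num$ and $\nusm$ --- which is exactly the content of the shape regularity quantities in \eqref{P} --- and that no derivative of order $>2k$ of $f$ ever appears, so that every distributed derivative falls on either $f$ (up to order $2k$) or on the smooth geometric factors. Everything else is accounting of powers of $h$ coming from the chain rule and the Jacobian scaling.
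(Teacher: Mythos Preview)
Your approach---quadrature error $=$ the whole integral (since $f\circ F_K$ vanishes at the nodes), then Bramble--Hilbert exploiting the $(2k{-}1)$-exactness of Gauss--Lobatto, then scaling---is exactly what the paper has in mind when it says the lemma follows ``from the approximation property of the Gauss--Lobatto quadrature rule.'' So the strategy is right.

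The gap is in your scaling bookkeeping. You state the \emph{weak} bounds $\|\nabla_{\hat K}^j F_K\|_{L^\infty}\lesssim h^{\min(j,1)}$ and $\|\nabla_{\hat K}^j J_K\|_{L^\infty}\lesssim h^{d-1}$, and then assert that in the Leibniz expansion of $\nabla_{\hat K}^{2k}[(f\circ F_K)J_K]$ the ``lower--order terms in $h$ yield better powers and are absorbed.'' Under those weak bounds that is false: the extreme Leibniz term with \emph{all} $2k$ derivatives landing on $J_K$ contributes
\[
\int_{\hat K}\bigl|(f\circ F_K)\,\nabla_{\hat K}^{2k}J_K\bigr|
\;\lesssim\; h^{d-1}\int_{\hat K}|f\circ F_K|
\;\sim\;\|f\|_{L^1(K)},
\]
with \emph{no} factor $h^{2k}$ at all. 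More generally, the Fa\`a di Bruno term $\tilde f^{(l)}\,B_{j,l}(\sigma',\sigma'',\dots)$ only picks up $h^{l}$ under your bounds (one $h$ per factor in the Bell polynomial), so after undoing the Jacobian you end up with $\sum_{l=0}^{2k} h^{l}\|f^{(l)}\|_{L^1(K)}$ rather than $h^{2k}\|f\|_{W^{2k,1}(K)}$.

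The fix is to use the \emph{strong} iso-parametric scaling $\|\nabla_{\hat K}^{j}F_K\|_{L^\infty(\hat K)}\lesssim h^{j}$ for $1\le j\le k$ (equivalently $\|\nabla_{\hat K}^{j}J_K\|_{L^\infty}\lesssim h^{d-1+j}$), which is what actually holds for shape-regular curved elements sitting on a smooth surface and is what the constants $\nu_m,\nu_{*,m}$ are designed to control. With this, every Bell monomial $\prod \sigma^{(m_i)}$ with $\sum m_i=j$ scales like $h^{j}$ (not $h^{l}$), every Leibniz cross term scales like $h^{2k+d-1}$, and after the change of variables back to $K$ you recover $h^{2k}\|f\|_{W^{2k,1}(K)}$ uniformly in $j$ and $l$. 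Equivalently, you can pull back to the flat element $K_{\rm f}^0$ of size $h$ (rather than a unit reference simplex): there the $h^{2k}$ comes straight from the Gauss--Lobatto error bound on an interval of length $h$, and all geometric factors $F_K,\,J_K$ and their derivatives are $O(1)$. Either way, the point you need to make explicit is that the higher reference-derivatives of the iso-parametric map decay like successive powers of $h$, not merely like $h$.

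A minor side remark: the ``tensorized product on the reference simplex'' comment is off---Gauss--Lobatto tensorises on cubes, not simplices---and the lemma's phrasing (``flat segment $K_{\rm f}^0$'') already signals the one-dimensional ($d=2$) setting in which it is actually used.
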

As a corollary of Lemma \ref{lemma:super_conv2}, we also have
\begin{lemma}\label{Lemma-GLW}
	When $d=2$,
	for a smooth function $f$ on $\Gm$, the following estimate holds if the interpolation nodes of $I_h$ coincide with the Gauss--Lobatto points: 
	\begin{align*}
		\Big| \int_{\Gm} \nabla_{\Gm} (f - (I_h f^{-\ell})^\ell) \cdot  \nabla_{\Gm}  \phi_h^\ell \Big|
		\lesssim 
		h^{k+1} \|f\|_{H^{2k}(\Gm)} \|\phi_h\|_{H^1(\Ghsm)} 
		\quad\forall\,\phi_h\in S_h(\Ghsm) . 
	\end{align*}
\end{lemma}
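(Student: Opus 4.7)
My plan exploits the \emph{one-dimensional} nature of $\Gamma^m$ (the context tells us this lemma is invoked only when $d=2$) and the exactness of the Gauss--Lobatto rule with $k+1$ nodes on polynomials of degree $\le 2k-1$. The target bound $h^{k+1}$ improves over the naive $h^k$ obtained from Cauchy--Schwarz applied to $\|\nabla(f-(I_hf^{-\ell})^\ell)\|_{L^2}$, so some super-convergent cancellation is required.

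\emph{Step 1: Move to the discrete surface.} Let $g := f^{-\ell} - I_h f^{-\ell}$ on $\hat\Gamma^m_{h,*}$. By a direct change of variable via the lift $a|_\Ghsm$, or equivalently by Lemma \ref{lemma:geo-perturb} applied to the gradient bilinear form,
\begin{align*}
\int_{\Gm}\nabla_{\Gm}(f-(I_hf^{-\ell})^\ell)\cdot\nabla_{\Gm}\phi_h^\ell
=\int_{\Ghsm}\nabla_\Ghsm g\cdot\nabla_\Ghsm \phi_h + R^m,
\end{align*}
with $|R^m|\lesssim h^{k+1}\|\nabla g\|_{L^2(\Ghsm)}\|\nabla\phi_h\|_{L^2(\Ghsm)}\lesssim h^{2k+1}\|f\|_{H^{k+1}(\Gm)}\|\phi_h\|_{H^1(\Ghsm)}$, which is already dominated by the target. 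Hence it suffices to bound the discrete-surface version.

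\emph{Step 2: Element-wise integration by parts.} On each 1D element $K\subset\Ghsm$ with arclength coordinate,
\begin{align*}
\int_K \nabla_\Ghsm g\cdot\nabla_\Ghsm \phi_h\,\d s
=\bigl[g\,\partial_\tau \phi_h\bigr]_{\partial K}-\int_K g\,\Delta_\Ghsm \phi_h\,\d s.
\end{align*}
Summing over $K$ produces at each interior node $p$ a contribution $g(p)\,\text{jump}_p(\partial_\tau\phi_h)$. Since every endpoint of an element is a Gauss--Lobatto interpolation node (the endpoints are always among the GL nodes in any $k+1$-point rule on $[0,1]$), $g(p)=0$ and all boundary contributions cancel. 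Thus
\begin{align*}
\int_\Ghsm \nabla_\Ghsm g\cdot\nabla_\Ghsm \phi_h = -\int_\Ghsm g\,\Delta_\Ghsm \phi_h.
\end{align*}

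\emph{Step 3: Apply the Gauss--Lobatto super-convergence.} Set $F := g\,\Delta_\Ghsm\phi_h$. The pullback of $g$ to each reference element $\hat K = [0,1]$ vanishes identically at all $k+1$ Gauss--Lobatto nodes, hence so does the pullback of $F$. Lemma \ref{lemma:super_conv2} therefore yields
\begin{align*}
\Big|\int_\Ghsm F\Big|\lesssim h^{2k}\|g\,\Delta_\Ghsm\phi_h\|_{W^{2k,1}_h(\Ghsm)}.
\end{align*}
The remaining work is to show $\|g\,\Delta_\Ghsm\phi_h\|_{W^{2k,1}_h(\Ghsm)}\lesssim h^{-(k-1)}\|f\|_{H^{2k}(\Gm)}\|\phi_h\|_{H^1(\Ghsm)}$. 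I apply the Leibniz rule, dropping metric factors of order $O(1)$: for $j\le k+1$ use the standard interpolation bound $\|D^j g\|_{L^2(K)}\lesssim h^{k+1-j}\|f\|_{H^{k+1}(K)}$; for $j\ge k+2$ use $D^j(I_h f^{-\ell})=0$ to obtain $\|D^j g\|_{L^2(K)}=\|D^j f^{-\ell}\|_{L^2(K)}\le \|f\|_{H^{j}(K)}$. Pair each factor with the inverse estimate $\|D^{2k-j}\Delta_\Ghsm\phi_h\|_{L^2(K)}\lesssim h^{-(2k-j+1)}\|\phi_h\|_{H^1(K)}$ and sum over $K$ by Cauchy--Schwarz; a short computation shows each Leibniz term contributes $\lesssim h^{-(k-1)}\|f\|_{H^{2k}(\Gm)}\|\phi_h\|_{H^1(\Ghsm)}$. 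Multiplying by $h^{2k}$ yields the claim.

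\emph{Main obstacle.} The bookkeeping in Step 3 is delicate on curved 1D elements: pulling back $\Delta_\Ghsm\phi_h$ produces the arclength Jacobian $|F_K'|$ and its derivatives (together with Christoffel-type terms), which are smooth but not polynomial. Bounds on their $W^{j,\infty}$-norms depend on $\nu_m$ via \eqref{P}, so one must verify that the generic constant absorbs these while the sharp power of $h$ is retained uniformly across $j\in\{0,\dots,2k\}$. A second subtlety is the crossover at $j=k+1$, where the ``interpolation'' bound on $\|D^j g\|_{L^2}$ switches to the ``raw'' estimate; it must be checked that this transition does not waste a factor of $h$.
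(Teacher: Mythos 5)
Your overall route — change of variable to $\Ghsm$ (Step 1), element-wise integration by parts using that Gauss--Lobatto nodes include the element endpoints so $g$ vanishes there and all jump contributions cancel (Step 2), and then Gauss--Lobatto superconvergence via Lemma \ref{lemma:super_conv2} (Step 3) — is the intended proof mechanism: the paper states the lemma as a corollary of Lemma \ref{lemma:super_conv2}, citing \cite{GLW22}, and does not give a free-standing argument. Steps 1 and 2 are correct.

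Step 3 as written does not close the power count. You need $\|g\,\Delta_\Ghsm\phi_h\|_{W_h^{2k,1}(\Ghsm)}\lesssim h^{-(k-1)}\|f\|_{H^{2k}(\Gm)}\|\phi_h\|_{H^1(\Ghsm)}$, but the inverse estimate you invoke, $\|D^{2k-j}\Delta_\Ghsm\phi_h\|_{L^2(K)}\lesssim h^{-(2k-j+1)}\|\phi_h\|_{H^1(K)}$, only yields the desired $h^{-(k-1)}$ for Leibniz terms with $j\ge k+2$. For the interpolation regime $j\le k+1$ it gives $h^{k+1-j}\cdot h^{-(2k-j+1)}=h^{-k}$, which is \emph{not} $h^{-(k-1)}$; after multiplying by $h^{2k}$ this leaves $h^{k}$, one power of $h$ short of the target $h^{k+1}$. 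The missing factor of $h$ must be recovered from the degree bound: since $\phi_h\circ F_K$ is a polynomial of degree $\le k$ on the reference segment, its reference derivatives of order $>k$ vanish, so the Fa\`a di Bruno expansion of the physical derivative $D^{\,2k-j+2}\phi_h$ (for $j\le k+1$, hence order $2k-j+2>k$) contains only reference derivatives of order $\le k$ multiplied by derivatives of $F_K^{-1}$. Carrying this out on iso-parametric 1D elements gives the sharp bound $\|D^{l}\phi_h\|_{L^2(K)}\lesssim h^{1-\min(l,k)}\|\nabla\phi_h\|_{L^2(K)}$ — the exponent truncates at $k$, so for $l=2k-j+2>k$ one gets $h^{1-k}$ uniformly in $j\le k+1$, and the worst Leibniz contribution is indeed $h^{-(k-1)}$ at $j=k+1$. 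This truncation is not merely a ``crossover subtlety to be verified'': it is the load-bearing observation that your cited inverse estimate does not supply, and without it the proof loses an $h$. Relatedly, ``dropping metric factors of order $O(1)$'' is not safe as stated — the metric coefficients in $\Delta_\Ghsm$ pulled back to $\hat K$ scale like negative powers of $h$, and the fact that their $\hat s$-derivatives gain positive powers of $h$ (a feature of the iso-parametric parametrization) is part of what makes the sharp count consistent and should be tracked explicitly.
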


\section{Proof of \eqref{eq:uhm-epm}}
\label{sec:H3/2}

We divide the proof of \eqref{eq:uhm-epm} into the following four steps.

Step 1: Using the moving-domain calculus formulas in Lemma \ref{lemma:udb}, we obtain
\begin{align}\label{eq:step1}
	&\int_\Ohm \nabla_\Ohm\cdot\uhm \epm
	-
	\int_\Ohsm \nabla_\Ohsm\cdot\uhm \epm
	\notag\\
	&=
	\int_0^1\frac{\d}{\d\theta}\bigg(\int_{\hat\Omega_{h,\theta}^m} \nabla_{\hat\Omega_{h,\theta}^m}\cdot\uhm \epm \bigg)\d\theta
	\notag\\
	&=
	\int_0^1
	\bigg(
	\int_{\hat\Omega_{h,\theta}^m}
	-{\rm tr}\big(\nabla_{\hat\Omega_{h,\theta}^m} E_h\ehxm \nabla_{\hat\Omega_{h,\theta}^m} \uhm\big)\epm
	+
	\nabla_{\hat\Omega_{h,\theta}^m}\cdot\uhm \epm (\nabla_{\hat\Omega_{h,\theta}^m} \cdot E_h\ehxm) 
	\bigg)
	\d\theta
	\notag\\
	&=:
	\int_{\hat\Omega_{h,*}^m}
	-{\rm tr}\big(\nabla_{\hat\Omega_{h,*}^m} E_h\ehxm \nabla_{\hat\Omega_{h,*}^m} \uhm\big)\epm
	+
	\nabla_{\hat\Omega_{h,*}^m}\cdot\uhm \epm (\nabla_{\hat\Omega_{h,*}^m} \cdot E_h\ehxm)
	+
	L_1^m ,
\end{align}
where $\hat\Omega_{h,\theta}^m$ is defined in Section \ref{sec:err-ind-hypo} and $L_1^m$ can be handled by repeating the argument of fundamental theorem of calculus, which will produce another factor of $E_h\ehxm$,
\begin{align}
	| L_1^m |
	&=:
	\bigg|
	\int_0^1 \bigg(\int_{\hat\Omega_{h,\theta}^m}
	-{\rm tr}\big(\nabla_{\hat\Omega_{h,\theta}^m} E_h\ehxm \nabla_{\hat\Omega_{h,\theta}^m} \uhm\big)\epm
	+{\rm tr}\big(\nabla_{\hat\Omega_{h,*}^m} E_h\ehxm \nabla_{\hat\Omega_{h,*}^m} \uhm\big)\epm
	\bigg)\d\theta
	\notag\\
	&\quad
	+
	\int_0^1
	\bigg(\int_{\hat\Omega_{h,*}^m}
	\nabla_{\hat\Omega_{h,\theta}^m}\cdot\uhm \epm (\nabla_{\hat\Omega_{h,\theta}^m} \cdot E_h\ehxm)
	-
	\nabla_{\hat\Omega_{h,*}^m}\cdot\uhm \epm (\nabla_{\hat\Omega_{h,*}^m} \cdot E_h\ehxm)
	\bigg) \d\theta
	\bigg|
	\notag\\
	&\lesssim
	(1 + \| \nabla_\Ohsm \eum \|_{L^\infty(\Ohsm)})
	\| \nabla_\Ohsm E_h\ehxm \|_{L^\infty(\Ohsm)}
	\| \nabla_\Ohsm E_h\ehxm \|_{L^2(\Ohsm)}
	\| \epm \|_{L^2(\Ohsm)} . \notag
\end{align}

Step 2: 
For simplicity, we only treat the second term on the right-hand side of \eqref{eq:step1}, as the first term can be handled analogously. 
Consider a linear bulk mesh transport from $\Omega_{h,\#}^m:=I_h(X^m\circ\Phi^0)$ to $\Ohsm$ via the finite element velocity $e_{x,\#}^m - E_h \ehxm$. Then, we have
\begin{align}
	&\int_{\hat\Omega_{h,*}^m}
	\nabla_{\hat\Omega_{h,*}^m}\cdot\uhm \epm (\nabla_{\hat\Omega_{h,*}^m} \cdot E_h\ehxm)
	\notag\\
	&=
	\int_{\Omega_{h,\#}^m}
	\nabla_{\Omega_{h,\#}^m}\cdot I_h(u^m\circ X^m\circ\Phi^0) \epm (\nabla_{\Omega_{h,\#}^m} \cdot E_h\ehxm)
	\notag\\
	&\quad
	+
	\int_{\Omega_{h,\#}^m}
	\nabla_{\Omega_{h,\#}^m}\cdot(\uhm - I_h(u^m\circ X^m\circ\Phi^0)) \epm (\nabla_{\Omega_{h,\#}^m} \cdot E_h\ehxm)
	\notag\\
	&\quad
	+
	\int_{\hat\Omega_{h,*}^m}
	\nabla_{\hat\Omega_{h,*}^m}\cdot\uhm \epm (\nabla_{\hat\Omega_{h,*}^m} \cdot E_h\ehxm)
	-
	\int_{\Omega_{h,\#}^m}
	\nabla_{\Omega_{h,\#}^m}\cdot\uhm \epm (\nabla_{\Omega_{h,\#}^m} \cdot E_h\ehxm)
	\notag\\
	&=:
	\int_{\Omega_{h,\#}^m}
	\nabla_{\Omega_{h,\#}^m}\cdot I_h(u^m\circ X^m\circ\Phi^0) \epm (\nabla_{\Omega_{h,\#}^m} \cdot E_h\ehxm)
	+L_2^m + L_3^m . \notag
\end{align}
In view of the decomposition $\uhm - I_h(u^m\circ X^m\circ\Phi^0) = \eum + \big(I_h(u^m\circ \Phi^m) - I_h(u^m\circ X^m\circ\Phi^0)\big)$, the second term on the right-hand side is similar to the $E_2^m$ term introduced in Section \ref{sec:traj-est}. We proceed to estimate $L_2^m$ as follows
\begin{align}
	| L_2^m |
	&\lesssim
	\Big(
	(1 + \musm) h^k
	+
	(1 + \nusm) h^{k+1/2}
	+
	\| e_{x,\#}^m  \|_{H^{1}(\Ohso)}
	+
	\| E_h \ehxm \|_{H^{1}(\Ohso)}
	\notag\\
	&\quad
	+
	\| \nabla_\Ohsm \eum \|_{L^2(\Ohsm)}
	\Big)
	\times
	\| \nabla_\Ohsm E_h\ehxm \|_{L^\infty(\Ohsm)}
	\| \epm \|_{L^2(\Ohsm)} . \notag
\end{align}
The term $L_3^m$ can be bounded using a standard argument based on the fundamental theorem of calculus:
\begin{align}
	| L_3^m |
	&\lesssim
	(1 + \| \nabla_\Ohsm \eum \|_{L^\infty(\Ohsm)})
	\| \nabla_\Ohsm (E_h\ehxm - e_{x,\#}^m) \|_{L^\infty(\Ohsm)}
	\notag\\
	&\quad
	\times
	\| \nabla_\Ohsm E_h\ehxm \|_{L^2(\Ohsm)}
	\| \epm \|_{L^2(\Ohsm)} . \notag
\end{align}

Step 3: We apply a change of variables via the pullback of $I_h(X^m\circ\Phi^0)\circ (\Phi^0)^{-1}: \Omega\rightarrow \Omega_{h,\#}^m$,
\begin{align}
	&\int_{\Omega_{h,\#}^m}
	\nabla_{\Omega_{h,\#}^m}\cdot I_h(u^m\circ X^m\circ\Phi^0) \epm (\nabla_{\Omega_{h,\#}^m} \cdot E_h\ehxm)
	\notag\\
	&=
	\int_{\Omega}
	{\rm tr}\Big((J_h^{-1})^\top \nabla_{\Omega} I_h(u^m\circ X^m\circ\Phi^0) \circ(\Phi^0)^{-1}\Big) \epm \circ (\Phi^0)^{-1} 
	\notag\\
	&\qquad
	\times
	{\rm tr}\Big( (J_h^{-1})^\top \nabla_{\Omega} (E_h\ehxm \circ (\Phi^0)^{-1})\Big)
	{\rm det}(J_h)
	\notag\\
	&=:
	\int_{\Omega}
	{\rm tr}\Big( (J^{-1})^\top \nabla_{\Omega} (u^m\circ X^m)\Big) 
	\epm \circ (\Phi^0)^{-1} 
	{\rm tr}\Big( (J^{-1})^\top \nabla_{\Omega} (E_h\ehxm \circ (\Phi^0)^{-1})\Big)
	{\rm det}(J)
	+
	L_4^m ,
	\notag
\end{align}
where the Jacobians are defined as $J := \nabla_\Omega X^m$ and $J_h := \nabla_\Omega (I_h(X^m\circ\Phi^0)\circ (\Phi^0)^{-1})$.
The term $L_4^m$ consists of errors coming from $J_h - J$, $J_h^{-1} - J^{-1}$, $(1-I_h)(\Phi^0)^{-1}$ and $(1-I_h)(u^m\circ X^m)$. First, by interpolation error estimates and the approximation property of $\Phi^0$, we have
\begin{align}
	\| J_h - J \|_{L^\infty(\Omega)} 
	+ 
	\| (1-I_h)(u^m\circ X^m) \|_{L^\infty(\Omega)} 
	+ 
	\| (1-I_h)(\Phi^0)^{-1} \|_{L^\infty(\Omega)} 
	\leq C (h^k + h^{k+1}), \notag
\end{align}
where $I_h$ should be interpreted as the lifted interpolation operator on $\Omega$ (cf. \cite[p. 12]{BL24}).
Second, note that
\begin{align}
	J_h^{-1}
	= \big(J + (J_h - J)\big)^{-1}
	=\big(I \underbrace{+ J^{-1}(J_h - J)}_{=:-e_J}\big)^{-1} J^{-1} = (I + e_J + e_J^2 + \cdots) J^{-1}. \notag
\end{align}
Consequently,
\begin{align}
	\| J_h^{-1} - J^{-1} \|_{L^\infty(\Omega)} \leq C \| e_J \|_{L^\infty(\Omega)} \leq C h^k. \notag
\end{align}
Putting these estimates together, we obtain
\begin{align}
	| L_4^m |
	&\lesssim
	h^k
	\| \nabla_\Ohsm E_h\ehxm \|_{L^2(\Ohsm)}
	\| \epm \|_{L^2(\Ohsm)} . \notag
\end{align}

Step 4:
Finally, for any smooth function $f$ in $\Omega$, by the definition of $E_h\ehxm$ (Eq. \eqref{eq:Eh}),
\begin{align}
	&\int_{\Omega}
	f \epm \circ (\Phi^0)^{-1} \nabla_{\Omega} (E_h\ehxm \circ (\Phi^0)^{-1})
	\notag\\
	&=\int_{\Omega}
	f \epm \circ (\Phi^0)^{-1} \nabla_{\Omega} (\eta E (\ehxm\circ (a^0|_\Ghso)^{-1}))
	\notag\\
	&\quad
	-
	\int_{\Omega}
	f \epm \circ (\Phi^0)^{-1} \nabla_{\Omega} \Big((1 - I_h^{\rm SZ}) \Big[\big(\eta E (\ehxm\circ (a^0|_\Ghso)^{-1})\big)\circ \Phi^0 \Big] \circ (\Phi^0)^{-1}\Big)
	\notag\\
	&=: L_5^m + L_6^m . \notag
\end{align}
Since $\eta E (\ehxm\circ (a^0|_\Ghso)^{-1})\in H_0^{3/2}(\Omega)$, H\"older's inequality and the mapping property of $E$ (cf. Section \ref{sec:Eh}) yield
\begin{align}
	| L_5^m |
	&\lesssim
	\| \ehxm \|_{H^{1/2}(\Ghsm)}
	\| \epm \|_{H^{-1/2}(\Ohsm)} . \notag
\end{align}
For $L_6^m$, we use the domain perturbation estimates \eqref{eq:Phi-approx4}--\eqref{eq:Phi-approx5}, interpolation error estimate and the inverse inequality to get
\begin{align}
	| L_6^m |
	&\lesssim
	h^k
	\| \ehxm \|_{H^{1/2}(\Ghsm)}
	\| \epm \|_{L^2(\Ohsm)} 
	+
	h^{1/2}
	\| \ehxm \|_{H^{1}(\Ghsm)}
	\| \epm \|_{L^2(\Ohsm)}
	. \notag
\end{align}

Summing up all estimates for $L_i^m, i=1,\cdots, 6$, and using the induction hypothesis \eqref{eq:ind-hypo} and the inverse inequality, we complete the proof of \eqref{eq:uhm-epm}.

\section{Proof of Lemma \ref{lemma:NT_stab_ref}}
\label{sec:appndix_tan_stab}

%	For the simplicity of notation, we denote the displacement $\delta X_h^m := X_h^{m+1} - X_h^{m}  - \tau I_h v^m \in S_h(\Ghm)$. 
Given two finite element functions $f_h,g_h\in S_h(\Ghsm)$, it holds that
% derive from Lemma \ref{lemma:geo-perturb} and \ref{lemma:e-blinear} that
%	From \cite[Eq. (4.50)]{BL24},
\begin{align}\label{eq:tan_stab1}
	&\Big| \int_{\Ghsm} \nabla_{\Ghsm} I_h \Nsm f_h \cdot  \nabla_{\Ghsm}  I_h \Tsm g_h \Big| 
	\notag\\ 
	&= \Big| \ \Big( \int_{\Ghsm} \nabla_{\Ghsm} I_h \Nsm f_h \cdot  \nabla_{\Ghsm}  I_h \Tsm g_h
	- \int_{\Gm} \nabla_{\Gm} (I_h \Nsm f_h)^{\ell} \cdot  \nabla_{\Gm}  (I_h \Tsm g_h)^{\ell} \Big)
	\notag\\ 
	&\quad+ \int_{\Gm} \nabla_{\Gm} (I_h \Nsm f_h)^{\ell} \cdot  \nabla_{\Gm}  (I_h \Tsm g_h)^{\ell} \Big|
	\notag\\ 
	&\lesssim h^{-d/2+1/2}(1 + \nusm) h^{k+1} \| \nabla_{\Ghsm} I_h \Nsm f_h \|_{L^2(\Ghsm)}  \|  \nabla_{\Ghsm}  I_h \Tsm g_h \|_{L^2(\Ghsm)} 
	\notag\\ 
	&\quad +\Big| \int_{\Gm} \nabla_{\Gm} (I_h \Nsm f_h)^{\ell} \cdot  \nabla_{\Gm} ( I_h \Tsm g_h)^{\ell} \Big|
%	\qquad\mbox{(Lemma \ref{lemma:geo-perturb} is used)}
	,
\end{align}
where we have used Lemma \ref{lemma:geo-perturb} in the last inequality.
Using the super-approximation (Lemma \ref{lemma:super_conv} with $(p,q,r)=(2,2,\infty)$ therein), the second term on the right-hand side can be treated as follows:
\begin{align}\label{eq:tan_stab212}
	&\Big| \int_{\Gm} \nabla_{\Gm} (I_h \Nsm f_h)^{\ell} \cdot  \nabla_{\Gm}  (I_h \Tsm  g_h)^{\ell} \Big|
	\notag\\ 
	&= \Big| \int_{\Gm} \nabla_{\Gm} (\Nsm f_h)^{\ell} \cdot  \nabla_{\Gm} (\Tsm g_h)^{\ell}
	%	\notag\\ 
	%	&\quad
	- \int_{\Gm} \nabla_{\Gm} ((1 - I_h)\Nsm f_h)^{\ell} \cdot  \nabla_{\Gm} (I_h \Tsm g_h)^{\ell}
	\notag\\ 
	&\quad- \int_{\Gm} \nabla_{\Gm} (I_h\Nsm f_h)^{\ell} \cdot  \nabla_{\Gm} ((1 - I_h) \Tsm g_h)^{\ell}
	\notag\\ 
	&\quad- \int_{\Gm} \nabla_{\Gm}  ((1 - I_h) \Nsm f_h)^{\ell} \cdot  \nabla_{\Gm} ( (1 - I_h) \Tsm  g_h)^{\ell} \Big|
	\notag\\ 
	&\lesssim \Big| \int_{\Gm} \nabla_{\Gm} (\Nsm f_h)^{\ell} \cdot  \nabla_{\Gm} (\Tsm g_h)^{\ell} \Big|
	%	\notag\\ 
	%	&\quad
	+ h \| f_h \|_{H^1(\Ghsm)}  \| \nabla_\Ghsm I_h \Tsm g_h \|_{L^2(\Ghsm)} \notag\\
	&\quad+ h \| \nabla_{\Ghsm} I_h\Nsm f_h  \|_{L^2(\Ghsm)} \| g_h \|_{H^1(\Ghsm)} 
	%	\notag\\
	%	&\quad
	+ h^2 \| f_h \|_{H^1(\Ghsm)} \|  g_h \|_{H^1(\Ghsm)}
	.
\end{align}
%where we have used the super-approximation estimates (cf. Lemma \ref{lemma:super_conv}): 
%\begin{align}
%	%		\label{I-Ih-bNh-bTh}
%	\begin{aligned}
	%		\| \nabla_{\Gm} ((1 - I_h)\Nsm f_h)^{\ell} \|_{L^2(\Gm)}  &\lesssim h \| N^m \|_{W^{1,\infty}(\Gm)} \| f_h \|_{H^1(\Ghsm)} ,\\
	%		\| \nabla_{\Gm} ( (1 - I_h) \Tsm g_h)^{\ell} \|_{L^2(\Gm)} & \lesssim h \| N^m \|_{W^{1,\infty}(\Gm)} \| g_h \|_{H^1(\Ghsm)} . \notag
	%	\end{aligned}
%\end{align}
%	(In our notation, $I_h \Tbhm \delta X_h^m=I_h [\Tbhm \delta X_h^m]$ is a finite element function and therefore satisfies the requirement of Lemma \ref{lemma:super_conv}.)
%The boundedness of $\|\Nsm\|_{W^{1,\infty}(\Ghsm)} $ and $\|\Tsm\|_{W^{1,\infty}(\Ghsm)} $ follows from the $W^{1,\infty}$ estimate of $\nbhm$ in \eqref{eq:n-bbd}. 

%For the first term on the right-hand side of \eqref{eq:tan_stab3}, we proceed as follows:
Furthermore, using the product rule (Item 1, Lemma \ref{lemma:ud}),
\begin{align*}
	& \Big| \int_{\Gm} \nabla_{\Gm} (N^m f_h^{\ell}) \cdot  \nabla_{\Gm} (T^m g_h^{\ell}) \Big|
	\notag\\ 
	&= \Big| \int_{\Gm} (\nabla_{\Gm} N^m) f_h^{\ell} \cdot  (\nabla_{\Gm} T^m) g_h^{\ell}
	%	\notag\\ 
	%	&\quad
	+ \int_{\Gm} N^m \nabla_{\Gm} f_h^{\ell} \cdot T^m \nabla_{\Gm} g_h^{\ell}
	\notag\\ 
	&\quad+ \int_{\Gm} (\nabla_{\Gm} N^m) f_h^{\ell} \cdot T^m  \nabla_{\Gm} g_h^{\ell}
	%	\notag\\ 
	%	&\quad
	+ \int_{\Gm} N^m \nabla_{\Gm} f_h^{\ell} \cdot  (\nabla_{\Gm} T^m) g_h^{\ell} \Big| ,
\end{align*}
where the second term on the right-hand side is zero due to the orthogonality between the projections $N^m$ and $T^m$. For the last term on the right-hand side, we can transfer the gradient on $f_h^{\ell}$ to $g_h^{\ell}$ via integration by parts. By symmetry, we can also shift the gradient on $g_h^{\ell}$ to $f_h^{\ell}$ in the second-to-last term. 
This leads to the following estimate:
\begin{align}\label{eq:tan_stab51}
	&\Big| \int_{\Gm} \nabla_{\Gm} (N^m f_h^{\ell}) \cdot  \nabla_{\Gm} (T^m g_h^{\ell}) \Big| \notag\\ 
	%	&\lesssim \| \Nsm f_h \|_{L^2(\Ghsm)}  \| \Tsm g_h \|_{H^1(\Ghsm)}
	%	\notag\\
	&\lesssim \min\Big\{\| f_h \|_{L^2(\Ghsm)}  \| g_h \|_{H^1(\Ghsm)},
	\| f_h \|_{H^1(\Ghsm)}  \| g_h \|_{L^2(\Ghsm)}
	\Big\} .
\end{align}
The proof is complete by combining \eqref{eq:tan_stab1}--\eqref{eq:tan_stab51}.

%Then Lemma \ref{lemma:NT_stab_ref} follows from combining \eqref{eq:tan_stab1}--\eqref{eq:tan_stab51} and the induction hypothesis \eqref{eq:ind_hypo1}.

\section{Proof of Lemma \ref{lemma:e-convert}}
\label{sec:e-convert}

For simplicity, we only prove that the conversion of $H^1$ semi-norm is stable. The proof for $L^2$ norm part is easier and therefore omitted.

To show that converting $\| \nabla_\Ghsm \exM \|_{L^2(\Ghsm)}^2$ to $\| \nabla_\GhsM \ehxM \|_{L^2(\GhsM)}^2$ is stable, we decompose their difference into the following four parts: 
\begin{align}
	%	\label{M1-5}
	&\| \nabla_\GhsM \ehxM \|_{L^2(\GhsM)}^2 
	- 
	\| \nabla_\Ghsm \exM \|_{L^2(\Ghsm)}^2
	\notag\\
	&= 
	\| \nabla_\GhsM \ehxM \|_{L^2(\GhsM)}^2  
	- 
	\| \nabla_\Ghsm \ehxM \|_{L^2(\Ghsm)}^2 
	\notag\\
	&\quad
	+ 
	\| \nabla_\Ghsm \ehxM \|_{L^2(\Ghsm)}^2 
	- 
	\| \nabla_\Ghsm I_h \Nsm \exM \|_{L^2(\Ghsm)}^2
	- 
	\| \nabla_\Ghsm I_h \Tsm \exM \|_{L^2(\Ghsm)}^2
	\notag\\
	&\quad
	-
	2 \int_\Ghsm \nabla_\Ghsm I_h\Nsm \exM \cdot \nabla_\Ghsm I_h\Tsm \exM
	\notag\\
	&\leq
	\| \nabla_\GhsM \ehxM \|_{L^2(\GhsM)}^2  
	- 
	\| \nabla_\Ghsm \ehxM \|_{L^2(\Ghsm)}^2 
	\notag\\
	&\quad
	+
	\| \nabla_\Ghsm I_h( \NsM\circ \hat X_{h,*}^{m+1} \exM) \|_{L^2(\Ghsm)}^2
	- 
	\| \nabla_\Ghsm I_h \Nsm \exM \|_{L^2(\Ghsm)}^2
	\notag\\
	&\quad
	+ 
	\| \nabla_\Ghsm \ehxM \|_{L^2(\Ghsm)}^2 
	- 
	\| \nabla_\Ghsm I_h (\NsM\circ \hat X_{h,*}^{m+1} \exM) \|_{L^2(\Ghsm)}^2
	\notag\\
	&\quad
	-
	2 \int_\Ghsm \nabla_\Ghsm I_h\Nsm \exM \cdot \nabla_\Ghsm I_h\Tsm \exM
	\notag\\
	&=: M_1^m + M_2^m + M_3^m + M_4^m .
	\notag
\end{align}
Using the fundamental theorem of calculus and the norm equivalence of $\Ghsm$ and $\hat\Gamma_{h,*}^{m+1}$ (cf. Lemma \ref{lemma:hatX-W1inf}), we derive
\begin{align}\label{eq:M1}
	M_1^m
	&=  
	\| \nabla_\GhsM \ehxM \|_{L^2(\GhsM)}^2  
	- 
	\| \nabla_\Ghsm \ehxM \|_{L^2(\Ghsm)}^2   \notag\\
	&\lesssim \| \nabla_\Ghsm (\hat X_{h,*}^{m+1} - \hat X_{h,*}^m) \|_{L^\infty(\Ghsm)} \| \nabla_\Ghsm \ehxM \|_{L^2(\Ghsm)}^2 
	%	\qquad\mbox{(Lemma \ref{lemma:e-blinear} is used)}
	\notag\\
	&\lesssim \tau \| \nabla_\Ghsm \ehxM \|_{L^2(\Ghsm)}^2
	+
	\tau \Big\| \frac{\exM - \ehxm}{\tau} \Big\|_{H^{1/2}(\Ghsm)} \| \nabla_\Ghsm \ehxM \|_{L^2(\Ghsm)}
	,
\end{align}
where we have applied Lemma \ref{lemma:hatX-W1inf} and \ref{lemma:e-diff-W1inf} in the last line. Analogous to $M_1^m$,
\begin{align}\label{eq:M2}
	M_2^m
	&\lesssim 
	\| \nabla_\Ghsm I_h(\NsM\circ \hat X_{h,*}^{m+1} - \Nsm \circ \hat X_{h,*}^{m}) \|_{L^\infty(\Ghsm)}
	\| \nabla_\Ghsm \ehxM \|_{L^2(\Ghsm)}^2
	\notag\\
	&\lesssim 
	\tau
	\| \nabla_\Ghsm \ehxM \|_{L^2(\Ghsm)}^2
	+
	\tau \Big\| \frac{\exM - \ehxm}{\tau} \Big\|_{H^{1/2}(\Ghsm)} \| \nabla_\Ghsm \ehxM \|_{L^2(\Ghsm)}
	.
\end{align}
Using the geometric relations \eqref{eq:geo_rel_1}--\eqref{eq:geo_rel_2}, we derive
\begin{align}\label{eq:M3}
	M_3^m 
	&= 
	\| \nabla_\Ghsm \ehxM \|_{L^2(\Ghsm)}^2 
	- 
	\| \nabla_\Ghsm I_h (\NsM\circ \hat X_{h,*}^{m+1} \exM) \|_{L^2(\Ghsm)}^2
	\notag\\
	&\lesssim
	\| \nabla_\Ghsm r_h^{m+1} \|_{L^2(\Ghsm)}
	(\| \nabla_\Ghsm \ehxM \|_{L^2(\Ghsm)}
	+
	\| \nabla_\Ghsm \exM \|_{L^2(\Ghsm)}
	)
	\quad\mbox{(using \eqref{eq:geo_rel_1})}
	\notag\\
	&\lesssim
	h^{-1}\| I_h(\TsM\circ \hat X_{h,*}^{m+1} \exM) \|_{L^4(\Ghsm)}^2
	(\| \nabla_\Ghsm \ehxM \|_{L^2(\Ghsm)}
	+
	\| \nabla_\Ghsm \exM \|_{L^2(\Ghsm)}
	)
	\notag\\
	&\hspace{270pt}\mbox{(using \eqref{eq:geo_rel_2})}
	\notag\\
	&\lesssim
	h^{-1}(\| I_h\Tsm(\exM - \ehxm) \|_{L^4(\Ghsm)}^2
	+
	\| I_h(\TsM\circ \hat X_{h,*}^{m+1}-\Tsm \circ \hat X_{h,*}^{m})\exM \|_{L^4(\Ghsm)}^2
	)
	\notag\\
	&\quad
	\times
	(\| \nabla_\Ghsm \ehxM \|_{L^2(\Ghsm)}
	+
	\| \nabla_\Ghsm \exM \|_{L^2(\Ghsm)}
	)
	\qquad\mbox{(by $I_h\Tsm\ehxm=0$)}
	\notag\\
	&\lesssim
	h^{-1}\tau^2
	\Big(\Big\| \frac{\exM - \ehxm}{\tau} \Big\|_{L^4(\Ghsm)}^2
	+
	\Big\| \frac{\exM - \ehxm}{\tau} \Big\|_{W^{1,\infty}(\Ghsm)}^2
	\| \exM \|_{L^4(\Ghsm)}^2
	\notag\\
	&\quad
	+
	\| \exM \|_{L^4(\Ghsm)}^2
	\Big)
	\times
	(\| \nabla_\Ghsm \ehxM \|_{L^2(\Ghsm)}
	+
	\| \nabla_\Ghsm \exM \|_{L^2(\Ghsm)}
	) 
	\notag\\
	&\hspace{210pt}
	\quad\mbox{(using Lemma \ref{lemma:hatX-W1inf})}
	,
\end{align}
which is a higher-order perturbation term according to \eqref{eq:ex-bbd}--\eqref{eq:ehx-bbd}.

Using $I_h\Tsm\ehxm=0$ and the orthogonality lemma (Lemma \ref{lemma:NT_stab_ref}), we get
\begin{align}\label{eq:M4}
	M_4^m
	&\lesssim
	\| \nabla_\Ghsm I_h\Nsm \exM \|_{L^2(\Ghsm)} \|  I_h \Tsm (\exM - \ehxm) \|_{L^2(\Ghsm)}
	\notag\\
	&\lesssim
	\tau \| \exM \|_{H^1(\Ghsm)}  \Big\| \frac{\exM - \ehxm}{\tau} \Big\|_{L^2(\Ghsm)}
	\notag\\
	&\lesssim
	\tau \Big(\| \ehxm \|_{H^1(\Ghsm)} +
	\tau \Big\| \frac{\exM - \ehxm}{\tau} \Big\|_{H^1(\Ghsm)} \Big) \Big\| \frac{\exM - \ehxm}{\tau} \Big\|_{L^2(\Ghsm)} .
\end{align}

%Obviously, $M_{33}^m$, $M_{34}^m$ and $M_{35}^m$ are leading order terms under the induction hypothesis, and by collecting the estimates of $M_{3j}^m$, $j=1,\dots,5$, we obtain the following estimate: 
%{\begin{align}\label{eq:M3}
		%		M_3^m 
		%		&\lesssim 
		%		\tau (\| \ehM \|_{L^2(\Ghsm)}^2 + \| \eM \|_{L^2(\Ghsm)}^2
		%		+
		%		\| \ehm \|_{L^2(\Ghsm)}^2
		%		)
		%		\notag\\
		%		&\quad
		%		+
		%		\big((1 + \kappa_{*,m}) h^k \tau + \tau^2 \big) (\| \ehM \|_{L^2(\Ghsm)} + \| \eM \|_{L^2(\Ghsm)}) 
		%		.
		%\end{align}}
		The proof is complete by combining \eqref{eq:M1}--\eqref{eq:M4} and using the induction hypothesis \eqref{eq:ind-hypo}.

%	\end{appendices}

%------
% Insert acknowledgments and information
% regarding funding at the end of the last
% section, i.e., right before the bibliography.
%------

\begin{ack}
The first author gratefully acknowledges support from the University of Michigan through Departmental Research Grant U043382, and also thanks Yupei Xie for helpful discussions. Some of the initial ideas of this work were formed while the first author was a postdoctoral fellow at The Hong Kong Polytechnic University, partially supported by the Hong Kong Research Grants Council under project no.~RFS2324-5S03.
\end{ack}

%\begin{funding}
%This work was partially supported by~\ldots
%\end{funding}

%------
% Insert the bibliography.
%------

\bibliographystyle{abbrv}
\bibliography{MCF_Vt_final}
%\begin{thebibliography}{99}
%
%%------ Example for a paper in journal:
%% \bibitem{article1}
%% A.~Petrunin, Parallel transportation for Alexandrov space with curvature bounded below.
%% \emph{Geom. Funct. Anal.} \textbf{8} (1998), no.~1, 123--148
%% \Zbl{0903.53045} \MR{1601854}
%
%%------ Example for a book:
%% \bibitem{book1}
%% W.~P. Ziemer, \emph{Weakly differentiable functions}.
%% Grad. Texts in Math. 120,  Springer, New York, 1989
%%\Zbl{0692.46022} \MR{1014685}
%
%%------ Example for a paper in a book:
%% \bibitem{incollection1}
%% J.~S. Milne, Introduction to Shimura varieties.
%% In \emph{Harmonic analysis, the trace formula, and Shimura varieties},
%% pp. 265--378, Clay Math. Proc. 4,
%% American Mathematical Society, Providence, RI, 2005
%% \Zbl{1148.14011} \MR{2192012}
%
%%------ Example for a preprint on arXiv:
%% \bibitem{preprint1}
%% D.~V. Nguyen, S.~K. Chilappagari, M.~W. Marcellin, and B.~Vasic,
%% LDPC codes from latin squares free of small trapping sets.
%% 2010, \arxiv{1008.4177}
%
%%------ Example for a report:
%% \bibitem{report1}
%% J.~Schöberl, Commuting quasi-interpolation operators.
%% Technical report isc-01-10-math, Texas A\&M University, 2001,
%% \url{www.isc.tamu.edu/publications-reports/tr/0110.pdf}
%
%%------ Example for a thesis:
%% \bibitem{thesis1}
%% E.~Giorgi, \emph{The geometric universe}.
%% Ph.D. thesis, University of Maryland, College Park, 2002
%
%\end{thebibliography}

\end{document}